\definecolor{lred}{rgb}{1,0.8,0.8}
\definecolor{lblue}{rgb}{0.8,0.8,1}
\definecolor{dred}{rgb}{0.6,0,0}
\definecolor{dblue}{rgb}{0,0,0.5}
\definecolor{violet}{rgb}{0.5804,0.0000,0.8275}
\definecolor{purple}{rgb}{0.2400,0.5700,0.2500}
\def\@themcountersep{}
\newtheorem{THEO}{Theorem}[section]
\newtheorem{ALGO}[THEO]{Algorithm}
\newtheorem{LEMM}[THEO]{Lemma}
\newtheorem{PROP}[THEO]{Proposition}
\newtheorem{REMA}[THEO]{Remark}
\def\bold#1{\mbox{\boldmath $#1$}}
\def\0{\mbox{\bf 0}}
\def\1{\mbox{\bf 1}}
\def\2{\mbox{\bf 2}}
\def\3{\mbox{\bf 3}}
\def\4{\mbox{\bf 4}}
\def\5{\mbox{\bf 5}}
\def\6{\mbox{\bf 6}}
\def\7{\mbox{\bf 7}}
\def\8{\mbox{\bf 8}}
\def\9{\mbox{\bf 9}}
\def\b{\mbox{\boldmath $b$}}
\newdimen\zhige \zhige=0pt
\def\chige#1{{\setbox\zhige\hbox{#1}\ifdim\ht\zhige=1ex\accent24 #1%
  \else\ooalign{\unhbox\zhige\crcr\hidewidth\char24\hidewidth}\fi}}
\def\s{\mbox{\boldmath $s$}}
\def\x{\mbox{\boldmath $x$}}
\def\y{\mbox{\boldmath $y$}}
\def\A{\mbox{\boldmath $A$}}
\def\C{\mbox{\boldmath $C$}}
\def\E{\mbox{\boldmath $E$}}
\def\I{\mbox{\boldmath $I$}}
\def\L{\mbox{\boldmath $L$}}
\def\O{\mbox{\boldmath $O$}}
\def\P{\mbox{\boldmath $P$}}
\def\S{\mbox{\boldmath $S$}}
\def\U{\mbox{\boldmath $U$}}
\def\V{\mbox{\boldmath $V$}}
\def\W{\mbox{\boldmath $W$}}
\def\X{\mbox{\boldmath $X$}}
\def\Y{\mbox{\boldmath $Y$}}
\def\Z{\mbox{\boldmath $Z$}}
\def\AC{\mbox{$\cal A$}}
\def\DC{\mbox{$\cal D$}}
\def\FC{\mbox{$\cal F$}}
\def\LC{\mbox{$\cal L$}}
\def\NC{\mbox{$\cal N$}}
\def\PC{\mbox{$\cal P$}}
\def\WC{\mbox{$\cal W$}}
\def\Real{\mbox{$\mathbb{R}$}}
\def\SMAT{\mbox{$\mathbb{S}$}}
\def\sX{\mbox{\scriptsize $\X$}}
\def\sZ{\mbox{\scriptsize $\Z$}}
\def\sW{\mbox{\scriptsize $\W$}}
\def\sWC{\mbox{\scriptsize $\WC$}}
\title{A dual spectral projected gradient method
for  log-determinant semidefinite problems}
\author{ 
Takashi Nakagaki\thanks{Department of  Mathematical and Computing Science,
        Tokyo Institute of Technology, 2-12-1-W8-41 Oh-Okayama, Meguro-ku, Tokyo 152-8552, Japan. }
\and 
\normalsize
        Mituhiro Fukuda\thanks{Department of  Mathematical and Computing Science,
        Tokyo Institute of Technology, 2-12-1-W8-41 Oh-Okayama, Meguro-ku, Tokyo 152-8552, Japan
         ({\tt mituhiro@is.titech.ac.jp}). 
	The research was partially supported by JSPS KAKENHI (Grant number: 26330024), 
        and by the Research Institute for Mathematical Sciences, a 
	Joint Usage/Research Center located in Kyoto University. } 
\and 
\normalsize 
	Sunyoung Kim\thanks{Department of Mathematics, Ewha W. University, 52 Ewhayeodae-gil, Sudaemoon-gu, 
	Seoul 	03760, Korea  ({\tt skim@ewha.ac.kr}). The research was supported
        by  NRF 2017-R1A2B2005119.}
 \and 	
\normalsize
        Makoto Yamashita\thanks{Department of  Mathematical and Computing Science,
            Tokyo Institute of Technology, 2-12-1-W8-29 Oh-Okayama, Meguro-ku, Tokyo 152-8552, Japan        
             ({\tt makoto.yamashita@is.titech.ac.jp}).
	This research was partially supported by JSPS KAKENHI (Grant number: 18K11176).
 	}	
}
\date{November, 2018}
\begin{document}

\maketitle

\begin{abstract} \noindent
We extend the result on the spectral projected gradient method by Birgin {\it et al.} in 2000 to  a log-determinant semidefinite problem (SDP) with linear constraints
	and propose a spectral projected gradient method for the dual problem. Our method is
	based on alternate projections on the intersection of two convex sets, which first projects   
	onto the box constraints and then onto a set defined by a linear matrix inequality.
By exploiting structures of the two projections, we show the same convergence properties can be obtained for the proposed method as  Birgin's method where	
the exact orthogonal projection onto the intersection of two convex sets is performed.
Using the convergence properties, we prove that the proposed algorithm attains the optimal value or
terminates in a finite number of iterations.
The efficiency of the proposed method is illustrated with the numerical results   
on  randomly generated synthetic/deterministic data and gene expression data, in comparison with
other methods including
 the inexact primal-dual path-following
interior-point method, 
the adaptive spectral projected gradient method, and 
the adaptive
Nesterov's smooth method. 
For the gene expression data, our results are compared with  the quadratic approximation for sparse inverse
covariance estimation method. 
We show that our method outperforms the other methods in obtaining a better optimal value fast.

\end{abstract}

\noindent
{\bf Key words. } 
Dual spectral projected gradient methods, log-determinant semidefinite programs with linear constraints, dual problem, theoretical convergence results,
computational efficiency.
 
\vspace{0.5cm}

\noindent
{\bf AMS Classification. } 
90C20,  	%Quadratic programming
90C22,  	%Semidefinite programming
90C25, 	%Convex programming
90C26.  	%Nonconvex programming, global optimization

\section{Introduction}\label{sec:introduction}

We consider a convex semidefinite program with linear constraints of the form:
\begin{eqnarray*}
(\PC) & & \begin{array}{rcl}
  \min &:& f(\X) := \text{Tr}(\C \X) - \mu \log \det \X + \text{Tr}(\bold{\rho} |\X|) \\
        \mbox{s.t.}  & : & \AC(\X) = \b, \X \succ \O,
      \end{array}
\end{eqnarray*}
where  $\C$, $\X$ and  $\bold{\rho}$ are $n\times n$ symmetric matrices  $\SMAT^n$, the elements of $\bold{\rho} \in \SMAT^n$ are nonnegative,
$\text{Tr}$ denotes the trace of a matrix,
$|\X| \in \SMAT^n$  the matrix obtained by taking the absolute value of every element $X_{ij} \ (1 \leq i,j \leq n)$
of $\X$,   $\X \succ \O$ means that $\X$ is positive definite, and $\AC$  a linear map of $\SMAT^n \to \Real^m$.
 In $(\PC)$,
$\C, \bold{\rho} \in \SMAT^n,
\mu > 0, \b \in \Real^m$, 
and the linear map $\AC$ 
given by
$\AC(\X) = (\text{Tr}(\A_1 \X), \ldots, \text{Tr}(\A_m \X))^T$, where $\A_1,\ldots,\A_m \in \SMAT^n$, are input data.

Problem $(\PC)$ frequently 
appears in statistical models such as sparse covariance selection or Gaussian graphical models.
In particular,
the sparse covariance selection model \cite{DEMPSTER72} or its graphical
interpretation known as Gaussian Graphical Model (GGM) \cite{LAURITZEN96} are
special cases of $(\PC)$ for $\bold{\rho}=\O$ and  
linear constraints taking the form $X_{ij}=0$ for $(i,j)\in\Omega
\subseteq\{(i,j) \ | \ 1\leq i<j\leq n\}$.

Many approximate solution methods for solving variants of $(\PC)$ have been proposed over the years.
The methods mentioned below are mainly from  recent computational developments. 
The adaptive spectral gradient (ASPG) method and the adaptive Nesterov's
smooth (ANS) method proposed by Lu~\cite{LU10} are one of the earlier
methods which can handle large-scale problems. 
Ueno and Tsuchiya~\cite{UENO09} proposed a Newton method  by localized approximation of the relevant data. 
Wang {\it et al.} \cite{WANG10} considered a primal proximal point algorithm which solves
semismooth subproblems by the Newton-CG iterates.
Employing  the inexact primal-dual
path-following interior-point method,
  Li and Toh in~\cite{LI10} demonstrated that the computational efficiency could be increased, despite the known inefficiency of
interior-point methods for solving large-sized problems. 
Yuan~\cite{YUAN2012} also proposed an improved Alternating Direction Method (ADM) to solve the sparse covariance problem by introducing an ADM-oriented reformulation. 
For a more general structured models/problems, Yang {\it et al.}~\cite{YANG13}
enhanced the method in~\cite{WANG10} to handle block structured sparsity, 
employing an inexact generalized Newton method to solve the dual semismooth subproblem. They
demonstrated that regularization 
using $\|\cdot\|_2$ or $\|\cdot\|_{\infty}$ norms instead of $\|\cdot\|_1$ in
$(\PC)$ are more suitable for the structured models/problems.
Wang~\cite{WANG16} first generated an initial point using the proximal augmented Lagrangian method, then applied the Newton-CG augmented Lagrangian method to problems with an additional convex quadratic term in $(\PC)$.
Li and Xiao \cite{LI18} employed the symmetric Gauss-Seidel-type ADMM in the 
same framework of~\cite{WANG10}. 
A more recent work by Zhang {\it et al.}~\cite{ZHANG18} shows that 
 $(\PC)$ with simple constraints as 
$X_{ij}=0$ for $(i,j)\in\Omega$ can be converted into a more computationally tractable
 problem  for large values of $\bold{\rho}$.
Among the methods mentioned here, only the methods discussed in 
\cite{WANG10,YANG13,WANG16} can handle problems as general as $(\PC)$.

We propose a dual-type spectral projected gradient (SPG)
method to obtain the optimal value of
$(\PC)$. More precisely, 
 an efficient algorithm is designed for the dual problem with $g:$ $\Real^m \times \SMAT^n \to \Real$: 
 \begin{eqnarray*}
(\DC) & & \begin{array}{rcl}
  \max &:& g(\y, \W) := \b^T \y + \mu \log \det (\C + \W - \AC^T (\y))
  + n \mu - n \mu \log \mu \\
        \mbox{s.t.}  & : & |\W| \le \bold{\rho},
        \C + \W - \AC^T(\y) \succ \O,  
      \end{array}
\end{eqnarray*}
under
the  three assumptions: 
(i) $\AC$ is surjective, that is, the set of $\A_1,\ldots,\A_m$ is linearly
independent; 
(ii) The problem $(\PC)$ has an interior feasible point, {\it{i.e.}}, 
there exists $\X \succ \O$ such that $\AC(\X) = \b$; 
(iii) A feasible point for $(\DC)$ is given or can be easily computed. %by a cheap computation method.
{\it{i.e.}}, 
there exists
$\y \in\Real^m$ and $\W \in\SMAT^n$
such that $| \W |\leq \bold{\rho}$ and $\C+ \W +\AC^T(\y)\succ \O$.
These assumptions are not strong as many applications 
satisfy these assumptions with slight modifications.

Our approach for solving $(\DC)$ by a projected gradient method
is not the first one.  A dual approach was examined  in~\cite{DUCHI08}, however, 
their algorithm which employs the classical gradient  projection method 
cannot handle  linear constraints.  

The spectral projection gradient (SPG) method
by Birgin {\it et al.}~\cite{BIRGIN00}, which is slightly modified 
 in our method, 
minimizes a smooth objective function over a closed convex set.
Each iteration of the SPG requires (a) projection(s) onto the feasible closed convex set and performs
a non-monotone line search for the Barzilai-Borwein step size~\cite{BARZILAI88}.
An important advantage of the SPG method is that it requires only 
the information of  function values and  first-order derivatives, therefore, the  computational 
cost of each iteration is much less than methods 
which employ second-order derivatives such as interior-point methods.
The ASPG method~\cite{LU10} described above
repeatedly applies the SPG method by decreasing
$\bold{\rho}$ adaptively, but the ASPG method was designed for the
only specific constraint $X_{ij} = 0 \ \mbox{for} \  (i,j) \in \Omega$.
We extend these results to directly handle 
a more general linear constraint $\AC(\X) = \b$.

Our proposed algorithm called Dual SPG, which is a dual-type SPG,  adapts the SPG methods of 
\cite{BIRGIN00} to
$(\DC)$. A crucial difference between our method and the original method is that
 the Dual SPG first performs  an orthogonal projection onto the 
box constraints and subsequently onto the set defined by an LMI, while the original method
computes the exact
orthogonal projection of the search direction over the intersection of the 
two convex sets.
The projection onto the intersection of the two sets requires some iterative methods, 
which frequently causes  some numerical difficulties. Moreover,
the projection by an iterative method is usually inexact, resulting in the search direction that may not be an ascent direction.  
We note that  an ascent direction is necessary for the convergence analysis as shown in Lemma  \ref{lemm:level} in Section 3.
On the other hand, the projections onto the box constraints and the LMI constraints can be exactly computed 
 within numerical errors.

The convergence analysis for the Dual SPG (Algorithm~\ref{algo:dspg}) presented in Section \ref{section:convergence}
shows that %even employing 
such approximate
orthogonal projections do not affect convergence, in fact, the convergence properties of the original SPG also hold
 for the Dual SPG. For instance, % we can prove
stopping criteria based on the fixed point of the projection (Lemma~\ref{lemm:stop})
and other properties described in the beginning of Section~\ref{section:convergence} can be proved for the Dual SPG.
The properties are used to  finally
prove that the algorithm either terminates in a finite number of iterations or
successfully attains the optimal value.

We should emphasize that the proof for the original SPG developed in \cite{BIRGIN00}
cannot be applied to the  Dual SPG proposed here. 
As the Dual SPG utilizes the two different projections instead of the orthogonal projection onto the feasible region
in the original SPG, 
a new proof  is necessary, in particular, for Lemma~\ref{lemm:stop} where the properties of  the two projections
are exploited.
We  also use the duality theorem
to prove the convergence of a sub-sequence (Lemma~\ref{lemm:liminf-obj})
since the Dual SPG solves the dual problem.
%A simple application of the proof in \cite{BIRGIN00} cannot 
 Lemma~\ref{lemm:liminf-obj} cannot be obtained by simply applying  the proof in \cite{BIRGIN00}.

The implementation of Algorithm~\ref{algo:dspg}, called DSPG in this paper, were run on three
classes of problems: Randomly generated synthetic data (Section~\ref{subsec:random}), deterministic synthetic data
(Section~\ref{subsec:deterministic}), and gene expression data (Section~\ref{subsec:realdata}; with no constraints) from the literature.
Comparison of the DSPG against high-performance code such as ASPG~\cite{LU10}, ANS~\cite{LU10}, 
 and IIPM~\cite{LI10} shows that our code can be superior or at least competitive with them
in terms of computational time when high accuracy is required.
In particular, against QUIC~\cite{HSIEH14}, the DSPG can be faster for denser instances.

This paper is organized as follows:
We proposed our method DSPG in Section~2.  Section 3 is mainly devoted to the convergence of the proposed method. 
Section 4 presents computational results of the proposed method in comparison with other methods.
For the gene expression data, our results are compared with QUIC.
We finally conclude in Section 5.

\subsection{Notation}\label{sec:notation}

We use $||\y|| := \sqrt{\y^T \y}$ for $\y \in \Real^m$
and $||\W|| := \sqrt{\W \bullet \W}$ for $\W \in \SMAT^n$ where
$\W \bullet \V = \text{Tr}(\W\V) = \sum_{i=1}^n \sum_{j=1}^n W_{ij} V_{ij}$ for $\V \in \SMAT^n$,
as the norm of vectors and matrices, respectively.
We extend the inner-product to the space of $\Real^m \times \SMAT^n$
by
$(\y_1, \W_1) \bullet (\y_2, \W_2) :=
\y_1^T  \y_2 + \W_1 \bullet \W_2$
for 
$(\y_1, \W_1), (\y_2, \W_2) \in \Real^m \times \SMAT^n$.
The norm of linear maps is  defined by
$||\AC|| := \max_{||\X||=1} ||\AC(\X)||$.

The superscript of $T$ indicates the transpose of vectors or matrices, or
the adjoint of linear operators. For example, the
adjoint of $\AC$ is denoted by $\AC^T : \Real^m \to \SMAT^n$.
% $\AC^T(\y) = \sum_{k=1}^m \A_k y_k$.
The notation $\X \succeq \Y (\X \succ \Y)$ stands for $\X - \Y$
being a positive semidefinite matrix (a positive definite matrix, respectively).
% We use the convetion $\log 0 = \infty$. Hence, the constraint 
% $\X \succeq \O$ is equivalent to $\X \succ \O$ in $(\PC)$.
We also use $\X \ge \Y$ to describe that $\X-\Y$ is a non-negative matrix,
that is, $X_{ij} \ge Y_{ij}$ for all $i,j = 1,\ldots, n$.

The induced norm for $\Real^m \times \SMAT^n$ is given by
$||(\y, \W)|| := \sqrt{(\y, \W) \bullet (\y, \W)}$.
To evaluate the accuracy of the solution,
we also use an element-wise infinity norm defined by
\[ ||(\y, \W)||_{\infty} :=
\max\{\max_{i=1,\ldots,m} |y_i|, \max_{i,j=1,\ldots,n} |W_{ij}|\}. \]

For a matrix $\W \in \SMAT^n$,
$[\W]_{\le \bold{\rho}}$ is the matrix
whose
$(i,j)$th element is 
$\min\{ \max\{ W_{ij}, -\rho_{ij} \}, \rho_{ij}\}$.
The set of such matrices is denoted by
$\WC := \{[\W]_{\le \bold{\rho}} : \W \in \SMAT^n\}$.
In addition, $\P_S$ denotes the projection onto a closed convex set
$S$;
\begin{eqnarray*}
\P_S (\x) = \mbox{arg}\min_{\y \in S} || \y - \x||.
\end{eqnarray*}

We denote an optimal solution of $(\PC)$ and $(\DC)$ by $\X^*$ and 
$(\y^*, \W^*)$, respectively.
For simplicity, we use $\X(\y,\W) := \mu (\C + \W - \AC^T(\y))^{-1}$. 
The gradient of $g$ is a map of $\Real^m \times \SMAT^n \to \Real^m \times \SMAT^n$ given by
\begin{eqnarray*}
\nabla g(\y,\W) &:=& (\nabla_{\y} g(\y, \W),  \nabla_{\W} g(\y, \W)) \\
&=&(\b - \mu \AC((\C + \W - \AC^T(\y))^{-1}), \mu (\C + \W - \AC^T(\y))^{-1}) \\
&=&(\b - \AC(\X(\y,\W)), \X(\y,\W))
\end{eqnarray*}
We use $\FC$ and $\FC^*$ to denote the feasible set and 
the set of optimal solutions of $(\DC)$, respectively;
\begin{eqnarray*}
\FC &:=& \{(\y, \W) \in \Real^m \times \SMAT^n :
\W \in \WC, \C + \W - \AC^T(\y) \succ \O \}\\
\FC^* &:=& \{(\y^*, \W^*) \in \Real^m \times \SMAT^n :  g(\y^*, \W^*) \ge g(\y, \W)
  \ \mbox{for} \ (\y, \W) \in \FC \}.
\end{eqnarray*}
Finally,  $f^*$ and $g^*$ are used to denote the optimal values of
$(\PC)$ and $(\DC)$, respectively.

\section{Spectral Projected Gradient Method for the Dual Problem}\label{Dual-SPG}

To propose a numerically efficient method, we  focus on the fact that
the feasible region of $(\DC)$ is the intersection of  two convex sets: 
$\FC = \widehat{\WC} \cap \widehat{\FC}$ where
\begin{eqnarray*}
\widehat{\WC} &:=&  \Real^m \times \WC \\
\widehat{\FC} &:=& \{(\y, \W) \in \Real^m \times \SMAT^n :
\C + \W - \AC^T(\y) \succ \O \}.
\end{eqnarray*}
Although the projection onto this 
 intersection requires elaborated computation, 
the projection onto the first set can be 
simply obtained by
\begin{eqnarray}
\P_{\widehat{\WC}} (\y, \W) = (\y, [\W]_{\le \bold{\rho}}).
\label{eq:PW}
\end{eqnarray}
Next, we  consider the second set $\widehat{\FC}$.
If the $k$th iterate $(\y^k, \W^k)$  satisfies
$\C + \W^k - \AC^T (\y^k) \succ \O$ and the direction toward the next iterate
$(\y^{k+1}, \W^{k+1})$ is given by $(\Delta \y^k, \Delta \W^k)$, then
the step length $\lambda$ can be computed such that
$(\y^{k+1}, \W^{k+1}) :=  (\y^k, \W^k) + \lambda (\Delta \y^k, \Delta \W^k)$ satisfies 
$\C + \W^{k+1} - \AC^T (\y^{k+1}) \succ \O$ using a similar procedure to
interior-point methods.
(See \ref{eq:overlabmda} below.) By the assumption (iii), 
we can start from some initial point $(\y^0, \W^0) \in \FC = \widehat{\WC} \cap \widehat{\FC}$ and 
it is easy to keep all the iterations inside the intersection
$\FC$. 

We 
now propose Algorithm~\ref{algo:dspg}
for solving the dual problem $(\DC)$.
The notation $\X^k := \X(\y^k, \W^k) = \mu (\C + \W^k +\AC^T(\y^k))^{-1}$ is used. %here,}
\begin{ALGO} \label{algo:dspg}
(Dual Spectral Projected Gradient Method) \rm
\begin{itemize}
\item[Step 0:] 
          Set parameters
          $\epsilon \ge 0, \ \gamma \in (0,1), \ \tau \in (0,1),\ 
          0 < \sigma_1 < \sigma_2 < 1,\ 
          0 < \alpha_{\min} < \alpha_{\max} < \infty$ and
          an integer parameter $M \ge 1$.
          Take the initial point $(\y^0, \W^0) \in \FC$ and
          an initial projection length
          $\alpha^0 \in [\alpha_{\min}, \alpha_{\max}]$.
          Set an iteration number $k:=0$.
\item[Step 1:]
          Compute a search direction (a projected gradient
          direction) for the stopping criterion
          \begin{eqnarray}
          (\Delta \y^k_{(1)}, \Delta \W^k_{(1)}) &:=&
        \P_{\widehat{\WC}}
          ((\y^k, \W^k) + \nabla g(\y^k, \W^k))
          - (\y^k, \W^k) \nonumber \\
          &=& (\b - \AC(\X^k),
          [\W^k + \X^k]_{\le \bold{\rho}} - \W^k).
          \end{eqnarray}
%\item[Step 2:]
          If $||(\Delta \y^k_{(1)}, \Delta \W^k_{(1)})||_{\infty} \le \epsilon$,
          stop and output $(\y^k, \W^k)$ as the approximate solution.
\item[Step 2:]
Compute a search direction (a projected gradient
          direction) 
          \begin{eqnarray}
          (\Delta \y^k, \Delta \W^k) &:=&
        \P_{\widehat{\WC}}
          ((\y^k, \W^k) + \alpha^k \nabla g(\y^k, \W^k))
          - (\y^k, \W^k) \nonumber \\
          &=& (\alpha^k(\b - \AC(\X^k)),
          [\W^k + \alpha^k \X^k]_{\le \bold{\rho}} - \W^k).
          \label{eq:search-direction}
          \end{eqnarray}
\item[Step 3:]
          Apply the Cholesky factorization to obtain a lower triangular matrix $\L$
        such that 
          $\C + \W^k - \AC^T (\y^k) = \L \L^T$.
          Let $\theta$ be the minimum eigenvalue of
          $\L^{-1} (\Delta \W^k - \AC^T (\Delta \y^k)) \L^{-T}$.
          Then,  compute
          \begin{eqnarray}
          \overline{\lambda}^k :=
              \left\{\begin{array}{ll}
              1 & (\theta \ge 0) \\
              \min\left\{1, -\frac{1}{\theta} \times \tau \right\}
          & (\theta < 0) \\
                    \end{array}\right. \label{eq:overlabmda}
          \end{eqnarray}
          and set $\lambda_1^k := \overline{\lambda}^k$.
          Set an internal iteration number $j:=1$.
          \begin{itemize}
          \item[Step 3a:]
        Set $(\y_+, \W_+) := (\y^k, \W^k)
                + \lambda_j^k (\Delta \y^k,\Delta \W^k)$.
          \item[Step 3b:]
        If \begin{eqnarray}
          g(\y_+, \W_+)
                \ge \min_{0 \le h \le \min\{k, M-1\}}
                g(\y^{k-h}, \W^{k-h})
                + \gamma \lambda_j^k
                \nabla g(\y^k, \W^k)
                \bullet (\Delta \y^k, \Delta \W^k)
                \label{eq:g-condition}
                \end{eqnarray}
                is satisfied,
                then go to Step 4.
                Otherwise, choose $\lambda_{j+1}^k \in
                [\sigma_1 \lambda_j^k, \sigma_2 \lambda_j^k]$,
                and set $j:= j+1$, and 
                return to Step 3a.
        \end{itemize}
\item[Step 4:]
          Set $\lambda^k := \lambda_j^k$,
          $(\y^{k+1}, \W^{k+1}) := (\y^k, \W^k)
          + \lambda^k (\Delta \y^k, \Delta \W^k)$, 
          $(\s_1, \S_1) := (\y^{k+1}, \W^{k+1}) - (\y^k, \W^k)$
          and
          $(\s_2, \S_2) :=
          \nabla g(\y^{k+1}, \W^{k+1})
          - \nabla g(\y^k, \W^k)$.
          Let $b^k := (\s_1, \S_1) \bullet (\s_2, \S_2)$.
          If $b^k \ge 0$, set
          $\alpha^{k+1} := \alpha_{\max}$.
          Otherwise, let $a^k := (\s_1, \S_1) \bullet (\s_1, \S_1)$ and set
          $\alpha^{k+1} := \min \{\alpha_{\max},
          \max\{\alpha_{\min}, -a^k / b^k\}\}$.
\item[Step 5:]
          Increase the iteration counter $k:= k+1$ and return to Step 1.
\end{itemize}
\end{ALGO}

The projection length $\alpha^{k+1} \in [\alpha_{\min}, \alpha_{\max}]$ 
in Step 4 is based on the Barzilai-Borwein step~\cite{BARZILAI88}.
As investigated in \cite{HAGER08, TAVAKOLI12}, this step
has several advantages. For example,
a linear convergence can be proven for unconstrained optimization
problems without employing line search techniques on the conditions that
its initial point is close to a local minimum and
the Hessian matrix of the objective function is positive definite.
%\modify{The value of $\lambda^k_{j+1}$ in Step 3b can be estimated for
%instance using a quadratic interpolation of the right-hand side of (\ref{eq:overlabmda}).}{}

\section{Convergence Analysis}\label{section:convergence}

%The main theorem of this paper (Theorem~\ref{theo:limit-obj}) will prove  
We prove in Theorem~\ref{theo:limit-obj},  one of our main contributions, that
Algorithm~\ref{algo:dspg} with $\epsilon = 0$
generates a point of $\FC^*$ in a finite number of
iterations or it generates a sequence $\{(\y^k, \W^k)\} \subset \FC $
that attains $\lim_{k \to \infty} g(\y^k, \W^k) = g^*$.

%\blue{
For the proof of Theorem~\ref{theo:limit-obj}, we present lemmas: Lemma \ref{lemm:level}
shows that the sequences $\{(\y^k, \W^k)\}$ by Algorithm~\ref{algo:dspg} remain in a level set of $g$ for each $k$.
Lemma \ref{lemm:bound} discusses on the boundedness of the level set,
Lemma~\ref{lemm:optset} on the uniqueness of 
the optimal solution in $(\PC)$, % and a property of the optimal solutions in  $(\DC)$.
Lemma  \ref{lemm:stop} on the validity of the stopping criteria in Algorithm~\ref{algo:dspg}, 
Lemma \ref{lemm:direction1} on the bounds for the search direction $(\Delta \y^k, \Delta \W^k)$.
Lemmas~\ref{lemm:low-min} and \ref{lemm:liminf-obj}, which use Lemma~\ref{lemm:matrix-norm} in their proofs,
 show that Algorithm~\ref{algo:dspg} does not terminate before computing an approximate solution. 
 Lemma \ref{lemm:low-min} provides a lower bound for the step length $\lambda^k$ of Algorithm~\ref{algo:dspg}.
Lemmas \ref{lemm:liminf-delta}  and \ref{lemm:liminf-obj}, which uses Lemma~\ref{lemm:short},  discuss
the termination of
Algorithm~\ref{algo:dspg} with $\epsilon = 0$  in a finite number of iterations attaining the
optimal value $g^*$ or Algorithm~\ref{algo:dspg} attains 
$\liminf_{k \to \infty} g(\y^k, \W^k) = g^*$.
%}

In  the proof of Theorem~\ref{theo:limit-obj}, %we will repeatedly use 
the properties of projection will be repeatedly used. The representative properties
are summarized  in Proposition 2.1 of \cite{HAGER08}.
We list some of the properties related to this paper in the following and %  readability.
their proofs can also be found in \cite{HAGER08} and the references therein.
\begin{PROP}\label{prop:hager} \rm (\cite{HAGER08})
For a convex set $S \subset \Real^n$
and a function $f : \Real^n \to \Real$,
\begin{enumerate}
  \item[(P1)] $(\x - \P_S (\x))^T (\y - \P_S(\x)) \le 0
        \quad \mbox{for} \quad
        \forall \x \in \Real^n, \ \forall \y \in S.$
  \item[(P2)] $(\P_S(\x) - \P_S(\y))^T (\x - \y)
          \ge ||(\P_S(\x) - \P_S(\y)||^2 
        \quad \mbox{for} \quad
        \forall \x, \forall \y \in \Real^n.$
  \item[(P3)] $||\P_S(\x) - \P_S(\y)|| 
          \le ||\x - \y||
        \quad \mbox{for} \quad
        \forall \x, \forall \y \in \Real^n.$
  \item[(P4)] $||\P_S(\x - \alpha \nabla f(\x)) - \x ||$
          is non-decreasing in $\alpha > 0$ for
          $\forall \x \in S$.
  \item[(P5)] $||\P_S(\x - \alpha \nabla f(\x)) - \x || / \alpha$
          is non-increasing in $\alpha > 0$ for
          $\forall \x \in S$.
\end{enumerate}
\end{PROP}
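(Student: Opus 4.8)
The plan is to treat this as a sequence of standard facts about the Euclidean projection onto a closed convex set, each derived from the previous ones, with (P1) as the foundation. To prove (P1), I would use that $\P_S(\x)$ is by definition the minimizer of $\phi(\z) := \|\z - \x\|^2$ over $\z \in S$. Fixing $\y \in S$ and using convexity of $S$, the segment $\P_S(\x) + t(\y - \P_S(\x))$ lies in $S$ for all $t \in [0,1]$, so the scalar function $t \mapsto \phi(\P_S(\x) + t(\y - \P_S(\x)))$ attains its minimum over $[0,1]$ at $t = 0$; since this function is a quadratic in $t$, its right derivative at $0$ must be nonnegative, which is exactly $(\P_S(\x) - \x)^T(\y - \P_S(\x)) \ge 0$, i.e. (P1).

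For (P2), the idea is to apply (P1) twice: once at the point $\x$ with test vector $\P_S(\y) \in S$, and once at the point $\y$ with test vector $\P_S(\x) \in S$. Adding the two resulting inequalities and regrouping cancels the cross terms and leaves $(\P_S(\x) - \P_S(\y))^T(\x - \y) \ge \|\P_S(\x) - \P_S(\y)\|^2$, which is (P2). Property (P3) is then immediate: combining (P2) with Cauchy--Schwarz gives $\|\P_S(\x) - \P_S(\y)\|^2 \le (\x - \y)^T(\P_S(\x) - \P_S(\y)) \le \|\x - \y\|\,\|\P_S(\x) - \P_S(\y)\|$, and dividing by $\|\P_S(\x) - \P_S(\y)\|$ (the zero case being trivial) yields nonexpansiveness.

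The last two properties are the substantive part, and I expect them to be the main obstacle. Here $\nabla f(\x)$ plays no structural role beyond being a fixed vector $\g$, so both are statements about the residual $\d(\alpha) := \P_S(\x - \alpha \g) - \x$ as $\alpha$ ranges over $(0,\infty)$ with $\x \in S$ fixed. Fix $0 < \alpha_1 < \alpha_2$, write $\d_i := \d(\alpha_i)$, and invoke (P1) at the point $\x - \alpha_1 \g$ with test vector $\P_S(\x - \alpha_2 \g) \in S$, and again at $\x - \alpha_2 \g$ with test vector $\P_S(\x - \alpha_1 \g) \in S$. Substituting $\P_S(\x - \alpha_i \g) = \x + \d_i$ and setting $\bold{\delta} := \d_2 - \d_1$, these two inequalities become $(\d_1 + \alpha_1 \g)^T \bold{\delta} \ge 0$ and $(\d_2 + \alpha_2 \g)^T \bold{\delta} \le 0$.

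From these I would extract, first, that $\g^T \bold{\delta} \le 0$ (subtract the two inequalities and use $\alpha_1 < \alpha_2$ together with $(\d_1 - \d_2)^T \bold{\delta} = -\|\bold{\delta}\|^2$), and hence $\d_1^T \bold{\delta} \ge -\alpha_1 \g^T \bold{\delta} \ge 0$. This last inequality is $\d_1^T \d_2 \ge \|\d_1\|^2$, and Cauchy--Schwarz then forces $\|\d_2\| \ge \|\d_1\|$, which is (P4). For (P5) the plan is to divide the two bracketed inequalities by $\alpha_1$ and $\alpha_2$ respectively before subtracting; writing $\u := \d_1/\alpha_1$ and $\v := \d_2/\alpha_2$ and using $\bold{\delta} = \alpha_2 \v - \alpha_1 \u$, this produces $(\alpha_1 + \alpha_2)\u^T \v \ge \alpha_1 \|\u\|^2 + \alpha_2 \|\v\|^2$. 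Bounding $\u^T \v \le \|\u\|\,\|\v\|$ and rearranging, the quadratic inequality factors as $(\|\u\| - \|\v\|)(\alpha_1 \|\u\| - \alpha_2 \|\v\|) \le 0$; since (P4) already gives $\alpha_1 \|\u\| \le \alpha_2 \|\v\|$, the second factor is nonpositive, forcing $\|\u\| \ge \|\v\|$, which is exactly (P5). The only points needing care are the degenerate cases where a residual vanishes and the boundary case $\alpha_1 \|\u\| = \alpha_2 \|\v\|$ in the factored inequality; apart from these, the whole proposition is a routine combination of the variational inequality (P1) with Cauchy--Schwarz.
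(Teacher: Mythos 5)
The paper does not actually prove Proposition~\ref{prop:hager}: it is quoted from Hager and Zhang \cite{HAGER08} with the proofs explicitly deferred to that reference and the references therein. So there is no in-paper argument to compare against; what you have written is a self-contained derivation of material the paper treats as known. Your derivation is correct and follows the standard chain: the variational characterization (P1) from one-sided differentiation of $t\mapsto\|\P_S(\x)+t(\y-\P_S(\x))-\x\|^2$ at $t=0$, firm nonexpansiveness (P2) by summing two instances of (P1), nonexpansiveness (P3) by Cauchy--Schwarz, and then the two-point argument with $\d_i=\P_S(\x-\alpha_i\g)-\x$ for the monotonicity properties (P4)--(P5). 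I verified the algebra: the two instances of (P1) do give $(\d_1+\alpha_1\g)^T\bold{\delta}\ge 0$ and $(\d_2+\alpha_2\g)^T\bold{\delta}\le 0$ with $\bold{\delta}=\d_2-\d_1$, their difference gives $\|\bold{\delta}\|^2+(\alpha_2-\alpha_1)\g^T\bold{\delta}\le 0$ and hence $\g^T\bold{\delta}\le 0$, and the rescaled subtraction does produce $\alpha_1\|\u\|^2+\alpha_2\|\v\|^2\le(\alpha_1+\alpha_2)\u^T\v$ and the factorization $(\|\u\|-\|\v\|)(\alpha_1\|\u\|-\alpha_2\|\v\|)\le 0$. (Incidentally, your argument never uses $\x\in S$, so you prove slightly more than is stated.)

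The one place to tighten is the boundary case you flag at the end of (P5): when $\alpha_1\|\u\|=\alpha_2\|\v\|$ the second factor vanishes, the product inequality is satisfied for either sign of $\|\u\|-\|\v\|$, and so it does not by itself ``force'' $\|\u\|\ge\|\v\|$. The fix is immediate and you should write it out: in that case $\|\v\|=(\alpha_1/\alpha_2)\|\u\|\le\|\u\|$ directly from $\alpha_1<\alpha_2$. With that sentence added, the proof is complete.
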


To establish Theorem~\ref{theo:limit-obj}, we begin with a lemma that
all the iterate points remain in a subset of $\FC$.

\begin{LEMM}\label{lemm:level}
Let $\LC$ be the level set of $g$ determined by the initial value 
$g(\y^0, \W^0)$,
\begin{eqnarray*}
  \LC := \{(\y, \W) \in \Real^m \times \SMAT^n :
  (\y, \W) \in \FC, \ g(\y, \W) \ge g(\y^0, \W^0) \}.
\end{eqnarray*} 
Then, the sequence $\{(\y^k, \W^k)\}$ generated by Algorithm~\ref{algo:dspg} satisfies $(\y^k, \W^k) \in \LC$ for each $k$.
\end{LEMM}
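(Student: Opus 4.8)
The plan is to argue by induction on $k$. The base case $k=0$ is immediate: the initial point lies in $\FC$ by Step 0, and $g(\y^0,\W^0)\ge g(\y^0,\W^0)$ holds trivially, so $(\y^0,\W^0)\in\LC$. For the inductive step I assume $(\y^j,\W^j)\in\LC$ for all $j\le k$ and show $(\y^{k+1},\W^{k+1})\in\LC$. This splits into two independent claims: feasibility $(\y^{k+1},\W^{k+1})\in\FC$, and the level-set bound $g(\y^{k+1},\W^{k+1})\ge g(\y^0,\W^0)$. Throughout I use that the sequence is, by hypothesis, actually generated, so that each line search terminates with $\lambda^k\le\overline{\lambda}^k\le 1$; since every backtracking factor lies in $[\sigma_1,\sigma_2]\subset(0,1)$, we have $\lambda^k\in(0,1]$.

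For feasibility I would treat the two defining conditions of $\FC$ separately. The box condition $\W^{k+1}\in\WC$ follows from convexity: by \eqref{eq:search-direction} the point $\W^k+\Delta\W^k=[\W^k+\alpha^k\X^k]_{\le\bold{\rho}}$ lies in $\WC$, so $\W^{k+1}=(1-\lambda^k)\W^k+\lambda^k(\W^k+\Delta\W^k)$ is a convex combination of two elements of the convex set $\WC$. For positive definiteness I would use the Cholesky factorization of Step 3: since $\AC^T$ is linear, $\C+\W^{k+1}-\AC^T(\y^{k+1})=\L\bigl(\I+\lambda^k\L^{-1}(\Delta\W^k-\AC^T(\Delta\y^k))\L^{-T}\bigr)\L^T$, and the smallest eigenvalue of the bracketed matrix equals $1+\lambda^k\theta$. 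The rule \eqref{eq:overlabmda} gives $1+\overline{\lambda}^k\theta\ge 1-\tau>0$ when $\theta<0$ and is trivially positive when $\theta\ge0$; since $\lambda^k\le\overline{\lambda}^k$, the bracketed matrix is positive definite, hence so is $\C+\W^{k+1}-\AC^T(\y^{k+1})$. This also guarantees that the $\log\det$ term, and thus $g$, is well-defined at the new iterate.

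For the level-set bound I would first establish that the search direction is an ascent direction, i.e. $\nabla g(\y^k,\W^k)\bullet(\Delta\y^k,\Delta\W^k)\ge 0$. This is the crux of the argument and follows from property (P1) of Proposition~\ref{prop:hager}: applying (P1) with $S=\widehat{\WC}$, with its first argument set to $(\y^k,\W^k)+\alpha^k\nabla g(\y^k,\W^k)$ and its second argument (the point of $S$) set to $(\y^k,\W^k)\in\widehat{\WC}$, and substituting $\P_{\widehat{\WC}}((\y^k,\W^k)+\alpha^k\nabla g(\y^k,\W^k))=(\y^k,\W^k)+(\Delta\y^k,\Delta\W^k)$, yields $\alpha^k\nabla g(\y^k,\W^k)\bullet(\Delta\y^k,\Delta\W^k)\ge\|(\Delta\y^k,\Delta\W^k)\|^2\ge 0$, and $\alpha^k>0$ gives the claim. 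With this in hand, the non-monotone line search condition \eqref{eq:g-condition} gives $g(\y^{k+1},\W^{k+1})\ge\min_{0\le h\le\min\{k,M-1\}}g(\y^{k-h},\W^{k-h})+\gamma\lambda^k\nabla g(\y^k,\W^k)\bullet(\Delta\y^k,\Delta\W^k)\ge\min_{0\le h\le\min\{k,M-1\}}g(\y^{k-h},\W^{k-h})$, and the last quantity is at least $g(\y^0,\W^0)$ by the induction hypothesis. This closes the induction.

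The main obstacle is the ascent-direction step. Unlike the original SPG, whose direction comes from a single projection onto the full feasible set, here the direction is produced by the box projection $\P_{\widehat{\WC}}$ alone, while the positive-definite constraint is enforced separately through the step length $\overline{\lambda}^k$. One must therefore verify that this \emph{partial} projection still yields $\nabla g(\y^k,\W^k)\bullet(\Delta\y^k,\Delta\W^k)\ge 0$; the argument above shows that (P1) applied to $\widehat{\WC}$ suffices precisely because the current iterate $(\y^k,\W^k)$ already belongs to $\widehat{\WC}$, so it is an admissible choice for the second argument of (P1).
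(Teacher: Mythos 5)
Your proposal is correct and follows essentially the same route as the paper: induction, convexity of $\WC$ plus the eigenvalue bound from Step~3 for feasibility, the ascent-direction inequality $\nabla g(\y^k,\W^k)\bullet(\Delta\y^k,\Delta\W^k)\ge\frac{1}{\alpha^k}\|(\Delta\y^k,\Delta\W^k)\|^2$, and the non-monotone line-search condition to close the induction. The only (immaterial) difference is that you derive the ascent inequality from (P1) applied on the product space $\widehat{\WC}=\Real^m\times\WC$, whereas the paper splits off the unconstrained $\y$-block and applies (P2) to the $\W$-block alone; both yield the identical bound.
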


\begin{proof}
First, we prove that $(\y^k, \W^k) \in \FC$ for each $k$.
% Since $\C \succ \O$ from the assumption and $(\y^0, \W^0) = (\0, \O)$,
By the assumption (iii), 
we have $(\y^0, \W^0) \in \FC$.
Assume that $(\y^k, \W^k) \in \FC$ for some $k$.
Since $0 \le \lambda^k \le 1$ in Step 4 and $\W^k \in \WC$,
the convexity of $\WC$ indicates
% $\Delta \D = [\W^k + \alpha^k \X^k]_{\le \bold{\rho}} - \W^k$.
$\W^{k+1} = \W^k + \lambda^k \Delta \W^k
= (1 - \lambda^k) \W^k
+ \lambda^k [\W^k + \alpha^k \X^k]_{\le \bold{\rho}} \in \WC.$
In addition, the value $\theta$ of Step 3 ensures
% is conceptually derived by the relation
% \begin{eqnarray*}
%  \theta = -1/\widehat{\lambda}
%  \quad \mbox{where} \quad \widehat{\lambda} = \max\{\lambda \ge 0 :
%  \C + (\W^k + \lambda \Delta \W^k) - \AC^T(\y^k + \lambda \Delta \y^k) \succeq \O\}.
% \end{eqnarray*}
% Therefore, it holds
$\C + (\W^k + \lambda \Delta \W^k) - \AC^T(\y^k + \lambda \Delta \y^k) \succ \O$ 
for $\lambda \in [0, \overline{\lambda}^k]$.
Hence, $(\y^{k+1}, \W^{k+1}) \in \FC$.

% \red{(2016/12/06) ?????????}
Now, we verify that
$g(\y^k, \W^k) \ge g(\y^0, \W^0)$
for each $k$.
The case $k=0$ is clear.
% For simplicity, we use $\X^k := \X(\y^k, \W^k) = (\C + \W^k -
% \AC^T(\y^k))^{-1}$.
The case $k \ge 1$ depends on the fact 
$(\Delta \y^k, \Delta \W^k)$ is an ascent direction
of $g$ at $(\y^k, \W^k)$;
\begin{eqnarray}
  & & \nabla g (\y^k, \W^k) \bullet (\Delta \y^k, \Delta \W^k) 
\nonumber \\
  &=& (\nabla_{\y} g (\y^k, \W^k),
  \nabla_{\W} g (\y^k, \W^k))
  \bullet (\Delta \y^k, \Delta \W^k) \nonumber \\
%  &=& (-\b + \mu \AC(\X(\y^k, \W^k)), -\X(\y^k, \W^k))
%    \bullet (\alpha^k (\b - \mu \AC(\X(\y^k, \W^k))),
%    [\W^k + \alpha^k \X^k]_{\bold{\rho}} - \W^k) \\
  &=& \alpha^k || \b - \AC(\X^k) ||^2
    + \X^k \bullet ([\W^k + \alpha^k \X^k]_{\le \bold{\rho}} -
  \W^k) \nonumber \\
  &\ge&  \alpha^k || \b - \AC(\X^k) ||^2
  + \frac{1}{\alpha^k} ||[\W^k + \alpha^k \X^k]_{\le \bold{\rho}} - \W^k||^2 
  \nonumber \\
  &=& \frac{1}{\alpha^k} || (\Delta \y^k, \Delta \W^k) ||^2
  \label{eq:inequality-g}\\
  &\ge& 0 \nonumber .
\end{eqnarray}
The first inequality comes from (P2)
% projection(refer to, {\it e.g.}, \cite{HAGER08}),
% \begin{eqnarray*}
%  (\W_1 - \W_2) \bullet (\PC_{\WC}(\W_1) - \PC_{\WC}(\W_2)) \ge
%    || \W_1 - \W_2 ||^2
%    \quad \mbox{for} \quad \forall \W_1, \forall \W_2 \in \SMAT^n,
%  \end{eqnarray*}
by putting $\WC$ as $S$, $\W^k + \alpha^k \X^k$ as $\x$ and $\W^k$ as $\y$,
and using the relations 
$\P_{\WC}(\W^k+\alpha^k \X^k) = [\W^k+\alpha^k \X^k]_{\le \bold{\rho}}$ 
and
$\P_{\WC}(\W^k) = \W^k$ by $(\y^k, \W^k) \in \FC$.

% Since $(\Delta \y^k, \Delta \W^k)$ is a decent direction of $\overline{g}$,
When the inner iteration terminates, we have 
\begin{eqnarray*}
g(\y^{k+1}, \W^{k+1})
&\ge& \min_{0 \le h \le \min\{k, M-1\}}
g(\y^{k-h}, \W^{k-h}) + \gamma \lambda^k \nabla g(\y^k, \W^k)
\bullet (\Delta \y^k, \Delta \W^k) \\
  &\ge& 
  \min_{0 \le h \le \min\{k, M-1\}}
  g(\y^{k-h}, \W^{k-h}).
\end{eqnarray*}
Therefore, if
$\min_{0 \le h \le k} g(\y^h, \W^h) \ge g(\y^0, \W^0)$,
we obtain
$g(\y^{k+1}, \W^{k+1}) \ge g(\y^0, \W^0)$.
By induction, we conclude $(\y^k, \W^k) \in \LC$ for each $k$.
\end{proof}

The key to establishing Theorem~\ref{theo:limit-obj}
is the boundedness of the level set $\LC$.
\begin{LEMM}\label{lemm:bound}
The level set $\LC$ is bounded.
\end{LEMM}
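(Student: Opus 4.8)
The plan is to show that $\LC$ is bounded by proving that both the matrix variable $\W$ and the vector variable $\y$ remain in bounded sets whenever $(\y,\W)\in\LC$. The boundedness of $\W$ is immediate: by definition $\W\in\WC$ means $|\W|\le\bold{\rho}$ elementwise, so every entry of $\W$ lies in $[-\rho_{ij},\rho_{ij}]$ and hence $\|\W\|$ is bounded by a constant depending only on $\bold{\rho}$. The real content is to bound $\y$, and for this I would exploit the defining inequality of the level set, namely $g(\y,\W)\ge g(\y^0,\W^0)$, together with the positive-definiteness constraint $\C+\W-\AC^T(\y)\succ\O$.

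First I would recall the form of the objective, $g(\y,\W)=\b^T\y+\mu\log\det(\C+\W-\AC^T(\y))+n\mu-n\mu\log\mu$, and observe that the level-set condition gives an upper bound of the form
\begin{eqnarray*}
\b^T\y+\mu\log\det(\C+\W-\AC^T(\y))\ \ge\ g(\y^0,\W^0)-n\mu+n\mu\log\mu,
\end{eqnarray*}
so that $\b^T\y$ is bounded below by a constant minus $\mu\log\det(\C+\W-\AC^T(\y))$. The key is then to get an \emph{upper} bound on the log-determinant term. I would introduce the interior feasible point $\X\succ\O$ with $\AC(\X)=\b$ guaranteed by assumption (ii), and use the inner product with the positive definite matrix $\C+\W-\AC^T(\y)$: since both are positive definite, $\X\bullet(\C+\W-\AC^T(\y))>0$, and expanding the $-\AC^T(\y)$ term produces exactly $-\AC(\X)^T\y=-\b^T\y$. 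This ties the term $\b^T\y$ to $\X\bullet(\C+\W)$ minus the positive quantity $\X\bullet(\C+\W-\AC^T(\y))$.

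The main obstacle, and the step requiring the most care, is combining these two facts to force $\|\y\|$ to stay bounded. The mechanism is the concavity/growth mismatch between the linear term $\b^T\y$ and the logarithmic barrier: along any ray in $\y$ that keeps $\C+\W-\AC^T(\y)\succ\O$, the largest eigenvalue of $\C+\W-\AC^T(\y)$ grows at most linearly in $\|\y\|$ (since $\AC^T$ is a bounded linear map), so $\mu\log\det$ grows at most logarithmically, whereas I must show $\b^T\y$ cannot run off to $+\infty$. Using the interior-point inequality $\X\bullet(\C+\W-\AC^T(\y))>0$ rearranged as $\b^T\y<\X\bullet(\C+\W)$, I obtain a genuine \emph{upper} bound on $\b^T\y$ that is uniform over the bounded set of admissible $\W$. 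The surjectivity of $\AC$ from assumption (i) is what lets me conclude that controlling $\b^T\y$ together with the positive-definiteness constraint controls $\y$ itself.

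To finish, I would argue by contradiction: suppose $\LC$ is unbounded, so there is a sequence $(\y^k,\W^k)\in\LC$ with $\|\y^k\|\to\infty$; passing to a subsequence, the bounded $\W^k$ converge and $\y^k/\|\y^k\|$ converges to a unit vector $\bar\y$. The positive-definiteness constraint forces $\C+\W^k-\AC^T(\y^k)\succeq\O$ in the limit direction, giving $-\AC^T(\bar\y)\succeq\O$; together with the lower bound on $g$ this yields $\b^T\y^k\to+\infty$ at a linear rate dominating the at-most-logarithmic barrier growth, contradicting the upper bound $\b^T\y^k<\X\bullet(\C+\W^k)$ which is bounded above. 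Hence no such unbounded sequence exists and $\LC$ is bounded.
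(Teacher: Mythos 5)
Your overall plan (bounding $\W$ trivially via $|\W|\le\bold{\rho}$, then combining the level-set inequality with the primal interior point $\widehat{\X}\succ\O$, $\AC(\widehat{\X})=\b$, and the surjectivity of $\AC$ to bound $\y$) is the same as the paper's, but the step that is supposed to produce the contradiction is wrong as written. From $-\AC^T(\bar{\y})\succeq\O$ and ``the lower bound on $g$'' you assert that $\b^T\y^k\to+\infty$ at a linear rate, to be played off against the constant upper bound $\b^T\y^k<\widehat{\X}\bullet(\C+\W^k)$. But the level-set inequality only gives $\b^T\y^k\ge \bar{c}_0-\mu\log\det\Z^k$ with $\Z^k=\C+\W^k-\AC^T(\y^k)$, and since $\lambda_{\max}(\Z^k)$ grows at most linearly in $\|\y^k\|$, this lower bound may tend to $-\infty$ (logarithmically); nothing forces $\b^T\y^k\to+\infty$ --- that would require $\det\Z^k\to 0$, which is not available. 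The inequality $\widehat{\X}\bullet\Z^k>0$ that you actually use is too weak: it caps $\b^T\y^k$ from above by a constant, but a constant bound on the single linear functional $\b^T\y$ plus the semidefinite constraint does not, by the argument you give, prevent $\|\y^k\|\to\infty$.

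The missing ingredient is to use the interior point quantitatively. The paper rewrites the level-set inequality as $\widehat{\X}\bullet\Z-\mu\log\det\Z\le c$ and exploits $\widehat{\X}\bullet\Z\ge\lambda_{\min}(\widehat{\X})\sum_{i}\lambda_i(\Z)$ with $\lambda_{\min}(\widehat{\X})>0$; the scalar bound $at-\mu\log t\ge\mu(1+\log(a/\mu))$ then forces $\lambda_{\max}(\Z)$ (and afterwards $\lambda_{\min}(\Z)$) to be bounded, and $\|\y\|$ is recovered from $\|\Z\|$ via $\y=(\AC\AC^T)^{-1}\AC(\C+\widehat{\W}-\Z)$. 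Alternatively, your compactness argument can be repaired with the sign reversed: for a unit limit direction $\bar{\y}$ with $-\AC^T(\bar{\y})\succeq\O$ one has $\b^T\bar{\y}=\widehat{\X}\bullet\AC^T(\bar{\y})<0$ strictly (equality would force $\AC^T(\bar{\y})=\O$, hence $\bar{\y}=\0$ by injectivity of $\AC^T$, contradicting $\|\bar{\y}\|=1$), so $\b^T\y^k\to-\infty$ linearly, which contradicts the merely logarithmic decay allowed by $\b^T\y^k\ge\bar{c}_0-n\mu\log(C_1+C_2\|\y^k\|)$. Either route closes the gap; as stated, your final implication does not hold.
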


\begin{proof}
If $(\y, \W) \in \LC$, then $\W \in \WC$. Thus, 
the boundedness of $\W$ is clear from $|W_{ij}| \le \rho_{ij}$.
We then fix $\widehat{\W} \in \WC$ and show the boundedness of
\begin{eqnarray*}
  \LC_{\widehat{\W}} := \{\y \in \Real^m :
g(\y, \widehat{\W}) \ge g(\y^0, \W^0),
\quad
\C + \widehat{\W} - \AC^T(\y) \succ \O\}.
\end{eqnarray*}
Let $\Z := \C + \widehat{\W} - \AC^T(\y)$ for $\y \in \LC_{\widehat{\W}}$.
Since $\AC$ is surjective,
the map $\AC \AC^T : \Real^m \to \Real^m$ is nonsingular and
\begin{eqnarray*}
  ||\y|| = ||(\AC \AC^T)^{-1} \AC(\C + \widehat{\W} - \Z)||
  \le ||(\AC \AC^T)^{-1}|| \cdot ||\AC|| \cdot (||\C|| + ||\widehat{\W}|| + ||\Z||).
\end{eqnarray*}
Hence, if we can prove the boundedness of $\Z$, the desired result follows.

Since we assume that $(\PC)$ has at least one interior point,
there exists $\widehat{\X}$ such that $\AC(\widehat{\X}) = \b$ and $\widehat{\X} \succ \O$.
We denote the eigenvalues of $\Z$ by
$0 < \lambda_1(\Z) \le \lambda_2(\Z) \le \cdots \le \lambda_n(\Z)$.
For simplicity, we use $\lambda_{\min} (\Z) := \lambda_1(\Z)$ and
$\lambda_{\max} (\Z) := \lambda_n(\Z)$.
Letting $\bar{c}_0 :=  g(\y^0, \W^0) - n \mu + n \mu \log \mu$, we  can derive equivalent inequalities from $g(\y, \widehat{\W}) \ge g(\y^0, \W^0)$;
\begin{eqnarray*}
& & g(\y, \widehat{\W}) \ge g(\y^0, \W^0) \\
& \Leftrightarrow & \b^T \y + \mu \log \det (\C + \widehat{\W} - \AC^T(\y)) \ge 
\bar{c}_0 \\
&\Leftrightarrow & \AC(\widehat{\X})^T\y + \mu \log \det \Z \ge \bar{c}_0 \\
&\Leftrightarrow & \widehat{\X}\bullet\AC^T(\y) + \mu \log \det \Z \ge \bar{c}_0 \\
&\Leftrightarrow & \widehat{\X} \bullet (\C + \widehat{\W} - \Z) + \mu \log \det \Z \ge \bar{c}_0 \\
&\Leftrightarrow & \widehat{\X} \bullet \Z - \mu \log \det \Z 
\le -\bar{c}_0 + \widehat{\X} \bullet (\C + \widehat{\W})
\end{eqnarray*}
Since $\widehat{\X} \bullet \widehat{\W} = 
\sum_{i=1}^n \sum_{j=1}^n \widehat{X}_{ij}  \widehat{W}_{ij}
\le
\sum_{i=1}^n \sum_{j=1}^n |\widehat{X}_{ij}|  \rho_{ij} 
= |\widehat{\X}| \bullet \bold{\rho}$, it holds that 
$\widehat{\X} \bullet \Z - \mu \log \det \Z 
% \le -\mu \log \det \C + \widehat{\X} \bullet (\C + \widehat{\W})
\le c$,
where $c :=  - \bar{c}_0 + \widehat{\X} \bullet \C
+ |\widehat{\X}| \bullet \bold{\rho}$.
From  $\min_t \{at -\log t : t>0 \} = 1 + \log a$ for
any $a > 0$, it follows that
\begin{eqnarray*}
  \widehat{\X} \bullet \Z - \mu \log \det \Z
  &\ge& \sum_{i=1}^n [\lambda_{\min}(\widehat{\X}) \lambda_i(\Z)
  - \mu \log \lambda_i (\Z)] \\
  &\ge& (n-1) \mu \left(1 + \log \frac{\lambda_{\min}(\widehat{\X})}{\mu}\right)
  + \lambda_{\min}(\widehat{\X}) \lambda_{\max}(\Z) - \mu \log \lambda_{\max} (\Z).
\end{eqnarray*}
Hence, 
\begin{eqnarray*}
  \lambda_{\min}(\widehat{\X}) \lambda_{\max}(\Z)
- \mu \log \lambda_{\max}(\Z) 
\le 
c - (n-1) \mu \left(1 + \log \frac{\lambda_{\min}(\widehat{\X})}{\mu}\right).
\end{eqnarray*}
Note that the right-hand side is determined by only $\widehat{\X}$ and is independent from $\Z$, and that $\lambda_{\min}(\widehat{\X}) > 0$ from $\widehat{\X} \succ \O$.
Hence, there exists $\beta_{\sZ}^{\max} < \infty$ such that
$\lambda_{\max}(\Z) \le \beta_{\sZ}^{\max}$ for all
$(\y, \widehat{\W}) \in \LC$.

In addition,
from $\widehat{\X} \bullet \Z - \mu \log \det \Z \le c$ and $\widehat{\X} \bullet \Z \ge 0$,
we have
\begin{eqnarray*}
  % \X^+ \bullet \Z - \mu \log \det \Z &\le& c \\
  % \mu \log \det \Z &\ge& -c +  \X^+ \bullet \Z  \\
  \log \det \Z &\ge& -\frac{c}{\mu} \\
  \log \lambda_{\min}(\Z) &\ge&
  -\frac{c}{\mu}  - \sum_{i=2}^{n} \log \lambda_i(\Z)
    \ge  -\frac{c}{\mu}  - (n-1) \log \beta_{\sZ}^{\max} \\
  \lambda_{\min}(\Z) &\ge&
  \beta_{\sZ}^{\min}
  := \exp\left(-\frac{c}{\mu}  - (n-1) \log \beta_{\sZ}^{\max}
      \right) > 0.
\end{eqnarray*}
Therefore, the minimum and maximum eigenvalues of $\Z$ are bounded
for $(\y, \widehat{\W}) \in \LC$.
This completes the proof.
\end{proof}

%We mention on the boundedness in the following.
\begin{REMA}\label{rem:yw-bound} \rm
From Lemmas~\ref{lemm:level} and \ref{lemm:bound},
$||\y^k||$ and $||\W^k||$ are bounded;
$||\y^k|| \le \eta_{\y} := ||(\AC\AC^T)^{-1}||\cdot||\AC||\cdot
(||\C||+||\bold{\rho}||+\sqrt{n}\beta_{\sZ}^{\max})$ and 
$||\W^k|| \le \eta_{\sW} := ||\bold{\rho}||$.
\end{REMA}

\begin{REMA}\label{rema:x-bound} \rm
Lemma~\ref{lemm:bound} implies that the set 
$\{\X(\y, \W) : (\y, \W) \in \LC\}$ is also bounded.
If we denote
$\beta_{\sX}^{\min} := \frac{\mu}{\beta_{\sZ}^{\max}} > 0$
and
$\beta_{\sX}^{\max} := \frac{\mu}{\beta_{\sZ}^{\min}} < \infty$,
then we have
$\beta_{\sX}^{\min} \I \preceq \X(\y, \W) \preceq \beta_{\sX}^{\max} \I$
for $(\y, \W) \in \LC$.
In particular, since $(\y^k, \W^k) \in \LC$ from Lemma~\ref{lemm:level},
$\X^k = \X(\y^k,\W^k) = \mu (\C + \W - \AC^T(\y^k))^{-1}$ is also bounded;
$\beta_{\sX}^{\min} \I \preceq \X^k \preceq \beta_{\sX}^{\max} \I$.
Furthermore,  for $(\y, \W) \in \LC$, we obtain the bounds
$||\X(\y, \W)|| \le \eta_{\sX}$ and
$||\X^{-1}(\y, \W)|| \le \eta_{\sX^{-1}}$,
where
$\eta_{\sX} := \sqrt{n} \beta_{\sX}^{\max}  > 0$ 
and $\eta_{\sX^{-1}} := \frac{\sqrt{n}}{\beta_{\sX}^{\min}}  > 0$.
Hence, it holds that 
$||\X^k|| \le \eta_{\sX}$ and $||(\X^k)^{-1}|| \le \eta_{\sX^{-1}}$
for each $k$.
\end{REMA}

\begin{REMA}\label{rema:yw-bound} \rm
It follows from Remark~\ref{rema:x-bound} that
$||\Delta \y^k||$ and $||\Delta \W^k||$ are also bounded by
$\eta_{\Delta \y} : = \alpha_{\max}(||\b|| + ||\AC|| \eta_{\X})$
and $\eta_{\Delta \sW} := \alpha_{\max} \eta_{\sX}$, respectively.
These bounds are found by
\begin{eqnarray*}
  ||\Delta \y^k|| &=& ||\alpha^k (\b - \AC(\X^k)) ||
  \le \alpha^k (||\b|| + ||\AC||\cdot ||\X^k||)
  \le \alpha_{\max} (||\b|| + ||\AC|| \eta_{\sX}) \\
  ||\Delta \W^k|| &=&
  ||[\W^k + \alpha^k \X^k]_{\le \bold{\rho}} - \W^k||
  \le || \alpha^k \X^k || \le \alpha_{\max} \eta_{\sX}.
  \end{eqnarray*}
For $||\Delta \W^k||$, we substitute $S = \WC$,
$\x = \W^k + \alpha^k \X^k$ and $\y = \W^k = \P_{\sWC} (\W^k)$ to (P3).
\end{REMA}

From Lemma~\ref{lemm:bound}, the set of the optimal solutions $\FC^*$ is a subset of
$\{(\y, \W) \in \Real^m \times \SMAT^n : |\W| \le \bold{\rho}, \ 
\beta_{\sZ}^{\min} \I \preceq \C + \W - \AC^T(\y)  
\preceq \beta_{\sZ}^{\max}\I \}$
and it is a closed convex set and bounded.  %Hence, 
From the continuity of the objective
function $g$, the dual problem $(\DC)$ has an optimal solution.
% in $\LC \subset \FC$.
Furthermore, since both $(\PC)$ and $(\DC)$ has an interior feasible point,
the duality theorem holds~\cite{BORWEIN06, BOYD04}, that is,
the primal problem $(\PC)$ 
also has an optimal solution and there is no duality gap between
$(\PC)$ and $(\DC)$,  $f^* = g^*$.
In the following Lemma~\ref{lemm:optset}, we show the uniqueness of 
the optimal solution in $(\PC)$ and a property of 
the optimal solutions in  $(\DC)$.

\begin{LEMM}\label{lemm:optset}
The optimal solution of $(\PC)$ is unique.
In addition, if both $(\y_1^*, \W_1^*)$ and $(\y_2^*, \W_2^*)$
are optimal solutions of $(\DC)$, then
$\X(\y_1^*,\W_1^*) = \X(\y_2^*,\W_2^*)$ and
$\b^T \y_1^* = \b^T \y_2^*$.
\end{LEMM}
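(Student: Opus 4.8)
The plan is to derive the first claim from strict convexity of the primal objective and the second from strict concavity of $\log\det$. First I would record that the primal feasible set $\{\X : \AC(\X) = \b,\ \X \succ \O\}$ is convex, being the intersection of an affine subspace with the positive definite cone. The objective $f(\X) = \text{Tr}(\C\X) - \mu\log\det\X + \text{Tr}(\bold{\rho}|\X|)$ is then strictly convex on this set: the term $-\mu\log\det\X$ is strictly convex for $\mu > 0$, while $\text{Tr}(\C\X)$ is linear and $\text{Tr}(\bold{\rho}|\X|)$ is convex, so the sum is strictly convex. Since an optimal solution of $(\PC)$ exists (as established just above the lemma), strict convexity over a convex set forces it to be unique.

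For the dual statement I would work through the variable $\Z_i := \C + \W_i^* - \AC^T(\y_i^*)$, so that $\X(\y_i^*, \W_i^*) = \mu\Z_i^{-1}$. Consider the midpoint $(\bar{\y}, \bar{\W}) := \frac{1}{2}(\y_1^* + \y_2^*, \W_1^* + \W_2^*)$, which lies in $\FC$ by convexity of the feasible region and satisfies $\bar{\Z} := \C + \bar{\W} - \AC^T(\bar{\y}) = \frac{1}{2}(\Z_1 + \Z_2)$. Because $\b^T\y$ is linear in $\y$ and $\mu\log\det$ is concave, I get $g(\bar{\y}, \bar{\W}) \ge \frac{1}{2}(g(\y_1^*, \W_1^*) + g(\y_2^*, \W_2^*)) = g^*$; optimality gives the reverse inequality $g(\bar{\y}, \bar{\W}) \le g^*$, so equality holds throughout. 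The decisive point is that strict concavity of $\log\det$ makes the concavity bound $\mu\log\det\bar{\Z} \ge \frac{1}{2}(\mu\log\det\Z_1 + \mu\log\det\Z_2)$ an equality only when $\Z_1 = \Z_2$. Hence $\Z_1 = \Z_2$, which immediately yields $\X(\y_1^*, \W_1^*) = \mu\Z_1^{-1} = \mu\Z_2^{-1} = \X(\y_2^*, \W_2^*)$. Finally, substituting $\Z_1 = \Z_2$ into the equal optimal values $g(\y_1^*, \W_1^*) = g(\y_2^*, \W_2^*)$ and cancelling the now-equal $\mu\log\det\Z_i$ terms leaves $\b^T\y_1^* = \b^T\y_2^*$.

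The main subtlety, and the reason the lemma claims equality only of $\X(\cdot)$ and $\b^T\y$ rather than full uniqueness of $(\y^*, \W^*)$, is that the affine map $(\y, \W) \mapsto \C + \W - \AC^T(\y)$ is not injective: distinct dual points can produce the same $\Z$. Consequently the strict concavity of $\log\det$ pins down only the combination $\Z$ (equivalently $\X$), and the single scalar $\b^T\y$ is all that can be recovered from equating the optimal objective values. I expect the cleanest execution is to phrase the whole dual argument in terms of $\Z$ from the start, so that the strict-concavity equality condition and the final cancellation both follow directly.
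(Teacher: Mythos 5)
Your proposal is correct and follows essentially the same route as the paper: both arguments rest on the strict convexity/concavity of $\pm\log\det$, evaluate the midpoint of two putative optimal solutions (feasible by convexity of the feasible set), conclude $\C + \W_1^* - \AC^T(\y_1^*) = \C + \W_2^* - \AC^T(\y_2^*)$, and then cancel the equal $\log\det$ terms to get $\b^T\y_1^* = \b^T\y_2^*$. The only cosmetic difference is that the paper phrases the dual step as a contradiction (a strictly better feasible midpoint) while you phrase it as the equality case of the concavity inequality; these are the same argument.
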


\begin{proof}
Since the function $-\log \det \X$ is strictly convex
\cite{BOYD04}, 
we have
\begin{eqnarray}
-\log \det \left(\frac{\X_1 + \X_2}{2}\right)
< -\frac{1}{2} \log \det \X_1
-\frac{1}{2} \log \det \X_2
\quad \mbox{for} \quad \forall \X_1 \succ \O , \forall \X_2 \succ \O
(\X_1 \ne \X_2).
\label{eq:strcitly-convex}
\end{eqnarray}

Suppose that we have two different optimal solutions $(\y_1^*, \W_1^*)$ and
$(\y_2^*, \W_2^*)$ for $(\DC)$ such that
$\C + \W_1^* - \AC^T(\y_1^*)  \ne \C + \W_2^* - \AC^T(\y_2^*)$.
Since $(\y_1^*, \W_1^*)$ and $(\y_2^*, \W_2^*)$ attain the same objective value, it holds that
$ g^* = \b^T \y_1^*  + \mu \log \det (\C + \W_1^* - \AC^T(\y_1^*)) + n \mu - n \mu \log \mu
= \b^T \y_2^*  + \mu \log \det (\C + \W_2^* - \AC^T(\y_2^*))  + n \mu - n \mu \log \mu$.
Since the feasible set of $(\DC)$ is convex,
$\left(\frac{\y_1^* + \y_2^*}{2}, \frac{\W_1^* + \W_2^*}{2}\right)$
is also feasible.
However, the inequality (\ref{eq:strcitly-convex}) indicates
\begin{eqnarray*}
& & \b^T \left(\frac{\y_1^* + \y_2^*}{2}\right)
+  \mu \log \det \left(\C + \frac{\W_1^*+\W_2^*}{2}
          + \AC^T\left(\frac{\y_1^*+\y_2^*}{2}\right)\right) + n \mu - n \mu \log \mu \\
&>& \frac{1}{2}
\left(\b^T \y_1^* + \mu \log \det (\C + \W_1^* - \AC^T(\y_1^*)) + n \mu - n \mu \log \mu \right) \\
& &  + \frac{1}{2}
\left(\b^T \y_2^* + \mu \log \det (\C + \W_2^* - \AC^T(\y_2^*)) + n \mu - n \mu \log \mu \right)
= \frac{g^*}{2} + \frac{g^*}{2}
= g^*.
\end{eqnarray*}
This is a contradiction to the optimality of $g^*$.
Hence, we obtain
$\C + \W_1^* - \AC^T(\y_1^*)  = \C + \W_2^* -\AC^T(\y_2^*)$,
which is equivalent to
$\X(\y_1^*, \W_1^*) = \X(\y_2^*, \W_2^*)$.
Since the objective values of both  $(\y_1^*, \W_1^*)$ and
$(\y_2^*, \W_2^*)$ are $g^*$,
it is easy to show  $\b^T \y_1^* = \b^T \y_2^*$ from  $\C + \W_1^* - \AC^T(\y_1^*)  = \C + \W_2^* -\AC^T(\y_2^*)$.

The uniqueness of optimal solution in $(\PC)$
can also be obtained by the same argument using (\ref{eq:strcitly-convex}).
\end{proof}

Next, we examine the validity of the stopping criteria in
Algorithm~\ref{algo:dspg}.
\begin{LEMM}\label{lemm:stop}
$(\y^*, \W^*)$ is optimal for $(\DC)$
if and only if $(\y^*, \W^*) \in \FC$ and
\begin{eqnarray}
\P_{\widehat{\sWC}}((\y^*, \W^*)
+ \alpha \nabla g(\y^*, W^*)) = (\y^*, \W^*)
\label{eq:opt-condition}
\end{eqnarray}
for some $\alpha > 0$.
\end{LEMM}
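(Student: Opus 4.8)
The plan is to route optimality through two variational inequalities and then reconcile them. Write $\z^* := (\y^*,\W^*)$ and recall $\nabla g(\z^*) = (\b - \AC(\X^*), \X^*)$ with $\X^* := \X(\y^*,\W^*)$. Since $g$ is concave (the composition of the concave $\log\det$ with an affine map, plus a linear term), $\FC$ is convex, and any optimal $\z^*$ lies in the open set $\widehat{\FC}$ so that $g$ is differentiable at $\z^*$, I would first establish that $\z^*$ is optimal for $(\DC)$ if and only if $\z^*\in\FC$ and
\begin{equation}
\nabla g(\z^*)\bullet((\y,\W)-\z^*)\le 0\quad\text{for all }(\y,\W)\in\FC.
\tag{VI-$\FC$}
\end{equation}
The forward implication follows by differentiating $t\mapsto g(\z^*+t((\y,\W)-\z^*))$ at $t=0^+$ along the segment that remains in $\FC$ by convexity, and the reverse from the gradient inequality for concave functions.

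Next I would characterize the fixed-point condition (\ref{eq:opt-condition}) via the explicit projection (\ref{eq:PW}) and property (P1), applied with the inner product $\bullet$ on $\Real^m\times\SMAT^n$. If (\ref{eq:opt-condition}) holds for some $\alpha>0$, then substituting $\x=\z^*+\alpha\nabla g(\z^*)$, $S=\widehat{\WC}$ and $\P_S(\x)=\z^*$ into (P1) gives $\alpha\,\nabla g(\z^*)\bullet((\y,\W)-\z^*)\le 0$ for every $(\y,\W)\in\widehat{\WC}$; since $\alpha>0$ this is exactly
\begin{equation}
\nabla g(\z^*)\bullet((\y,\W)-\z^*)\le 0\quad\text{for all }(\y,\W)\in\widehat{\WC},
\tag{VI-$\widehat{\WC}$}
\end{equation}
independently of the particular $\alpha$. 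Conversely, $\z^*\in\widehat{\WC}$ (which holds because $\W^*\in\WC$) together with (VI-$\widehat{\WC}$) identifies $\z^*$ as $\P_{\widehat{\WC}}(\z^*+\alpha\nabla g(\z^*))$ by the standard variational characterization of the projection onto a convex set, recovering (\ref{eq:opt-condition}).

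It then remains to match (VI-$\FC$) with (VI-$\widehat{\WC}$) under $\z^*\in\FC$. Because $\FC\subseteq\widehat{\WC}$, the implication (VI-$\widehat{\WC}$)$\Rightarrow$(VI-$\FC$) is immediate, which already yields the ``if'' direction of the lemma. The substantive step, which I expect to be the main obstacle, is (VI-$\FC$)$\Rightarrow$(VI-$\widehat{\WC}$); this is precisely where the Dual SPG departs from the original method, since it projects only onto the box $\widehat{\WC}$ rather than onto $\FC$. Here I would exploit that $\z^*\in\FC$ forces $\C+\W^*-\AC^T(\y^*)\succ\O$ strictly, so $\z^*$ is interior to the open set $\widehat{\FC}$. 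For an arbitrary $(\y,\W)\in\widehat{\WC}$, the point $\z_t := \z^*+t((\y,\W)-\z^*)$, with components $\y_t$ and $\W_t$, satisfies $\W_t\in\WC$ for $t\in[0,1]$ by convexity of $\WC$, while $\C+\W_t-\AC^T(\y_t)\succ\O$ for all sufficiently small $t>0$ by continuity of the affine map $t\mapsto\C+\W_t-\AC^T(\y_t)$ and openness of the positive-definite cone; hence $\z_t\in\FC$ for small $t>0$. Applying (VI-$\FC$) at $\z_t$ gives $t\,\nabla g(\z^*)\bullet((\y,\W)-\z^*)\le 0$, and dividing by $t>0$ yields (VI-$\widehat{\WC}$). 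Chaining the three equivalences then proves the lemma.
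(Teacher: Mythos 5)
Your proof is correct, but it takes a genuinely different route from the paper's. You stay entirely on the dual side: both directions are reduced to the equivalence of two variational inequalities, one over $\FC$ and one over $\widehat{\WC}$, linked by the observation that $\FC=\widehat{\WC}\cap\widehat{\FC}$ with $\widehat{\FC}$ open and containing every feasible point, and the fixed-point condition (\ref{eq:opt-condition}) is identified with the variational inequality over $\widehat{\WC}$ via (P1) and the standard characterization of the projection onto a convex set. The paper treats the two directions asymmetrically: for ``fixed point $\Rightarrow$ optimal'' it constructs the primal candidate $\X^*=\X(\y^*,\W^*)$, reads off $\AC(\X^*)=\b$ and the box conditions from the fixed-point equation, and shows by a case analysis on the sign of $X^*_{ij}$ that the duality gap $f(\X^*)-g(\y^*,\W^*)=\bold{\rho}\bullet|\X^*|-\W^*\bullet\X^*$ vanishes, so both points are optimal by weak duality; for ``optimal $\Rightarrow$ fixed point'' it writes the variational inequality over $\FC$ and perturbs along the specific directions $(\b-\AC(\X^*),\O)$ and elementwise in $\W^*$, which is your general implication specialized to particular directions and finished by case analysis rather than by the abstract projection characterization. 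Your argument is shorter, more symmetric, and delivers the $\alpha$-independence of (\ref{eq:opt-condition}) for free; the paper's argument buys as byproducts the identity $\bold{\rho}\bullet|\X^*|=\W^*\bullet\X^*$ (its equation (\ref{eq:primal-dual})), which is reused in the proof of Lemma~\ref{lemm:liminf-obj}, and the explicit identification of $\X^*$ as the primal optimal solution used in Remark~\ref{rema:x-optimal}; with your proof those facts would have to be rederived separately, though they still follow from the absence of a duality gap.
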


As proven in \cite{HAGER08}, for a general convex problem
\begin{eqnarray*}
\min f_1(\x) \quad \mbox{s.t.} \quad \x \in S_1
\end{eqnarray*}
with a differentiable convex function $f_1:\Real^n \to \Real$
and a closed convex set $S_1 \subset \Real^n$,
a point $\x^* \in S_1$ is optimal if and only if
$\P_{S_1}(\x^* - \alpha \nabla f_1(\x^*)) = \x^*$ for some $\alpha > 0$.
This condition is further extended to 
$\P_{S_1}(\x^* - \alpha \nabla f_1(\x^*)) = \x^*$ for {\it any} $\alpha > 0$.
This results cannot be applied to $(\DC)$
%However, we should not apply this result to $(\DC)$,
since the projection onto the intersection
$\FC = \widehat{\WC} \cap \widehat{\FC}$ is not available
at a low computation cost.  The projection considered in the proposed method is onto
$\widehat{\WC}$, thus we prove   %this leads us to the necessity of a proof for 
Lemma~\ref{lemm:stop} as follows.

\begin{proof}
It is easy to show that
the condition (\ref{eq:opt-condition})
for some $\alpha > 0$ is equivalent to (\ref{eq:opt-condition})
for any $\alpha > 0$,
following the proof for the condition (P6) of \cite{HAGER08}.

We now suppose that $(\y^*, \W^*) \in \FC$ and 
$\P_{\widehat{\WC}}((\y^*, \W^*)
+ \alpha \nabla g(\y^*, \W^*)) = (\y^*, \W^*)$
for any $\alpha > 0$.
Let $\X^* :=  \X(\y^*, \W^*) = \mu (\C + \W^* - \AC^T(\y^*))^{-1}$.
By considering the definitions of $\P_{\widehat{\WC}}$ and $\nabla g$
into (\ref{eq:opt-condition}), we have
two equalities $\AC(\X^*) = \b$ and
$[\W^* + \alpha \X^*]_{\le \bold{\rho}} = \W^*$.
Since $\X^* \succ O$, $\X^*$ is a feasible point of $(\PC)$.
The second equality $[\W^* + \alpha \X^*]_{\le \bold{\rho}} = \W^*$
indicates the three cases:
\begin{enumerate}
  \item[Case 1] ($X^*_{ij} > 0$) : 
      There exists $\alpha > 0$ such that 
      $W^*_{ij} + \alpha X^*_{ij} > \rho_{ij}$.
      From $[\W^* + \alpha \X^*]_{\le \bold{\rho}} = \W^*$,
      we obtain  $W^*_{ij} = \rho_{ij}$ .
  \item[Case 2] ($X^*_{ij} < 0$) :
      In a similar way to Case 1, we obtain $W^*_{ij} = -\rho_{ij}$.
  \item[Case 3] ($X^*_{ij} = 0$) :
      In this case, we know only $|W^*_{ij}| \le \rho_{ij}$.
\end{enumerate}

Using the relations $\X^* = \mu (\C + \W^* - \AC^T(\y^*))^{-1}$ and
$\AC(\X^*) = \b$,
we consider the difference of the primal and dual objective
functions,
\begin{eqnarray}
& & f(\X^*) - g(\y^*,\W^*) \nonumber \\
&=& \left(\C \bullet \X^* - \mu \log \det \X^*
  +  \bold{\rho} \bullet |\X^*| \right) \nonumber \\
& & - \left(\b^T \y^* + \mu \log \det (\C + \W^* - \AC^T(\y^*))
    + n \mu (1-\log \mu) \right) \nonumber \\
&=& 
  \bold{\rho} \bullet |\X^*| - \W^* \bullet \X^* \label{eq:primal-dual}
\end{eqnarray}
The above three cases imply that this difference is 0.
Note that $\X^*$ and $(\y^*, \W^*)$ are feasible for
$(\PC)$ and $(\DC)$, respectively, and there is no duality gap, hence,
$\X^*$ and $(\y^*, \W^*)$ are optimal for $(\PC)$ and $(\DC)$.

For the converse, we suppose that $(\y^*, \W^*)$ is 
an optimal solution of $(\DC)$.
Again, let $\X^* = \mu (\C + \W^* - \AC^*(\y^T))^{-1}$.
Since $(\DC)$ is a concave maximization problem,
$(\y^*,\W^*)$ satisfies
\begin{eqnarray*}
\nabla g(\y^*,\W^*) \bullet ((\y,\W) - (\y^*,\W^*)) \le 0
\quad \mbox{for} \quad \forall (\y, \W) \in \FC,
\end{eqnarray*}
or equivalently,
\begin{eqnarray}
(\b - \AC(\X^*))^T (\y - \y^*)
  + \X^* \bullet (\W - \W^*) \le 0
  \quad \mbox{for} \quad \forall (\y, \W) \in \FC. \label{eq:opt-conditions2}
\end{eqnarray}
Since $\C + \W^* - \AC^T(\y^*) \succ \O$ and $\AC^T$ is a continuous map,
there is a small  $t > 0$ such that 
$\C + \W^* - \AC^T(\y^* + t (\b - \AC(\X^*))) \succ \O$.
Therefore $(\y^* + t (\b - \AC(\X^*)), \W^*)$ is feasible,
and when we put $(\y^* + t (\b - \AC(\X^*)), \W^*) \in \FC$ into
$(\y, \W)$ of (\ref{eq:opt-conditions2}), we obtain $\AC(\X^*) = \b$.
Hence, we have
$\y^* + \alpha (\b - \AC(\X^*) ) = \y^*$.
% $\y^* + \alpha (\b - \mu \AC((\C + \W^* - \AC^*(\y^*))^{-1})) = \y^*$.
Similarily, when we perturb $\W^*$ in element-wise, 
we obtain two indications;
if $\X^*_{ij} > 0$ then $\W^*_{ij} = \rho_{ij}$
and 
if $\X^*_{ij} < 0$ then $\W^*_{ij} = -\rho_{ij}$.
This leads to the results
$[\W^* + \alpha \X^*]_{\le \bold{\rho}} = \W^*$.
Hence, we have shown that  (\ref{eq:opt-condition}) holds
for $\forall \alpha > 0$.
\end{proof}

From Lemma~\ref{lemm:stop} and Lemma~\ref{lemm:optset},
we also find the relation of the optimal solutions of $(\PC)$ and $(\DC)$.

\begin{REMA}\label{rema:x-optimal} \rm
The matrix $\X^*$ computed by $\X^* := \X(\y^*, \W^*)$
for an optimal solution $(\y^*, \W^*)$ of $(\DC)$
is the unique optimal solution of $(\PC)$.
Furthermore, from $(\y^*, \W^*) \in \LC$ and
Remark~\ref{rema:x-bound},
the optimal solution $\X^*$ satisfies
$\beta_{\sX}^{\min} \I \preceq \X^* \preceq \beta_{\sX}^{\max} \I$ and
$||\X^*|| \le \eta_{\sX}$.
\end{REMA}

From the definition in (\ref{eq:search-direction}),
$(\Delta \y^k, \Delta \W^k)$ depends on $\alpha^k$.
However,  the stopping criteria shown in Lemma~\ref{lemm:stop}
is practically independent of $\alpha^k$.
For the subsequent analysis, we introduce
$(\Delta \y^k_{(1)}, \Delta \W^k_{(1)})$ by setting $\alpha^k = 1$;
\begin{eqnarray}
(\Delta \y^k_{(1)}, \Delta \W^k_{(1)}) &:=&
  \P_{\widehat{\WC}}
  ((\y^k, \W^k) + \nabla g(\y^k, \W^k))
  - (\y^k, \W^k) \nonumber \\
&=& (\b - \AC(\X^k),
  [\W^k + \X^k]_{\le \bold{\rho}} - \W^k).
  \label{eq:search-direction1}
\end{eqnarray}
and we now investigate the relation between $(\Delta \y^k, \Delta \W^k)$
and $(\Delta \y^k_{(1)}, \Delta \W^k_{(1)})$.

\begin{LEMM}\label{lemm:direction1}
The search direction $(\Delta \y^k, \Delta \W^k)$ is
bounded by 
$(\Delta \y^k_{(1)}, \Delta \W^k_{(1)})$. More precisely, 
\begin{eqnarray}
\min\{1, \alpha_{\min}\} || (\Delta \y^k_{(1)}, \Delta \W^k_{(1)})||
  \le 
  || (\Delta \y^k, \Delta \W^k) ||
  \le 
\max\{1, \alpha_{\max}\} || (\Delta \y^k_{(1)}, \Delta \W^k_{(1)})||.
\label{eq:norm-bound}
  \end{eqnarray}
\end{LEMM}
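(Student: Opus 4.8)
The plan is to reduce the entire claim to the two monotonicity properties (P4) and (P5) of Proposition~\ref{prop:hager}, applied to the single scalar function
\[
\varphi(\alpha) := \left\| \P_{\widehat{\WC}}\big((\y^k, \W^k) + \alpha \nabla g(\y^k, \W^k)\big) - (\y^k, \W^k) \right\|,
\qquad \alpha > 0.
\]
By the definitions in (\ref{eq:search-direction}) and (\ref{eq:search-direction1}), this function interpolates exactly the two quantities of interest: $\varphi(\alpha^k) = \|(\Delta \y^k, \Delta \W^k)\|$ and $\varphi(1) = \|(\Delta \y^k_{(1)}, \Delta \W^k_{(1)})\|$. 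Thus the whole lemma is the assertion that $\varphi(\alpha^k)$ and $\varphi(1)$ are comparable up to the factors $\min\{1,\alpha_{\min}\}$ and $\max\{1,\alpha_{\max}\}$, using only $\alpha^k \in [\alpha_{\min}, \alpha_{\max}]$.

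First I would verify the hypotheses needed to invoke Proposition~\ref{prop:hager}. The set $\widehat{\WC} = \Real^m \times \WC$ is convex, and by Lemma~\ref{lemm:level} the anchor $(\y^k, \W^k)$ lies in $\FC \subseteq \widehat{\WC}$, so it is a feasible base point as required. Taking $f = -g$ (so that minimizing $-g$ matches the maximization of $g$ and $\P_S(\x - \alpha \nabla f(\x)) = \P_S(\x + \alpha \nabla g(\x))$), property (P4) gives that $\varphi(\alpha)$ is non-decreasing in $\alpha$, while property (P5) gives that $\varphi(\alpha)/\alpha$ is non-increasing in $\alpha$.

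Then I would split on whether $\alpha^k \ge 1$ or $\alpha^k \le 1$ and read off four inequalities. When $\alpha^k \ge 1$, (P4) yields $\varphi(\alpha^k) \ge \varphi(1)$ and (P5) yields $\varphi(\alpha^k) \le \alpha^k \varphi(1) \le \alpha_{\max}\varphi(1)$; when $\alpha^k \le 1$, (P4) yields $\varphi(\alpha^k) \le \varphi(1)$ and (P5) yields $\varphi(\alpha^k) \ge \alpha^k \varphi(1) \ge \alpha_{\min}\varphi(1)$. Since $\min\{1,\alpha_{\min}\} \le 1$ and $\min\{1,\alpha_{\min}\} \le \alpha_{\min}$, the lower bound of (\ref{eq:norm-bound}) holds in both cases; likewise $\max\{1,\alpha_{\max}\} \ge 1$ and $\max\{1,\alpha_{\max}\} \ge \alpha_{\max}$ deliver the upper bound.

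There is essentially no serious obstacle here: the content is entirely carried by the two monotonicity properties, and the remainder is bookkeeping over two cases. The one point deserving care — and the only place the argument could go wrong — is the correct identification needed to apply Proposition~\ref{prop:hager}: one must take $f = -g$ and confirm that the anchor $(\y^k, \W^k)$ genuinely belongs to the set $\widehat{\WC}$ onto which we project, which is exactly what Lemma~\ref{lemm:level} guarantees, since (P4) and (P5) require a feasible base point. Once this is in place, treating $\nabla g(\y^k, \W^k)$ as a fixed vector makes $\varphi$ a purely geometric projection-displacement function along a ray, so the domain of $g$ plays no role and the monotonicity is precisely what (P4)--(P5) assert.
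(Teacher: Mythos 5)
Your proof is correct and rests on the same two ingredients as the paper's, namely properties (P4) and (P5) of Proposition~\ref{prop:hager} applied with $f=-g$ and the base point $(\y^k,\W^k)\in\widehat{\WC}$, followed by the same case split on $\alpha^k\gtrless 1$. The only (cosmetic) difference is that you invoke (P4)--(P5) once for the joint projection onto the product set $\widehat{\WC}=\Real^m\times\WC$, whereas the paper handles the $\y$-block by the exact identity $\Delta\y^k=\alpha^k\Delta\y^k_{(1)}$ and applies (P4)--(P5) only to the $\W$-block before recombining the two norms; both routes are valid and yield the same constants.
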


\begin{proof}
It holds that $\Delta \y^k = \alpha^k \Delta \y_{(1)}^k$ from the definitions.
From (P4) of Proposition~\ref{prop:hager}, we know that  
$ ||\P_{\WC} (\W^k + \alpha \X^k) - \W^k||$ is non-decreasing
for $\alpha > 0$, therefore, it holds for the case $\alpha^k > 1$ that 
% Since we choose $\alpha^k$ from $[\alpha_{\min}, \alpha_{\max}]$,
$||\Delta \W^k|| = ||[\W^k + \alpha^k \X^k]_{\le \bold{\rho}} - \W^k||
\ge ||[\W^k + \X^k]_{\le \bold{\rho}} - \W^k|| = ||\Delta \W^k_{(1)}||$.
In addition, 
(P5) of Proposition~\ref{prop:hager} indicates that  
$ ||\P_{\WC} (\W^k + \alpha \X^k) - \W^k|| / \alpha $ is non-increasing
for $\alpha > 0$.
Since we choose $\alpha^k$ from $[\alpha_{\min}, \alpha_{\max}]$,
we have
$||\Delta \W^k|| = || [\W^k + \alpha^k \X^k]_{\le \bold{\rho}} - \W^k|| 
\ge  \alpha^k ||[\W^k + \X^k]_{\le \bold{\rho}} - \W^k|| 
\ge \alpha_{\min} ||[\W^k + \X^k]_{\le \bold{\rho}} - \W^k||
= \alpha_{\min} ||\Delta \W^k_{(1)}||$
for the case $\alpha^k \le 1$.
The combination of these two shows  the left inequality
of (\ref{eq:norm-bound}).
The right inequality is also derived from (P4) and (P5) in a similar way.

\end{proof}

The condition $||(\Delta \y^k, \Delta \W^k)|| > 0$ can be assumed without loss of generality,
since  $||(\Delta \y^k, \Delta \W^k)|| = 0$ indicates that $(\y^k, \W^k)$ 
is an optimal solution by Lemmas~\ref{lemm:stop} and \ref{lemm:direction1} 
and (\ref{eq:search-direction1}) and that Algorithm~\ref{algo:dspg} stops at 
Step~2.

Algorithm~\ref{algo:dspg} may terminate before computing an approximate solution with
a required accuracy in the following two cases:
(i) The step length $\lambda^k$ converges to 0 before 
$||(\Delta \y^k, \Delta \W^k)||$ reaches 0, and $(\y^k, \W^k)$ cannot proceed,
(ii) The norm of the search direction $||(\Delta \y^k, \Delta \W^k)||$ converges to 0 
before $g(\y^k, \W^k)$ reaches the optimal value $g^*$.
Lemmas~\ref{lemm:low-min} and \ref{lemm:liminf-obj} 
show that the two cases will not happen.  
For the proofs of the two lemmas,
we first discuss some inequalities related to matrix norms.

\begin{LEMM}\label{lemm:matrix-norm}
Suppose that  $ 0 < \widehat{\beta}^{\min} < \widehat{\beta}^{\max} < \infty$. 
For $\forall \X, \forall \Y \in
S_2 := \{ \X \in \SMAT^n :
\widehat{\beta}^{\min} \I \preceq \X \preceq \widehat{\beta}^{\max} \I\}$,
it holds 
\begin{enumerate}
  \item[(i)] $(\Y-\X) \bullet (\X^{-1} - \Y^{-1}) \ge
        \frac{1}{(\widehat{\beta}^{\max})^2} || \Y-\X ||^2$,
  \item[(ii)] $(\Y-\X) \bullet (\X^{-1} - \Y^{-1}) \ge
        (\widehat{\beta}^{\min})^2 || \Y^{-1}-\X^{-1} ||^2$,
  \item[(iii)] $|| \Y-\X || \ge
          (\widehat{\beta}^{\min})^2|| \Y^{-1}-\X^{-1} ||$.
\end{enumerate}
\end{LEMM}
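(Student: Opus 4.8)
The plan is to reduce all three inequalities to a single trace inequality together with the elementary resolvent identity $\X^{-1}-\Y^{-1} = \X^{-1}(\Y-\X)\Y^{-1}$, valid for any invertible $\X,\Y$. Writing $\D := \Y-\X$, this identity gives $\X^{-1}-\Y^{-1} = \X^{-1}\D\Y^{-1}$, so that, using $\W\bullet\V = \text{Tr}(\W\V)$, the common left-hand side of (i) and (ii) becomes $(\Y-\X)\bullet(\X^{-1}-\Y^{-1}) = \text{Tr}(\D\X^{-1}\D\Y^{-1})$. The heart of the matter is therefore the claim that for every symmetric $\D$ and all $\A,\B\succ\O$,
\[
\text{Tr}(\D\A\D\B) \ge \lambda_{\min}(\A)\,\lambda_{\min}(\B)\,||\D||^2 .
\]

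I expect this trace inequality to be the main obstacle, since $\A$ and $\B$ need not commute with $\D$ and $\text{Tr}(\D\A\D\B)$ is not obviously sign-definite. The device I would use is to peel off the minimum eigenvalue: write $\A = \lambda_{\min}(\A)\I + \A'$ and $\B = \lambda_{\min}(\B)\I + \B'$ with $\A',\B'\succeq\O$, and expand $\text{Tr}(\D\A\D\B)$ into four terms. The leading term is $\lambda_{\min}(\A)\lambda_{\min}(\B)\text{Tr}(\D^2) = \lambda_{\min}(\A)\lambda_{\min}(\B)||\D||^2$. Each of the remaining terms, namely (positive multiples of) $\text{Tr}(\D^2\B')$, $\text{Tr}(\A'\D^2)$ and $\text{Tr}(\D\A'\D\B')$, is nonnegative because it is the trace of a product of two positive semidefinite matrices: here $\D^2\succeq\O$ and $\D\A'\D = (\A'^{1/2}\D)^T(\A'^{1/2}\D)\succeq\O$, and $\text{Tr}(\P\Q)\ge 0$ whenever $\P,\Q\succeq\O$ (since $\text{Tr}(\P\Q)=\text{Tr}(\P^{1/2}\Q\P^{1/2})$). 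This yields the claim.

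With the trace inequality in hand, (i) and (ii) follow from suitable choices. For (i) I would apply it with $\A=\X^{-1}$ and $\B=\Y^{-1}$, using $\lambda_{\min}(\X^{-1})\lambda_{\min}(\Y^{-1}) = 1/(\lambda_{\max}(\X)\lambda_{\max}(\Y)) \ge 1/(\widehat{\beta}^{\max})^2$ on $S_2$. For (ii) I would set $\E := \X^{-1}-\Y^{-1} = \X^{-1}\D\Y^{-1}$, so that $\D = \X\E\Y$ and $\text{Tr}(\D\E) = \text{Tr}(\X\E\Y\E) = \text{Tr}(\E\Y\E\X)$; applying the trace inequality with $\A=\Y$, $\B=\X$ gives $(\Y-\X)\bullet(\X^{-1}-\Y^{-1})\ge\lambda_{\min}(\Y)\lambda_{\min}(\X)||\E||^2\ge(\widehat{\beta}^{\min})^2||\X^{-1}-\Y^{-1}||^2$, which is exactly (ii).

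Finally, (iii) follows from (ii) by the Cauchy--Schwarz inequality for the Frobenius inner product: $(\Y-\X)\bullet(\X^{-1}-\Y^{-1}) \le ||\Y-\X||\cdot||\X^{-1}-\Y^{-1}||$. Combining this with (ii) gives $(\widehat{\beta}^{\min})^2||\Y^{-1}-\X^{-1}||^2 \le ||\Y-\X||\cdot||\Y^{-1}-\X^{-1}||$; if $||\Y^{-1}-\X^{-1}||>0$ I divide through to obtain (iii), while if $||\Y^{-1}-\X^{-1}||=0$ then $\X=\Y$ and (iii) holds trivially. A clean alternative for (i) and (ii) worth noting is the integral form $\X^{-1}-\Y^{-1} = \int_0^1 \Z_t^{-1}\D\Z_t^{-1}\,dt$ with $\Z_t := (1-t)\X+t\Y \in S_2$, which rewrites the left-hand side as $\int_0^1 ||\Z_t^{-1/2}\D\Z_t^{-1/2}||^2\,dt$ and makes the eigenvalue bounds transparent; the elementary splitting above, however, avoids calculus entirely.
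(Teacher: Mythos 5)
Your proof is correct, but it takes a genuinely different route from the paper's. The paper obtains (i) by invoking the strong convexity of $f_2(\X)=-\log\det(\X)$ on $S_2$ with modulus $\frac{1}{(\widehat{\beta}^{\max})^2}$ (cited from d'Aspremont et al.), writing the strong-convexity inequality twice with the roles of $\X$ and $\Y$ swapped, and adding; (ii) is then the same argument applied to $\X^{-1},\Y^{-1}$, which live in the set where eigenvalues are bounded above by $1/\widehat{\beta}^{\min}$; (iii) is Cauchy--Schwarz on (ii), exactly as you do. You instead bypass convexity entirely: the resolvent identity $\X^{-1}-\Y^{-1}=\X^{-1}(\Y-\X)\Y^{-1}$ reduces both (i) and (ii) to the trace inequality $\text{Tr}(\D\A\D\B)\ge\lambda_{\min}(\A)\lambda_{\min}(\B)\|\D\|^2$, which you prove from scratch by splitting off $\lambda_{\min}\I$ from each of $\A$ and $\B$ and checking that the three remainder terms are traces of products of positive semidefinite matrices. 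All steps check out (in particular $\D\A'\D\succeq\O$ and $\text{Tr}(\P\Q)\ge 0$ for $\P,\Q\succeq\O$ are used correctly), and the constants you obtain match the paper's exactly. What your approach buys is self-containedness: the paper's proof rests on an external strong-convexity estimate, whereas yours is elementary linear algebra; your closing remark about the integral representation $\X^{-1}-\Y^{-1}=\int_0^1\Z_t^{-1}\D\Z_t^{-1}\,dt$ is in effect the reason that external estimate is true, so you have also identified why the two routes give the same constants. What the paper's approach buys is brevity, at the cost of a citation.
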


\begin{proof}
From the discussions of \cite{dASPREMONT08}, the function $f_2 (\X) = -\log \det (\X) $ is
strongly convex  with the convexity parameter
$\frac{1}{2(\widehat{\beta}^{\max})^2}$ on the set $S_2$.
Therefore, it holds that 
\begin{eqnarray}
  f_2(\Y) \ge f_2(\X) + \nabla f_2(\X) \bullet (\Y - \X)
  + \frac{1}{2(\widehat{\beta}^{\max})^2} || \Y - \X||^2 \label{eq:logdet}
  \end{eqnarray}
for $\forall \X, \forall \Y \in S_2$.
By swapping $\X$ and $\Y$, we also have
\begin{eqnarray*}
  f_2(\X) \ge f_2(\Y) + \nabla f_2(\Y) \bullet (\X - \Y)
  + \frac{1}{2(\widehat{\beta}^{\max})^2} || \X - \Y||^2.
  \end{eqnarray*}
Since $\nabla f_2(\X) = -\X^{-1}$, adding these two inequalities
generates (i). When we use
$\X^{-1}, \Y^{-1} \in \{\X :
\frac{1}{\widehat{\beta}^{\max}} \I \preceq \X
\preceq \frac{1}{\widehat{\beta}^{\min}} \I\}$, 
we obtain (ii) in a similar way to (i).
Finally,  an application of the Cauchy-Schwartz inequality to (ii) lead to 
\begin{eqnarray*}
  (\widehat{\beta}^{\min})^2 || \Y^{-1}-\X^{-1} ||^2 \le (\Y-\X) \bullet (\X^{-1} - \Y^{-1})
  \le ||\Y-\X || \cdot ||\X^{-1} - \Y^{-1}||.
\end{eqnarray*}
If $\X \ne \Y$, (iii) is obtained by dividing the both sides with
$||\X^{-1} - \Y^{-1}||$, meanwhile if $\X = \Y$, (iii) is obvious.
\end{proof}

\begin{LEMM}\label{lemm:low-min}
The step length $\lambda^k$ of Algorithm~\ref{algo:dspg}
has a lower bound,
\begin{eqnarray*}
  \lambda^k \ge
  \min\left\{ \overline{\lambda}_{\min},
    \frac{2\sigma_1(1-\gamma)}{L{\scriptsize \alpha}_{\max}} \right\}
\end{eqnarray*}
where
$\overline{\lambda}_{\min} :=
  \min\left\{1,
\frac{\beta_{\sZ}^{\min}\tau}{\eta_{\Delta \sW} + ||\AC^T|| \eta_{\Delta \y}}\right\}$
and $L := \frac{ \mu \sqrt{2(||\AC||^2+1)}\max\{1,||\AC||\}}
{((1-\tau)\beta_{\sZ}^{\min})^2}$.
\end{LEMM}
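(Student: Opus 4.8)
The plan is to bound $\lambda^k$ from below by separately controlling (a) the initial trial length $\overline{\lambda}^k = \lambda_1^k$ produced in Step~3, and (b) the smallest length that the non-monotone backtracking of Step~3b can reach before the acceptance test (\ref{eq:g-condition}) is necessarily satisfied. Since $\lambda^k$ equals $\overline{\lambda}^k$ when $j=1$ is accepted, and otherwise is at least $\sigma_1$ times the last rejected trial, the two estimates combine into $\min\{\overline{\lambda}_{\min},\, 2\sigma_1(1-\gamma)/(L\alpha_{\max})\}$.

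For (a) I would bound $|\theta|$ in the case $\theta<0$. Writing $\Z^k := \C + \W^k - \AC^T(\y^k) = \L\L^T$, Remark~\ref{rema:x-bound} gives $\lambda_{\min}(\Z^k) \ge \beta_{\sZ}^{\min}$, hence $||\L^{-1}||^2 = 1/\lambda_{\min}(\Z^k) \le 1/\beta_{\sZ}^{\min}$. Since $|\theta|$ is at most the spectral (hence Frobenius) norm of $\L^{-1}(\Delta\W^k - \AC^T(\Delta\y^k))\L^{-T}$, submultiplicativity together with Remark~\ref{rema:yw-bound} yields $|\theta| \le (\eta_{\Delta\sW} + ||\AC^T||\,\eta_{\Delta\y})/\beta_{\sZ}^{\min}$. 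Consequently $-\tau/\theta \ge \tau\beta_{\sZ}^{\min}/(\eta_{\Delta\sW}+||\AC^T||\,\eta_{\Delta\y})$, so by (\ref{eq:overlabmda}) we get $\overline{\lambda}^k \ge \overline{\lambda}_{\min}$.

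The heart of (b) is identifying the Lipschitz modulus of $\nabla g$ along the search segment. For $\lambda \in [0,\overline{\lambda}^k]$ the factorization $\C + \W^k + \lambda\Delta\W^k - \AC^T(\y^k + \lambda\Delta\y^k) = \L(\I + \lambda\L^{-1}(\Delta\W^k - \AC^T(\Delta\y^k))\L^{-T})\L^T$ and the definition of $\overline{\lambda}^k$ through $\theta$ and $\tau$ force every eigenvalue $1 + \lambda\theta_i \ge 1-\tau$, so the whole segment stays in $\{\Z : \Z \succeq (1-\tau)\beta_{\sZ}^{\min}\I\}$ (a finite upper bound also holds). On this region I apply Lemma~\ref{lemm:matrix-norm}(iii) with $\widehat{\beta}^{\min} = (1-\tau)\beta_{\sZ}^{\min}$ to the matrices $\Z_i = \mu\X_i^{-1}$: using $\nabla g(\y_1,\W_1) - \nabla g(\y_2,\W_2) = (-\AC(\X_1 - \X_2),\, \X_1 - \X_2)$, the estimate $||\nabla g(\y_1,\W_1)-\nabla g(\y_2,\W_2)|| \le \sqrt{||\AC||^2+1}\,||\X_1-\X_2||$, the identity $\X_1 - \X_2 = \mu(\Z_1^{-1}-\Z_2^{-1})$, and $||\Z_1 - \Z_2|| \le \sqrt{2}\,\max\{1,||\AC||\}\,||(\y_1,\W_1)-(\y_2,\W_2)||$, I recover exactly the modulus $L$ in the statement.

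Finally I run the standard non-monotone Armijo estimate. If a trial $\lambda_j^k$ is rejected, then since the reference value $\min_{0\le h\le\min\{k,M-1\}} g(\y^{k-h},\W^{k-h})$ is at most $g(\y^k,\W^k)$, inequality (\ref{eq:g-condition}) fails already against $g(\y^k,\W^k)$, giving $g(\y_+,\W_+) - g(\y^k,\W^k) < \gamma\lambda_j^k\,\nabla g(\y^k,\W^k)\bullet(\Delta\y^k,\Delta\W^k)$. Combining this with the descent estimate $g(\y_+,\W_+) - g(\y^k,\W^k) \ge \lambda_j^k\,\nabla g(\y^k,\W^k)\bullet(\Delta\y^k,\Delta\W^k) - \tfrac{L}{2}(\lambda_j^k)^2||(\Delta\y^k,\Delta\W^k)||^2$ (valid from the segment Lipschitz bound) and with the ascent inequality $\nabla g(\y^k,\W^k)\bullet(\Delta\y^k,\Delta\W^k)\ge \tfrac{1}{\alpha^k}||(\Delta\y^k,\Delta\W^k)||^2 \ge \tfrac{1}{\alpha_{\max}}||(\Delta\y^k,\Delta\W^k)||^2$ from (\ref{eq:inequality-g}), I divide by $||(\Delta\y^k,\Delta\W^k)||^2>0$ to conclude $\lambda_j^k > 2(1-\gamma)/(L\alpha_{\max})$. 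Hence no trial at or below $2(1-\gamma)/(L\alpha_{\max})$ is ever rejected, so the accepted $\lambda^k$ is at least $\sigma_1$ times that threshold when $j\ge2$, and at least $\overline{\lambda}_{\min}$ when $j=1$; taking the minimum gives the claim. The main obstacle is the segment Lipschitz step: securing a single constant $L$ over the entire backtracking interval, which rests on the uniform lower eigenvalue bound $(1-\tau)\beta_{\sZ}^{\min}$ engineered into $\overline{\lambda}^k$.
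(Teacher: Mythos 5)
Your proof is correct and follows essentially the same route as the paper: a lower bound on $\overline{\lambda}^k$ via $\beta_{\sZ}^{\min}$ and the bounds $\eta_{\Delta\sW},\eta_{\Delta\y}$, the uniform eigenvalue bound $(1-\tau)\beta_{\sZ}^{\min}$ on the backtracking segment feeding Lemma~\ref{lemm:matrix-norm}(iii) to get the Lipschitz constant $L$, and the standard non-monotone Armijo estimate combined with (\ref{eq:inequality-g}). The only (immaterial) variation is that you bound $|\theta|$ directly by submultiplicativity of the spectral norm, whereas the paper reaches the same estimate by characterizing the interval on which $\Z(\lambda)\succeq\O$.
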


\begin{proof}
We first show the lower bound of $\overline{\lambda}^k$ of Step 3.
Since $\overline{\lambda}^k$ is determined by (\ref{eq:overlabmda}),
we examine a bound of $\lambda$ such that
$\Z(\lambda) := \C + (\W^k + \lambda \Delta \W^k) - \AC^T(\y^k + \lambda \Delta \y^k)
\succeq \O$.
It follows from Remark~\ref{rema:x-bound}
that $\mu(\X^k)^{-1} \succeq \beta_{\sZ}^{\min} \I$.
From Remark~\ref{rema:yw-bound}, we also have
$||\Delta \y^k|| \le \eta_{\Delta \y}$
and $||\Delta \W^k|| \le \eta_{\Delta \W}$.
Therefore, we obtain
\begin{eqnarray}
  \Z(\lambda)
&=& \mu (\X^k)^{-1} + \lambda (\Delta \W^k - \AC^T(\Delta \y^k))
\nonumber \\
  &\succeq&  \beta_{\sZ}^{\min} \I
  - \lambda (\eta_{\Delta \W} + ||\AC^T|| \eta_{\Delta \y}) \I.
  \label{eq:Z_labmda}
\end{eqnarray}
Hence, for any
$\lambda \in \left[ 0, \frac{\beta_{\sZ}^{\min}}{\eta_{\Delta \W} + ||\AC^T||
\eta_{\Delta \y}} \right]$,
we have
$\Z(\lambda) \succeq \O$,
and consequently, we obtain
$\overline{\lambda}^k \ge \overline{\lambda}_{\min}$.
% $\overline{\lambda}^k \ge \min\left\{1,
% \frac{\beta_{\Z}^{\min}\tau}{\eta_{\Delta \W} + ||\AC^T|| \eta_{\Delta
% \y}}\right\}$ in the case $\theta < 0$,
% Next, we consider lower and upper bounds of
% $\Z(\lambda)$ for $\lambda \in (0, \overline{\lambda}^k)$.
% Since $\overline{\lambda}^k \le 1$ from its definition,
% \begin{eqnarray*}
%  ||\Z(\lambda)|| &=&
%  ||\mu (\X^k)^{-1} + \lambda (\Delta \W^k + \AC^T(\Delta \y^k))||
%  \le \mu \eta_{\X^{-1}}  + \eta_{\Delta \W} + ||\AC^T||\eta_{\Delta \y},
% \end{eqnarray*}
% and we obtain the upper bound
% $\Z(\lambda) \preceq
% \frac{\mu \eta_{\X^{-1}}  + \eta_{\Delta \W} + ||\AC^T||\eta_{\Delta \y}}{\sqrt{n}}
% \I$.

If $\theta$ of (\ref{eq:overlabmda}) is non-negative,
$\Z(\lambda) \succeq \Z(0) \succeq (1-\tau) \Z(0)$.
In the case  $\theta < 0$, we have $\overline{\lambda}^k \ge -\frac{1}{\theta} \times \tau$, and this leads to 
$\Z(\lambda) \succeq (1-\tau) \Z(0)$ for $\lambda\in[0,\overline{\lambda}^k]$. Therefore, 
$\Z(\lambda) \succeq (1-\tau) \Z(0) \succeq (1-\tau)\beta_{\sZ}^{\min} \I$
for $\lambda \in [0, \overline{\lambda}^k]$.
%\green{We also have
%$\Z(\lambda) \preceq \beta_{\Z}^{\max} \I
%+ (\eta_{\Delta \W} + ||\AC^T||\eta_{\Delta \y}) \I$ for
%$0 \le \lambda \le 1$.}
Hence,  
it follows from (iii) of Lemma~\ref{lemm:matrix-norm} that 
\begin{eqnarray*}
  ||\Z(\lambda)^{-1} - \Z(0)^{-1}|| \le
  \frac{||\Z(\lambda) - \Z(0)||}{((1-\tau)\beta_{\sZ}^{\min})^2}
  \quad \mbox{for} \quad \lambda \in [0, \overline{\lambda}^k].
\end{eqnarray*}
% or equivalentaly,
% \begin{eqnarray*}
%  ||(\C +
%  (\W^k + \lambda \Delta \W^k) - \AC^T(\y^k + \lambda \Delta
%  \y^k))^{-1}
%  - (\C +
%  \W^k - \AC^T(\y^k))^{-1}
%  || \le \xi_{\Z}
%  \lambda ||\Delta \W^k - \AC^T(\Delta \y^k)|| \\
%  \quad \mbox{for} \quad \lambda \in (0, \overline{\lambda}^k).
% \end{eqnarray*}
Hence, we acquire some Lipschitz continuity on $\nabla g$
for the direction $(\Delta \y^k, \Delta \W^k)$.
For $\lambda \in [0, \overline{\lambda}^k]$, we have
\begin{eqnarray}
  & & || \nabla g(\y^k + \lambda \Delta \y^k,
  \W^k + \lambda \Delta \W^k)
  - \nabla g(\y^k, \W^k) ||  \nonumber \\
  & = &
  \left|\left|
      \left( \b - \AC(\mu\Z(\lambda)^{-1}), \mu \Z(\lambda)^{-1}) \right) 
      - \left( \b - \AC(\mu\Z(0)^{-1}), \mu \Z(0)^{-1}) \right)
      \right| \right|
  \nonumber \\
  & = & 
  \mu \left|\left|
      \left(-\AC(\Z(\lambda)^{-1})+\AC(\Z(0)^{-1}),
        \Z(\lambda)^{-1} - \Z(0)^{-1} \right) \right| \right|
  \nonumber \\
  & \le &
  \mu \sqrt{||\AC||^2 + 1} || \Z(\lambda)^{-1} -  \Z(0)^{-1}||
  \nonumber \\
  & \le &
  \frac{ \mu \sqrt{||\AC||^2 + 1} }{((1-\tau)\beta_{\sZ}^{\min})^2}
  ||\Z(\lambda) - \Z(0) || \nonumber \\
  & = &
  \frac{\lambda \mu \sqrt{||\AC||^2 + 1} }{((1-\tau)\beta_{\sZ}^{\min})^2}
  ||\Delta \W^k - \AC^T(\Delta \y^k)|| \nonumber \\
  & \le &
  \frac{\lambda \mu \sqrt{2(||\AC||^2 + 1)} \max\{1,||\AC||\}}
  {((1-\tau)\beta_{\sZ}^{\min})^2}
  ||(\Delta \y^k, \Delta \W^k)|| \nonumber \\
  & = & 
  \lambda L ||(\Delta \y^k, \Delta \W^k)||,
\label{eq:g-contium}
\end{eqnarray}  
Here, we have used  the inequalities
$||\Delta \W^k - \AC^T(\Delta \y^k)|| \le
||\Delta \W^k|| + ||\AC^T || \cdot ||\Delta \y^k||$ 
and $||\Delta \W^k||+ ||\Delta \y^k|| \le
\sqrt{2} || (\Delta \y^k, \Delta \W^k)||$.

%Then, we consider
We examine how the inner loop, Step 3 of Algorithm 2.1, is executed.
As in the Armijo rule, the inner loop
terminates at a finite number of inner iterations.
If (\ref{eq:g-condition}) is satisfied at $j=1$, then
%If the first trial succeeds, \blue{then} 
$\lambda^k = \overline{\lambda}^k \ge \overline{\lambda}_{\min}$.
If (\ref{eq:g-condition}) is satisfied at $j \geq 2$, 
then (\ref{eq:g-condition}) is not satisfied  at $j-1$. Thus, we have
%When the $j$th trial ($j \ge 2$) is the first success,
%we consider the situation of the $(j-1)$th trial.
%Since the $(j-1)$th trial does not satisfy (\ref{eq:g-condition}), ???
\begin{eqnarray*}
  & &
  g(\y^k + \lambda_{j-1}^k \Delta \y^k,
  \W^k + \lambda_{j-1}^k \Delta \W^k) \\
  &<& \min_{0 \le h \le \min\{k, M-1\}} g(\y^{k-h}, \W^{k-h}) +
  \gamma 
  \lambda_{j-1}^k
  \nabla g (\y^k, \W^k) \bullet (\Delta \y^k, \Delta \W^k)
  \\
  &\le& 
  g(\y^{k}, \W^{k}) +
  \gamma 
  \lambda_{j-1}^k
  \nabla g (\y^k, \W^k) \bullet (\Delta \y^k, \Delta \W^k).
\end{eqnarray*}

From Taylor's expansion and (\ref{eq:g-contium}),
%Meanwhile, 
it follows that 
\begin{eqnarray*}
  & & g(\y^k + \lambda_{j-1}^k \Delta \y^k,
  \W^k + \lambda_{j-1}^k \Delta \W^k)
- g(\y^{k}, \W^{k}) \\
&=& \lambda_{j-1}^k
\nabla g (\y^k, \W^k) \bullet (\Delta \y^k, \Delta \W^k) \\
& & +
  \int_0^{\lambda_{j-1}^k}
  \left( 
\nabla g(\y^k + \lambda \Delta \y^k,
  \W^k + \lambda \Delta \W^k)
  - \nabla g(\y^k, \W^k) \right)
  \bullet (\Delta \y^k, \Delta \W^k) d\lambda \\
&\ge&  \lambda_{j-1}^k
\nabla g (\y^k, \W^k) \bullet (\Delta \y^k, \Delta \W^k)
- \frac{(\lambda_{j-1}^k)^2 L}{2} ||(\Delta \y^k, \Delta \W^k)||^2,
\end{eqnarray*}
since $\lambda^k_{j-1}\leq \overline{\lambda}^k$.
Combining these two inequalities, we obtain
$\lambda_{j-1}^k \ge \frac{2(1-\gamma)}{L}
\frac{\nabla g (\y^k, \W^k) \bullet (\Delta \y^k, \Delta \W^k)}
{||(\Delta \y^k, \Delta \W^k)||^2}$.
  It follows from (\ref{eq:inequality-g}) that 
\begin{eqnarray}
\frac{\nabla g (\y^k, \W^k) \bullet (\Delta \y^k, \Delta \W^k)}
{||(\Delta \y^k, \Delta \W^k)||^2} \ge \frac{1}{\alpha^k}
\ge \frac{1}{\alpha_{\max}}. \label{eq:min_nabla_g}
\end{eqnarray}
Since $\lambda_j^k$  is chosen from
$[\sigma_1 \lambda_{j-1}^k, \sigma_2 \lambda_{j-1}^k]$,
we obtain
$\lambda^k = \lambda_j^k \ge
\frac{2\sigma_1(1-\gamma)}{L \alpha_{\max}}$.
% when $j \ge 2$ is the first success iteration.
\end{proof}

%Using the above preparation, 
We now   prove that 
the search direction generated by Algorithm~\ref{algo:dspg}
shrinks to zero in the infinite iterations.
\begin{LEMM}\label{lemm:liminf-delta}
Algorithm~\ref{algo:dspg} with $\epsilon = 0$ stops in a finite number of iterations attaining the
optimal value $g^*$, or
the infimum of the norm of the search direction tends to
zero as $k $ increases,
\begin{eqnarray*}
  % \liminf_{k\to \infty} ||\Delta \y^k, \Delta \W^k|| = 0 \quad
  %  \mbox{and} \quad 
  \liminf_{k\to \infty} ||(\Delta \y^k_{(1)}, \Delta \W^k_{(1)})|| = 0.
  \end{eqnarray*}
\end{LEMM}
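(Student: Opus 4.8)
The plan is to argue by contradiction, assuming that Algorithm~\ref{algo:dspg} with $\epsilon=0$ does not terminate in finitely many iterations, so that an infinite sequence $\{(\y^k,\W^k)\}\subset\LC$ is produced. (If it does terminate at Step~1, then $\|(\Delta\y^k_{(1)},\Delta\W^k_{(1)})\|_\infty=0$, and Lemmas~\ref{lemm:stop} and~\ref{lemm:direction1} identify $(\y^k,\W^k)$ as optimal, giving the first alternative.) Suppose, contrary to the claim, that $\liminf_{k\to\infty}\|(\Delta\y^k_{(1)},\Delta\W^k_{(1)})\|>0$. Then there are $\delta>0$ and $K$ with $\|(\Delta\y^k_{(1)},\Delta\W^k_{(1)})\|\ge\delta$ for all $k\ge K$. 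By Lemma~\ref{lemm:direction1} this yields $\|(\Delta\y^k,\Delta\W^k)\|\ge\min\{1,\alpha_{\min}\}\delta$, and then~(\ref{eq:inequality-g}) together with $\alpha^k\le\alpha_{\max}$ gives $\nabla g(\y^k,\W^k)\bullet(\Delta\y^k,\Delta\W^k)\ge(\min\{1,\alpha_{\min}\}\delta)^2/\alpha_{\max}$ for all $k\ge K$. Combining this with the uniform lower bound $\lambda^k\ge\overline{\lambda}:=\min\{\overline{\lambda}_{\min},\,2\sigma_1(1-\gamma)/(L\alpha_{\max})\}$ from Lemma~\ref{lemm:low-min}, the sufficient-increase term in the line search~(\ref{eq:g-condition}) is bounded below by a single positive constant $c:=\gamma\overline{\lambda}(\min\{1,\alpha_{\min}\}\delta)^2/\alpha_{\max}>0$ for every $k\ge K$.

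Next I would track the worst objective value over the line-search window. Define $V_k:=\min_{0\le h\le\min\{k,M-1\}}g(\y^{k-h},\W^{k-h})$, which is exactly the quantity appearing on the right-hand side of~(\ref{eq:g-condition}). Because every iterate lies in $\FC$ and $g(\y,\W)\le g^*<\infty$ there, $V_k$ is bounded above by $g^*$. Moreover $V_k$ is non-decreasing for $k\ge M-1$: the acceptance rule~(\ref{eq:g-condition}) and the nonnegativity of the ascent term in~(\ref{eq:inequality-g}) give $g(\y^{k+1},\W^{k+1})\ge V_k$, while the remaining $M-1$ members of the window defining $V_{k+1}$ already belong to the window defining $V_k$ and hence are $\ge V_k$; taking the minimum yields $V_{k+1}\ge V_k$. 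Thus $\{V_k\}$ converges to a finite limit.

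The decisive step is to show that, under the contradiction hypothesis, $V_k$ increases by at least $c$ over every block of $M$ iterations, which is incompatible with convergence. For any index $i$ with $i-1\ge K$, applying~(\ref{eq:g-condition}) at $k=i-1$ and the bound $c$ gives $g(\y^{i},\W^{i})\ge V_{i-1}+c$; since $V$ is non-decreasing this forces $g(\y^{i},\W^{i})\ge V_k+c$ whenever $i-1\ge k\ge M-1$. Taking $k\ge\max\{K,M-1\}$ and letting $i$ range over the window $\{k+1,\dots,k+M\}$ defining $V_{k+M}$, each such $g(\y^{i},\W^{i})\ge V_k+c$, so $V_{k+M}=\min_{k+1\le i\le k+M}g(\y^{i},\W^{i})\ge V_k+c$. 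Iterating this bound forces $V_{k+jM}\ge V_k+jc\to\infty$, contradicting $V_k\le g^*$. Hence $\liminf_{k\to\infty}\|(\Delta\y^k_{(1)},\Delta\W^k_{(1)})\|=0$, as claimed.

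I expect the only delicate point to be the bookkeeping for the non-monotone window: one must simultaneously exploit that $V_k$ is non-decreasing and that each single-step gain propagates through the $\min$ defining $V_{k+M}$. The remaining ingredients---the uniform step-length bound from Lemma~\ref{lemm:low-min}, the direction-norm equivalence from Lemma~\ref{lemm:direction1}, and the ascent estimate~(\ref{eq:inequality-g})---enter only as uniform constants and require no further work.
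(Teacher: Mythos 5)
Your proposal is correct and follows essentially the same route as the paper's proof: argue by contradiction, combine the ascent estimate (\ref{eq:inequality-g}), the norm equivalence of Lemma~\ref{lemm:direction1}, and the uniform step-length bound of Lemma~\ref{lemm:low-min} to obtain a fixed positive gain per accepted step, and then show the non-monotone window minimum increases by that amount every $M$ iterations, contradicting the upper bound $g^*$. The only cosmetic difference is that you track a sliding window minimum $V_k$ while the paper uses block minima $g_\ell^{\min}$ over consecutive blocks of length $M$; the underlying argument is identical.
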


\begin{proof}
When Algorithm~\ref{algo:dspg} stops in a finite number of iterations, 
the optimality is guaranteed by Lemma~\ref{lemm:stop}.
From Lemma~\ref{lemm:direction1}, it is 
sufficient to prove
$\liminf_{k\to \infty} ||(\Delta \y^k, \Delta \W^k)|| = 0$,
Suppose, to contrary, that there exist $\delta > 0$ and an integer
$k_0$ such that $||(\Delta \y^k, \Delta \W^k)|| > \delta$
for $\forall k \ge k_0$.
Let us denote $g_k := g(\y^k, \W^k)$
and
$g_{\ell}^{\min} :=
\min\{g_{\ell M + 1}, \ldots, g_{(\ell+1) M}\}$.
It follows from Lemma~\ref{lemm:low-min}, 
(\ref{eq:g-condition}) and (\ref{eq:min_nabla_g}) that
\begin{eqnarray*}
g_{k+1} \ge
  \min\{g_{k}, \ldots, g_{k-M+1}\} +
  \widehat{\delta}
\quad \mbox{for} \quad \forall k \ge \max\{k_0,M\},
\end{eqnarray*}
where
$\widehat{\delta} = \gamma \min\{\overline{\lambda}_{\min},
\frac{2 \sigma_1 (1-\gamma)}{L \alpha_{\max}} \}
\frac{\delta^2}{\alpha_{\max}}$.

When $\ell$ is an integer such that $\ell > \frac{\max\{k_0,M\}}{M}$, 
we have
\begin{eqnarray*}
  g_{(\ell+1)M + 1}
  \ge \min\{
  g_{(\ell+1)M}, \ldots g_{(\ell+1)M - M + 1}
  \} + \widehat{\delta}
  = g_{\ell}^{\min} + \widehat{\delta}.
\end{eqnarray*}
By induction, for $j=2,\ldots,M$,
\begin{eqnarray*}
  g_{(\ell+1)M + j}
  \ge \min\{
  g_{(\ell+1)M + j-1}, \ldots g_{(\ell+1)M - M + j}
  \} + \widehat{\delta} 
  \ge \min\{
  g_{\ell}^{\min} + \widehat{\delta},
    g_{\ell}^{\min}\}  + \widehat{\delta}
  = g_{\ell}^{\min} + \widehat{\delta}.
\end{eqnarray*}
Therefore, we obtain
\begin{eqnarray*}
g_{\ell+1}^{\min} =
\min\{g_{(\ell+1) M + 1}, \ldots, g_{(\ell+1) M + M}\}  
\ge g_{\ell}^{\min} + \widehat{\delta}.
\end{eqnarray*}
From Lemma~\ref{lemm:level}, we know $ g(\y^0, \W^0) \le g_k \le g^*$ for each $k$.
Starting from an integer $\ell_0$ such that $\ell_0 > \frac{\max\{k_0,M\}}{M}$,
it follows that
\begin{eqnarray*}
  g^* \ge g_{\ell}^{\min} \ge g_{\ell_0}^{\min}
  + (\ell - \ell_0) \widehat{\delta}
  \ge g(\y^0, \W^0)
  + (\ell - \ell_0) \widehat{\delta}
  \quad \mbox{for} \quad \ell \ge \ell_0.
\end{eqnarray*}
When we take large $\ell$ such that $\ell > \ell_0 +
(g^* - g(\y^0, \W^0))/\widehat{\delta}$,
we have a contradiction. This completes the proof.

\end{proof}

%To establish
 For the proof of the main theorem, we further investigate the behavior of the objective
function in Lemma~\ref{lemm:liminf-obj}, which requires 
% but Lemma~\ref{lemm:liminf-obj} requires this short lemma.
Lemma \ref{lemm:short}. 
We use a matrix $\U^k \in \SMAT^n$ defined by
$U_{ij}^k := \rho_{ij} |X_{ij}^k| - W_{ij}^k X_{ij}^k$, % and set 
% $\V^k := [\W^k + \X^k]_{\le \bold{\rho}} - \W^k$
and  %we denote 
$\rho^{\max} := \max \{\rho_{ij} : i,j = 1,\ldots, n\}$.
The notation $[\Delta \W_{(1)}^k]_{ij}$ denotes the $(i,j)$th element of 
$\Delta \W_{(1)}^k = [\W^k + \X^k]_{\le \rho} - \W^k$.

\begin{LEMM}\label{lemm:short}
It holds that
\begin{eqnarray}
  |\U^k| \le \max\{2 \rho^{\max}, \eta_{\sX} \} |\Delta \W_{(1)}^k|.
  \label{eq:UV}
\end{eqnarray}

\end{LEMM}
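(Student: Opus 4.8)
The plan is to prove (\ref{eq:UV}) entrywise, which is legitimate because both $|\U^k|$ and $|\Delta \W_{(1)}^k|$ are matrices of element-wise absolute values and the asserted bound is coordinatewise. So I fix an index pair $(i,j)$ and abbreviate $x := X_{ij}^k$, $w := W_{ij}^k$, $\rho := \rho_{ij}$, and $\delta := [\Delta \W_{(1)}^k]_{ij} = \min\{\max\{w+x,-\rho\},\rho\} - w$, and aim to show $|U_{ij}^k| = |\rho|x| - wx| \le \max\{2\rho^{\max},\eta_{\sX}\}\,|\delta|$. Two facts will be used throughout: first, $|w| \le \rho$, since $\W^k \in \WC$ by Lemma~\ref{lemm:level}; second, $|x| \le \eta_{\sX}$, since $||\X^k|| \le \eta_{\sX}$ by Remark~\ref{rema:x-bound} and each entry is dominated by the Frobenius norm.

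The argument is then a case analysis on the sign of $x$, subdivided by whether the clipping operator $[\cdot]_{\le\bold{\rho}}$ is active on that entry. The case $x=0$ is immediate since $U_{ij}^k = 0$. For $x>0$ I would write $U_{ij}^k = x(\rho - w) \ge 0$ (using $w \le \rho$). If $w+x \le \rho$, the projection is inactive, so $\delta = x$, and hence $|U_{ij}^k| = (\rho-w)\,|\delta| \le 2\rho\,|\delta|$ using $-w \le \rho$. If instead $w+x > \rho$, the projection is active, so $\delta = \rho - w$, and hence $|U_{ij}^k| = x(\rho-w) = x\,|\delta| \le \eta_{\sX}\,|\delta|$ using $x = |x| \le \eta_{\sX}$. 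In either subcase the bound is at most $\max\{2\rho^{\max},\eta_{\sX}\}\,|\delta|$ after replacing $\rho$ by $\rho^{\max}$.

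The case $x<0$ is symmetric: here $U_{ij}^k = -x(\rho+w) \ge 0$ (using $w \ge -\rho$), and since $w+x < \rho$ the upper clip is always inactive, so one splits on the lower clip. If $w+x \ge -\rho$ then $\delta = x$ and $|U_{ij}^k| = (\rho+w)|\delta| \le 2\rho\,|\delta|$; if $w+x < -\rho$ then $\delta = -\rho - w$, so $|\delta| = \rho+w$ and $|U_{ij}^k| = |x|(\rho+w) = |x|\,|\delta| \le \eta_{\sX}\,|\delta|$. Combining all cases yields (\ref{eq:UV}). I expect no genuine analytic difficulty here; the only place requiring care is the bookkeeping of the projection operator, namely correctly identifying which clipping bound is attained in each sign regime so that $\delta$ is evaluated properly, and then matching the surviving factor—$\rho\pm w$ (bounded by $2\rho^{\max}$) when the projection is inactive, versus $|x|$ (bounded by $\eta_{\sX}$) when it is active—to the correct constant. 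The degenerate situations $w=\rho$ (for $x>0$) or $w=-\rho$ (for $x<0$) force $\delta = 0$ and $U_{ij}^k = 0$ simultaneously, so they are absorbed into the active subcase without separate treatment.
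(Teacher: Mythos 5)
Your proof is correct and follows essentially the same route as the paper's: an entrywise case analysis on the sign of $X_{ij}^k$, subdivided by whether the clipping in $[\W^k+\X^k]_{\le\bold{\rho}}$ is active, bounding the surviving factor by $2\rho^{\max}$ in the inactive subcase and by $\eta_{\sX}$ in the active one. The only cosmetic differences are that you write out the $x<0$ case explicitly (the paper says ``similarly'') and fold the degenerate $w=\rho$ subcase into the active branch rather than listing it separately.
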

\begin{proof}
We investigate the inequality by dividing into  three cases.
\begin{enumerate}
  \item Case $X_{ij}^k = 0$: We have $U_{ij}^k = 0$,
    hence (\ref{eq:UV}) holds.
  \item Case $X_{ij}^k > 0$: We have
    $U_{ij}^k = (\rho_{ij} - W_{ij}^k) X_{ij}^k \ge 0$.
    \begin{enumerate}
    \item Case $W_{ij}^k = \rho_{ij}$: We have
          $U_{ij}^k = 0$,
          hence (\ref{eq:UV}) holds.
    \item Case $W_{ij}^k < \rho_{ij}$:
          If $W_{ij}^k + X_{ij}^k \le \rho_{ij}$, then
          $[\Delta \W_{(1)}^k]_{ij} = W_{ij}^k + X_{ij}^k - W_{ij}^k = X_{ij}^k$. From $W_{ij}^k \ge -\rho_{ij}$,
          we have
          $0 \le U_{ij}^k = (\rho_{ij} - W_{ij}^k) [\Delta \W_{(1)}^k]_{ij}
          \le 2 \rho_{ij} [\Delta \W_{(1)}^k]_{ij} \le 2 \rho^{\max} |[\Delta \W_{(1)}^k]_{ij}|$.
          Otherwise, 
        if $W_{ij}^k + X_{ij}^k > \rho_{ij}$, then
        $[\Delta \W_{(1)}^k]_{ij} = \rho_{ij} - W_{ij}^k$, hence
          $U_{ij}^k = X_{ij}^k [\Delta \W_{(1)}^k]_{ij}$. From $|X_{ij}^k| \le ||\X^k|| \le \eta_{\sX}$,
          we obtain $0 \le U_{ij}^k \le \eta_{\sX} |[\Delta \W_{(1)}^k]_{ij}|$.
    \end{enumerate} 
  \item Case $X_{ij}^k < 0$: We compute simliarily  to 
    the case $X_{ij}^k > 0$.
\end{enumerate}
Combining these cases results in (\ref{eq:UV}).
\end{proof}

\begin{LEMM}\label{lemm:liminf-obj}
Algorithm~\ref{algo:dspg} with $\epsilon = 0$ stops
in a finite number of iterations attaining the
optimal value $g^*$, or
the  infimum of the difference of the objective functions between
$(\y^k, \W^k)$ and $(\y^*, \W^*) \in \FC^*$ tends to zero as $k$ increases, {\it i.e.},
\begin{eqnarray}
  \liminf_{k\to \infty} |g(\y^k,\W^k) - g^*| = 0.
\label{eq:liminf}
  \end{eqnarray}
\end{LEMM}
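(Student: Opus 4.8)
The plan is to dispose of the finite-termination alternative at once and then argue under the assumption that Algorithm~\ref{algo:dspg} never stops, so that Lemma~\ref{lemm:liminf-delta} furnishes a subsequence $\{k_j\}$ along which $||(\Delta\y^{k_j}_{(1)},\Delta\W^{k_j}_{(1)})||\to 0$. Since every feasible point satisfies $g(\y^k,\W^k)\le g^*$, the gap $g^*-g(\y^k,\W^k)$ is nonnegative, so it suffices to bound it above by a fixed multiple of $||(\Delta\y^k_{(1)},\Delta\W^k_{(1)})||$; evaluating this bound along $\{k_j\}$ then forces the $\liminf$ in (\ref{eq:liminf}) to be zero.

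First I would fix a dual optimal solution $(\y^*,\W^*)\in\FC^*$, whose existence was established before Lemma~\ref{lemm:optset}. Since $(\y^*,\W^*)\in\LC$, Remark~\ref{rem:yw-bound} gives $||\y^*||\le\eta_{\y}$, and $|\W^*|\le\bold{\rho}$ because $\W^*\in\WC$. As $g$ is concave and differentiable, the gradient inequality at $(\y^k,\W^k)$ yields
\begin{eqnarray*}
0 \le g^*-g(\y^k,\W^k)
&\le& \nabla g(\y^k,\W^k)\bullet\big((\y^*,\W^*)-(\y^k,\W^k)\big) \\
&=& (\b-\AC(\X^k))^T(\y^*-\y^k)+\X^k\bullet(\W^*-\W^k).
\end{eqnarray*}
The two terms on the right are precisely the $\y$- and $\W$-components of the $\alpha^k=1$ search direction in (\ref{eq:search-direction1}) paired against $(\y^*,\W^*)-(\y^k,\W^k)$, and controlling them is the whole content of the proof.

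The $\y$-term is routine: using $\b-\AC(\X^k)=\Delta\y^k_{(1)}$ together with $||\y^*-\y^k||\le 2\eta_{\y}$ from Remark~\ref{rem:yw-bound}, the Cauchy--Schwarz inequality bounds it by $2\eta_{\y}||\Delta\y^k_{(1)}||$. The hard part will be the $\W$-term $\X^k\bullet(\W^*-\W^k)$, and this is exactly where Lemma~\ref{lemm:short} is needed. Here I would invoke $|\W^*|\le\bold{\rho}$ to obtain $\X^k\bullet\W^*\le\bold{\rho}\bullet|\X^k|$, whence
\begin{eqnarray*}
\X^k\bullet(\W^*-\W^k)\le\bold{\rho}\bullet|\X^k|-\W^k\bullet\X^k=\sum_{i,j=1}^n U_{ij}^k,
\end{eqnarray*}
a sum of nonnegative terms. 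Applying the entrywise estimate (\ref{eq:UV}) and then $\sum_{i,j}|[\Delta\W_{(1)}^k]_{ij}|\le n\,||\Delta\W^k_{(1)}||$ bounds this by $n\max\{2\rho^{\max},\eta_{\sX}\}\,||\Delta\W^k_{(1)}||$. Combining the two estimates produces a constant $C$ with $0\le g^*-g(\y^k,\W^k)\le C\,||(\Delta\y^k_{(1)},\Delta\W^k_{(1)})||$, and passing to the limit along $\{k_j\}$ finishes the argument. The only subtle point is recognizing the identity $\bold{\rho}\bullet|\X^k|-\W^k\bullet\X^k=\sum_{i,j}U_{ij}^k$ and that its summands, being nonnegative, are exactly the quantities Lemma~\ref{lemm:short} controls by the search direction.
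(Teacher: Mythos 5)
Your proof is correct, and it takes a genuinely different and more economical route than the paper's. The paper splits $|g(\y^k,\W^k)-g^*|$ through the primal objective as $|g(\y^k,\W^k)-f(\X^k)|+|f(\X^k)-f(\X^*)|$, handles the first (duality-gap) term with Lemma~\ref{lemm:short} much as you do, but must then control $\|\X^k-\X^*\|$; it does so by combining the projection property (P1) with the strong convexity of $-\log\det$ (Lemma~\ref{lemm:matrix-norm}(i)) and the complementarity $\X^*\bullet\W^*=\bold{\rho}\bullet|\X^*|$ to obtain a quadratic inequality in $\|\X^k-\X^*\|$, whose solution yields a modulus $u_2(t)$ containing a $\sqrt{t}$ term. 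You bypass all of that by applying the concavity gradient inequality $g^*-g(\y^k,\W^k)\le\nabla g(\y^k,\W^k)\bullet\bigl((\y^*,\W^*)-(\y^k,\W^k)\bigr)$ directly --- legitimate since $\FC$ is convex, $g$ is concave and differentiable there, and both points lie in $\FC$ --- and then estimating the two components separately: Cauchy--Schwarz together with $\|\y^*-\y^k\|\le 2\eta_{\y}$ from Remark~\ref{rem:yw-bound} for the $\y$-part, and the observation $\X^k\bullet\W^*\le\bold{\rho}\bullet|\X^k|$ (valid because $|\W^*|\le\bold{\rho}$), which reduces the $\W$-part to $\sum_{i,j}U^k_{ij}$ and hence to Lemma~\ref{lemm:short}. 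Each step checks out, including the sign observation that $g^*-g(\y^k,\W^k)\ge 0$ so a one-sided bound suffices. What your argument buys is brevity (Lemma~\ref{lemm:matrix-norm} and the quadratic inequality are never invoked) and a sharper, linear modulus $g^*-g(\y^k,\W^k)\le C\,\|(\Delta\y^k_{(1)},\Delta\W^k_{(1)})\|$; what the paper's longer route buys is the additional estimate $\|\X^k-\X^*\|\le u_1(\|(\Delta\y^k_{(1)},\Delta\W^k_{(1)})\|)$, i.e., control of the recovered primal iterates, which is of independent interest but not required by the statement. Either modulus is continuous and vanishes at $0$, which is all that Theorem~\ref{theo:limit-obj} subsequently uses, so your version would slot into the rest of the convergence analysis without change.
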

\begin{proof}
If Algorithm~\ref{algo:dspg} stops at the $k$th iteration,
$(\y^k, \W^k)$ is an optimal solution, therefore, $ g^* = g(\y^k, \W^k)$.
The proof for (\ref{eq:liminf})
is based on an inequality
\begin{eqnarray}
  |g(\y^k,\W^k) - g(\y^*, \W^*)|
  \le |g(\y^k,\W^k) - f(\X^k)| + |f(\X^k) - f(\X^*) | +
  |f(\X^*) - g(\y^*, \W^*)|. \label{eq:inequality-obj}
\end{eqnarray}
We know that $f(\X^*) = g(\y^*, \W^*)$ from the duality theorem, 
hence, we evaluate the first and second terms.

From the definition of $f$ and $g$, the first term will be bounded by
\begin{eqnarray}
& & |f(\X^k) - g(\y^k, \W^k)| \nonumber \\
&=& \left|\bold{\rho} \bullet |\X^k| - \W^k \bullet \X^k
  + (\AC(\X^k)-\b)^T \y^k \right| \nonumber \\
  &\le& \left|\bold{\rho} \bullet |\X^k| - \W^k \bullet \X^k \right|
+ \eta_{\y} ||\AC|| \cdot ||\X^k -\X^*||. \label{eq:fg}
\end{eqnarray}

Using Lemma~\ref{lemm:short}, we further have
\begin{eqnarray}
  \left|\bold{\rho} \bullet |\X^k| - \W^k \bullet \X^k \right|
= \left|\sum_{i=1}^n \sum_{j=1}^n U_{ij}^k\right| 
=\sum_{i=1}^n \sum_{j=1}^n |U_{ij}^k|
\le \max\{2 \rho^{\max}, \eta_{\sX}\} 
\sum_{i=1}^n \sum_{j=1}^n |[\Delta \W_{(1)}^k]_{ij}| \nonumber \\
\le \max\{2 \rho^{\max}, \eta_{\sX} \} n ||\Delta \W_{(1)}^k||
% \le \max\{2 \rho^{\max}, \eta_{\X} \} n ||\X^k||
\le \max\{2 \rho^{\max}, \eta_{\sX} \} n
||(\Delta\y_{(1)}^k, \Delta\W_{(1)}^k)||. \label{eq:rhoW}
\end{eqnarray}
For the second inequality, we have used the relation between the two norms 
$\sum_{i=1}^n \sum_{j=1}^n |V_{ij}| \le  n ||\V||$ that holds
for any $\V \in \SMAT^n$. 

Next, we evaluate the second term of (\ref{eq:inequality-obj}).
Since $f_2(\X) = -\log \det (\X)$ is a convex function,
\begin{eqnarray*}
  f_2(\X^k) \ge f_2(\X^*) + \nabla f_2(\X^*)\bullet (\X^k -\X^*)
\end{eqnarray*}
and 
\begin{eqnarray*}
  f_2(\X^*) \ge f_2(\X^k) + \nabla f_2(\X^k)\bullet (\X^* -\X^k).
\end{eqnarray*}
These two inequalities indicate 
\begin{eqnarray*}
  |f_2(\X^k) - f_2(\X^*)|
  \le \max\{||\nabla f_2(\X^k)||, ||\nabla f_2(\X^*)||\} ||\X^k -\X^*||
  \le \eta_{\sX^{-1}} ||\X^k -\X^*||.
  \end{eqnarray*}
For the last inequality, we have used $\nabla f_2(\X) = -\X^{-1}$ for any $\X \succ \O$
and Remark~\ref{rema:x-bound}.
In addition, we have
\begin{eqnarray*}
  & & |\bold{\rho} \bullet (|\X^k| - |\X^*|)| 
  \le \sum_{i=1}^n \sum_{j=1}^n \rho_{ij}
  \left| | X_{ij}^k| - |X^*_{ij}| \right| \\
  &\le& \sum_{i=1}^n \sum_{j=1}^n \rho_{ij} | X_{ij}^k - X^*_{ij}|
% = |\bold{\rho} \bullet (\X^k - \X^*)|
  \le ||\bold{\rho}||\cdot || \X^k - \X^*||.
  \end{eqnarray*}
Hence, the second term of (\ref{eq:inequality-obj}) is bounded by
\begin{eqnarray}
& & |f(\X^k) - f(\X^*)| \nonumber \\
  &\le& | \C \bullet (\X^k - \X^*)|
  + \mu | f_2(\X^k) - f_2(\X^*)|
  + |\bold{\rho} \bullet (|\X^k| - |\X^*|)|  \nonumber \\
  &\le& (||\C|| + \mu \eta_{\X^{-1}} + ||\bold{\rho}||) ||\X^k - \X^*||. \label{eq:ff}
  \end{eqnarray}

We now evaluate the norm $||\X^k - \X^*||$.
It follows from  (P1) of Proposition~\ref{prop:hager} and $(\y^*, \W^*) \in \widehat{\WC}$ that
\begin{eqnarray*}
  \left( ( (\y^k,\W^k) + \nabla g(\y^k, \W^k) )
  - P_{\widehat{\sWC}}((\y^k,\W^k) + \nabla g(\y^k, \W^k))
  \right) \\
  \bullet 
  \left(  (\y^*,\W^*) 
  - P_{\widehat{\sWC}}((\y^k,\W^k) + \nabla g(\y^k, \W^k))
  \right) \le 0.
\end{eqnarray*}
% Using $(\Delta \y_{(1)}^k, \Delta \W_{(1)}^k)$,
Therefore, 
we obtain
\begin{eqnarray*}
  \left( \nabla g(\y^k, \W^k) ) -
  (\Delta \y_{(1)}^k, \Delta \W_{(1)}^k) \right)
  \bullet 
  \left(  (\y^*,\W^*) - (\y^k, \W^k)
  - (\Delta \y_{(1)}^k, \Delta \W_{(1)}^k) \right)
\le 0, 
\end{eqnarray*}
and this is equivalent to 
\begin{eqnarray}
& & \left(\Delta \y_{(1)}^k, \Delta \W_{(1)}^k\right)
  \bullet \left( (\y^*, \W^*) - (\y^k, \W^k) \right)
  + \nabla g(\y^k, \W^k)
  \bullet  \left(\Delta \y_{(1)}^k, \Delta \W_{(1)}^k\right)
- ||(\Delta \y_{(1)}^k, \Delta \W_{(1)}^k)||^2  \nonumber \\
&\ge& \nabla g(\y^k, \W^k) \bullet
  \left( (\y^*, \W^*) - (\y^k, \W^k)  \right). \label{eq:g-ineq}
\end{eqnarray}

On the other hand, it follows from (i) of Lemma~\ref{lemm:matrix-norm} that
\begin{eqnarray*}
  & & \left( \nabla g(\y^k, \W^k) -
  \nabla g(\y^*, \W^*)\right) \bullet
  \left((\y^*, \W^*) - (\y^k, \W^k)\right) \\
&=& (-\AC(\X^k) +\AC(\X^*), \X^k - \X^*) \bullet 
  (\y^* - \y^k, \W^* - \W^k) \\
&=& (\X^k - \X^*) \bullet (-\AC^T(\y^* - \y^k)) + (\X^k - \X^*) \bullet (\W^* - \W^k) \\
&=& (\X^k - \X^*) \bullet ((\C + \W^* - \AC^T(\y^*)) - (\C + \W^k - \AC^T(\y^k))) \\
  &=&  (\X^k - \X^*) \bullet (\mu(\X^*)^{-1} - \mu(\X^k)^{-1}) \\
  &\ge& \frac{\mu}{(\beta_{\sX}^{\max})^2} ||\X^k - \X^*||^2,
\end{eqnarray*}
and this is equivalent to
\begin{eqnarray*}
\nabla g(\y^k, \W^k) \bullet
  \left( (\y^*, \W^*) - (\y^k, \W^k)  \right) 
  &\ge& \frac{\mu}{(\beta_{\X}^{\max})^2} ||\X^k - \X^*||^2
  + \nabla g(\y^*, \W^*)
  \bullet
  \left( (\y^*, \W^*) - (\y^k, \W^k)  \right)
\end{eqnarray*}
By connecting this inequality and (\ref{eq:g-ineq}), we obtain
\begin{eqnarray*}
& & \left(\Delta \y_{(1)}^k, \Delta \W_{(1)}^k\right)
  \bullet \left( (\y^*, \W^*) - (\y^k, \W^k) \right)
  + \nabla g(\y^k, \W^k)
  \bullet  \left(\Delta \y_{(1)}^k, \Delta \W_{(1)}^k\right)
- ||(\Delta \y_{(1)}^k, \Delta \W_{(1)}^k)||^2  \nonumber \\
  &\ge& \frac{\mu}{(\beta_{\X}^{\max})^2} ||\X^k - \X^*||^2
  + \nabla g(\y^*, \W^*)
  \bullet
  \left( (\y^*, \W^*) - (\y^k, \W^k)  \right), 
\end{eqnarray*}
and this is equivalent to 
\begin{eqnarray}
& & \frac{\mu}{(\beta_{\sX}^{\max})^2} ||\X^k - \X^*||^2
- \left(\Delta \y_{(1)}^k, \Delta \W_{(1)}^k\right)
\bullet \left( (\y^*, \W^*) - (\y^k, \W^k) \right)
+ ||(\Delta \y_{(1)}^k, \Delta \W_{(1)}^k)||^2  \nonumber \\
& \le &  
  \nabla g(\y^k, \W^k)
  \bullet  \left(\Delta \y_{(1)}^k, \Delta \W_{(1)}^k\right)  
  - \nabla g(\y^*, \W^*)
  \bullet
  \left( (\y^*, \W^*) - (\y^k, \W^k)  \right). \label{eq:g-ineq2}
\end{eqnarray}

Since (\ref{eq:primal-dual}) and there is no duality gap,
we know that $\X^* \bullet \W^* = \bold{\rho} \bullet |\X^*|$.
Therefore,
\begin{eqnarray*}
& &\nabla g(\y^*, \W^*) \bullet
  \left((\y^*, \W^*) - (\y^k, \W^k) -
    (\Delta \y_{(1)}^k, \Delta \W_{(1)}^k) \right) \\
&=& (\b - \A(\X^*), \X^*)  \bullet
  \left((\y^*, \W^*) - (\y^k, \W^k) -
    (\Delta \y_{(1)}^k, \Delta \W_{(1)}^k) \right) \\
&=& (\0, \X^*) \bullet 
\left((\y^* - \y^k - \Delta \y_{(1)}^k, \W^* - \W^k -
    \Delta \W_{(1)}^k) \right) \\
  &=&  \X^* \bullet \W^* - \X^* \bullet [\W^k + \X^k]_{\le
  \bold{\rho}} \\
  &=&  |\X^*| \bullet \bold{\rho} - \X^* \bullet [\W^k + \X^k]_{\le
  \bold{\rho}} \\
  &\ge& 0.
\end{eqnarray*}
Hence, it follows that 
\begin{eqnarray}
  & & 
\nabla g(\y^k, \W^k) \bullet
  (\Delta \y_{(1)}^k, \Delta \W_{(1)}^k) 
-
\nabla g(\y^*, \W^*) \bullet
  \left( (\y^*, \W^*) - (\y^k, \W^k) \right)
    \nonumber \\
  & = & \left(\nabla g(\y^k, \W^k) - \nabla g(\y^*, \W^*) \right)
  \bullet (\Delta \y_{(1)}^k, \Delta \W_{(1)}^k) \nonumber \\
& & \quad  - \nabla g(\y^*, \W^*) \bullet
  \left( (\y^*, \W^*) - (\y^k, \W^k) - (\Delta \y_{(1)}^k, \Delta
    \W_{(1)}^k) \right) \nonumber \\
  & \le & \left(\nabla g(\y^k, \W^k) - \nabla g(\y^*, \W^*) \right)
  \bullet (\Delta \y_{(1)}^k, \Delta \W_{(1)}^k) \nonumber \\
&\le& ||\nabla g(\y^*, \W^*) - \nabla g(\y^k, \W^k) ||
  \cdot || (\Delta \y_{(1)}^k, \Delta \W_{(1)}^k)|| \nonumber \\
& = & ||(-\AC(\X^*)+\AC(\X^k), \X^* - \X^k)||
  \cdot || (\Delta \y_{(1)}^k, \Delta \W_{(1)}^k)|| \nonumber \\
& \le & (1 + ||\AC||)||\X^k - \X^*||
  \cdot || (\Delta \y_{(1)}^k, \Delta \W_{(1)}^k)||. \label{eq:g-ineq3}
\end{eqnarray}

From (\ref{eq:g-ineq2}) and (\ref{eq:g-ineq3}), we obtain
\begin{eqnarray*}
  & & \frac{\mu}{(\beta_{\sX}^{\max})^2} ||\X^k - \X^*||^2
  - (\Delta \y_{(1)}^k, \Delta \W_{(1)}^k) \bullet 
\left( (\y^*, \W^*) - (\y^k, \W^k) \right)
+ ||(\Delta \y_{(1)}^k, \Delta \W_{(1)}^k)||^2 \\
&\le& (1 + ||\AC||) ||\X^k - \X^*||
  \cdot || (\Delta \y_{(1)}^k, \Delta \W_{(1)}^k)||.
  \end{eqnarray*}
Using
$  || (\y^*, \W^*) - (\y^k, \W^k) ||
= \sqrt{||\y^* - \y^k||^2 + ||\W^* - \W^k||^2}
  \le  ||\y^* - \y^k|| + ||\W^* - \W^k||
  \le  ||\y^*|| + ||\y^k|| + ||\W^*|| + ||\W^k||
  \le 2 (\eta_{\y} + \eta_{\sW})$
and  $||(\Delta \y_{(1)}^k, \Delta \W_{(1)}^k)||^2 \ge 0$, it holds that 
\begin{eqnarray*}
  \frac{\mu}{(\beta_{\sX}^{\max})^2} ||\X^k - \X^*||^2
  - 2 (\eta_{\y} + \eta_{\sW}) ||(\Delta \y_{(1)}^k, \Delta \W_{(1)}^k)|| 
%\green{||\Delta \y_{(1)}^k, \Delta \W_{(1)}^k||}
\le (1 + ||\AC||) ||\X^k - \X^*||
  \cdot || (\Delta \y_{(1)}^k, \Delta \W_{(1)}^k)||.
  \end{eqnarray*}
This is a quadratic inequality with respect to
$||\X^k - \X^*||$,
% \begin{eqnarray*}
% & & \frac{\mu}{(\beta_{\X}^{\max})^2} ||\X^k - \X^*||^2
%  -  (1 + ||\AC||) \cdot|| (\Delta \y_{(1)}^k, \Delta \W_{(1)}^k)||
%  \cdot ||\X^k - \X^*|| \\
% & &  -
%  2 (\eta_{\y} + \eta_{\W})
%  ||\Delta \y_{(1)}^k, \Delta \W_{(1)}^k) || 
% \le 0.
%  \end{eqnarray*}
and solving this quadratic inequality leads us to
\begin{eqnarray}
  ||\X^k - \X^*|| \le u_1(||(\Delta \y_{(1)}^k, \Delta \W_{(1)}^k)||),
\label{eq:XX}
\end{eqnarray}
where
$u_1(t) :=
\frac{1}{2 \mu} (1+||\AC||)(\beta_{\sX}^{\max})^2 t+
  \frac{\beta^{\max}_{\sX}}{2 \mu} 
\sqrt{\left((1+||\AC||)(\beta_{\sX}^{\max})\right)^2 t^2
+ 8 \mu %\green{(\beta_{\X}^{\max})^2}
(\eta_{\y} + \eta_{\sW}) t }
$.

Using (\ref{eq:fg}), (\ref{eq:rhoW}), (\ref{eq:ff}) and (\ref{eq:XX}), the inequality (\ref{eq:inequality-obj}) is now
evaluated as
\begin{eqnarray*}
  |g(\y^k,\W^k) - g(\y^*, \W^*)| \le u_2(||(\Delta \y_{(1)}^k, \Delta \W_{(1)}^k)||)
\end{eqnarray*}
where
\begin{eqnarray}
u_2(t) := \max\{2\rho^{\max}, \eta_{\sX}\} n t  + 
(\eta_{\y} ||\AC|| + ||\C|| + \mu \eta_{\sX^{-1}}  + ||\rho||) u_1(t).
  \label{eq:u2}
\end{eqnarray}
Since all the coefficients are positive,
the function $u_2(t)$ is continuous for $t \ge 0$, and
$u_2(t) > 0$ for $t>0$.
Hence,  it follows Lemma~\ref{lemm:liminf-delta} that  
\begin{eqnarray*}
  \liminf_{k \to \infty} |g(\y^k,\W^k) - g(\y^*, \W^*)| = 0.
\end{eqnarray*}

\end{proof}

Finally, we are ready to show the main result, the convergence of the sequence
generated by Algorithm~\ref{algo:dspg} to the optimal value.

\begin{THEO}\label{theo:limit-obj}
Algorithm~\ref{algo:dspg} with $\epsilon = 0$ stops in a finite number of iterations attaining the
optimal value $g^*$, or generate a sequence $\{(\y^k, \W^k)\}$
such that
\begin{eqnarray*}
  \lim_{k\to \infty} |g(\y^k,\W^k) - g^*| = 0.
  \end{eqnarray*}
\end{THEO}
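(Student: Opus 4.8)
I would reduce the theorem to the single claim that the ordinary limit $\lim_{k\to\infty} g(\y^k,\W^k)$ \emph{exists}, because its value is then pinned down for free by the already established $\liminf$ result. Write $g_k := g(\y^k,\W^k)$. The finite-termination alternative is immediate: if Algorithm~\ref{algo:dspg} stops at iteration $k$, then Lemma~\ref{lemm:stop} certifies $(\y^k,\W^k)$ to be optimal, so $g_k = g^*$; hence I assume the algorithm runs forever. The plan is to work with the non-monotone reference value $m_k := \min_{0\le h\le\min\{k,M-1\}} g_{k-h}$. First I would show $m_k$ is non-decreasing and bounded above by $g^*$, so that $\bar m := \lim_{k} m_k$ exists: non-decrease follows from the accepted line-search condition (\ref{eq:g-condition}) together with the ascent inequality (\ref{eq:inequality-g}) (which give $g_{k+1}\ge m_k$), while the upper bound is feasibility of the iterates plus weak duality, i.e. $g_k\le g^*$ as in Lemma~\ref{lemm:level}.

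The heart of the proof is upgrading this to $\lim_k g_k = \bar m$ for the whole sequence. Let $\ell(k)$ be an index in $\{k-\min\{k,M-1\},\dots,k\}$ attaining $g_{\ell(k)}=m_k$. Applying (\ref{eq:g-condition}) at index $\ell(k)-1$ and combining (\ref{eq:inequality-g}) with the uniform step-length bound $\lambda^k\ge\lambda_{\min}>0$ from Lemma~\ref{lemm:low-min} yields $g_{\ell(k)} \ge m_{\ell(k)-1} + c\,||(\Delta\y^{\ell(k)-1},\Delta\W^{\ell(k)-1})||^2$ for a constant $c>0$; since both $g_{\ell(k)}=m_k$ and $m_{\ell(k)-1}$ converge to $\bar m$, the search direction at index $\ell(k)-1$ tends to zero. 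As a single step moves the iterate by at most $||(\Delta\y^{k},\Delta\W^{k})||$ and $g$ is uniformly continuous on the bounded level set $\LC$ (Lemma~\ref{lemm:bound}, Remark~\ref{rema:x-bound}, and the Lipschitz-type estimate (\ref{eq:g-contium})), this forces $g_{\ell(k)-1}\to\bar m$. Iterating the same step backwards through the window — applying (\ref{eq:g-condition}) at $\ell(k)-2,\ell(k)-3,\dots$ — I obtain, for each fixed $j\in\{0,1,\dots,M\}$, both $||(\Delta\y^{\ell(k)-j},\Delta\W^{\ell(k)-j})||\to 0$ and $g_{\ell(k)-j}\to\bar m$ as $k\to\infty$. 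Since every sufficiently large index $n$ can be written as $\ell(n+M)-j$ with $j\in\{1,\dots,M\}$ and only finitely many offsets occur, this propagation delivers $\lim_n g_n = \bar m$ for the full sequence.

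Finally I would identify $\bar m$. Lemma~\ref{lemm:liminf-obj} gives $\liminf_k|g_k-g^*| = 0$, and since $g_k\le g^*$ for every $k$ this is exactly $\limsup_k g_k = g^*$. Because the genuine limit $\bar m = \lim_k g_k$ now exists, it must coincide with this $\limsup$, so $\bar m = g^*$ and therefore $\lim_k|g_k-g^*| = 0$, which is the claim.

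I expect the main obstacle to be the backward window-propagation of the previous paragraph. The non-monotonicity of $\{g_k\}$ means that neither the $\liminf$ of Lemma~\ref{lemm:liminf-delta} nor the pointwise bound $g^*-g_k\le u_2(||(\Delta\y^k_{(1)},\Delta\W^k_{(1)})||)$ alone excludes persistent oscillation between $\bar m$ and $g^*$; one genuinely has to track the iterates across the whole window of length $M$ and invoke uniform continuity to conclude that the per-step change of $g$ vanishes \emph{everywhere}, not merely along the subsequence realizing the reference value $m_k$. The indispensable enabling fact here, which is what separates this argument from the generic non-monotone theory, is the uniform lower bound on the step length in Lemma~\ref{lemm:low-min}: it is precisely what converts ``step times direction tends to zero'' into ``direction tends to zero'' at each stage of the induction.
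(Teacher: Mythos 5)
Your argument is correct, and it takes a genuinely different route from the paper. The paper proves the theorem by contradiction: it collects the infinitely many ``bad'' indices $k_j$ with $g_{k_j}<g^*-\bar\epsilon$, shows via (\ref{eq:g-condition}) that consecutive bad indices are at most $M$ apart, converts ``objective far from $g^*$'' into ``search direction bounded away from zero'' through the explicit error function $u_2$ from the proof of Lemma~\ref{lemm:liminf-obj}, and then reruns the telescoping-increase counting argument of Lemma~\ref{lemm:liminf-delta} along that subsequence to contradict the upper bound $g^*$. You instead prove directly that $\lim_k g_k$ exists via the classical non-monotone (Grippo--Lampariello--Lucidi-type) analysis: monotonicity and boundedness of the reference values $m_k$, then backward propagation through the window of length $M$ using the uniform step-length bound of Lemma~\ref{lemm:low-min}, the ascent inequality (\ref{eq:inequality-g}), and the boundedness of $\nabla g$ on the compact region containing the iterates and the line segments between them (Lemma~\ref{lemm:bound}, Remark~\ref{rema:x-bound}); only at the very end do you invoke Lemma~\ref{lemm:liminf-obj} together with $g_k\le g^*$ to identify the limit as $g^*$. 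Each piece checks out: $m_{k+1}\ge m_k$ follows because every index in the new window already satisfies $g_i\ge m_k$ and $g_{k+1}\ge m_k$ by the accepted line search; the covering claim $n=\ell(n+M)-j$ with $j\in\{1,\dots,M\}$ holds since $\ell(n+M)\in\{n+1,\dots,n+M\}$; and the constant $c=\gamma\,\lambda_{\min}/\alpha_{\max}>0$ is uniform in $k$. What your approach buys is a stronger intermediate conclusion --- $\lim_k g_k$ exists and, by the same covering argument, $\|(\Delta\y^k,\Delta\W^k)\|\to 0$ along the \emph{whole} sequence, sharpening Lemma~\ref{lemm:liminf-delta} --- and it decouples the convergence mechanism from the duality-based error bound $u_2$, which is needed only to name the limit. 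What the paper's route buys is lighter bookkeeping: the contradiction argument with $u_2$ avoids the backward induction over the window entirely, at the cost of reusing the counting argument of Lemma~\ref{lemm:liminf-delta} in a slightly informal way on a subsequence.
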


\begin{proof}
Suppose, to contrary, that there exists $\bar{\epsilon} > 0$ such that we have
an infinite sequence $\{k_1,k_2,\ldots,k_j,\ldots\}$ that satisfies
$g_{k_j}  < g^* -  \bar{\epsilon}$.

We should remark that it holds $k_{j+1} - k_j \le M$.
If $k_{j+1} - k_j > M$, since we can assume that
$g_{i} + \bar{\epsilon} \ge g^* $ for 
each $i \in [k_j + 1, \ldots, k_{j+1}-1]$,
the inequality (\ref{eq:g-condition})
indicates
$g_{k_{j+1}} \ge
\min\{g_{k_{j+1}-1}, \ldots, g_{k_{j+1}-M} \}
\ge g^* - \bar{\epsilon}$.
Hence, we know $k_{j+1} - k_j \le M$ and the sequence $\{k_1,k_2,\ldots,k_j,$ $\ldots\}$
should be actually infinite. 

Since $u_2(t)$ in (\ref{eq:u2}) is continuous for $t \ge 0$ and 
$u_2(t) > 0$ for $t > 0$, 
there exists $\bar{\delta}$ such that
$||(\Delta \y^{k_j}, \Delta \W^{k_j})|| > \bar{\delta} $ for each $j$.
We apply the same discussion as Lemma~\ref{lemm:liminf-delta} 
to the infinite sequence
$\{g_{k_1},g_{k_2},\ldots,g_{k_j},\ldots\}$.
 If $j$ becomes sufficiently large,
we have   a contradiction to the upper bound
$g_{k_j} \le g^*$.

\end{proof}
%\green{(Yamashita memo: I have checked until here 20180923-2352)}

\section{Numerical Experiments}\label{sec:experiments}

We present numerical results obtained from implementing
Algorithm~\ref{algo:dspg}   on the randomly generated synthetic data,
 deterministic synthetic data and gene expression data in~\cite{LI10} which includes one of most
efficient computational results. 
Our numerical experiments were conducted on larger instances than the test problems  in~\cite{LI10} whenever it was
possible.

We compare our code DSPG, Algorithm~\ref{algo:dspg}, with the inexact primal-dual path-following
interior-point method (IIPM) \cite{LI10},
the Adaptive Spectral Projected Gradient method (ASPG) \cite{LU10}, and
the Adaptive
Nesterov's Smooth method (ANS) \cite{LU10}. 
For the gene expression data, our results are also compared with  the QUadratic approximation for sparse Inverse
Covariance estimation method (QUIC)~\cite{HSIEH14}
in Section~\ref{subsec:realdata}.
A comparison with the results on the Newton-CG primal proximal-point 
algorithm (PPA) \cite{WANG10} is not included
 since its performance was reported to be inferior to 
the IIPM \cite{LI10} and it failed to solve some instances.

We note that different stopping criteria are used in each of the aforementioned codes. 
They obviously affect the number of iterations and consequently the
overall computational time. 
For a fair comparison, we set
the threshold values for the IIPM, ASPG, ANS, and QUIC    %to obtain %at least a greater duality gap $f(\X)-\g(\y,\W)$ 
comparable to that of DSPG.
% in order to have a %fair comparison 
%so that they can spend less computational time.
More precisely,
the stopping criteria of the DSPG was set to
\[ ||(\Delta \y^k_{(1)}, \Delta \W^k_{(1)})||_{\infty} \le \epsilon, \] 
where $\epsilon=10^{-5}$.
For the IIPM, we employed
%\red{
\[
\max\left\{\frac{\textit{gap}}{1+|f(\X^k)|+|g(\y^k,\W^k)|},\textit{pinf},
\textit{dinf}\right\} \leq
\textrm{gaptol}:=10^{-6},
\]
%}
where \textit{gap}, \textit{pinf}, \textit{dinf} were
specified in~\cite{LI10}, and  for the ASPG and ANS, we used two 
thresholds $\epsilon_0:=10^{-3}$ and 
$\epsilon_c:=10^{-5}$ such that $f(\X)\geq f(\X^*)-\epsilon_0$ and
$\max_{(i,j)\in\Omega}|X_{ij}|\leq \epsilon_c$ \cite{LI10}.
The QUIC stops when $\|\partial f(\X^k)\|/\textrm{Tr}{(\bold{\rho}}|\X^k|)< 10^{-6}$.
% \textrm{tol}$ with tol=$10^{-6}$.

The DSPG was experimented  with the following parameters:
 $\gamma=10^{-4}$, $\tau=0.5$, $0.1=\sigma_1 < \sigma_2=0.9$, 
$\alpha_{\min}=10^{-15}=1/\alpha_{\max}$, $\alpha_0=1$, and $M=50$.
In the DSPG, the mexeig routine of the IIPM was used to reduce the computational time.
All numerical experiments were performed on a computer with Intel Xeon X5365 (3.0 GHz)
with 48 GB memory using MATLAB.

We set the initial solution as $(\y^0, \W^0) = (\0, \O)$, which satisfies the assumption (iii) for the instances tested 
in Sections~\ref{subsec:random} and~\ref{subsec:deterministic}.
Let $(\y^k,\W^k)$ be the output of Algorithm~\ref{algo:dspg}. The recovered
primal solution $\X^k:=\mu(\C+\W^k-\AC^T(\y^k))^{-1}$ may not 
satisfy the equalities $X_{ij}=0$ for $(i,j)\in\Omega$ in $(\PC)$ due to numerical errors. 
In this case,  we replace the value of ${X}_{ij}$ with $0$ for
$(i,j)\in\Omega$. % as a post-processing. 
For the tested instances, this replacement  %post-processing 
did not affect the semidefiniteness of $\X$, since the primal optimal solution 
was unique (Lemma~\ref{lemm:optset}) and the nonzero values of  
$X_{ij}$ were very small.

In the tables in Sections \ref{subsec:random} and \ref{subsec:deterministic}, the entry corresponding to the DSPG 
under the column ``primal obj." indicates the minimized 
function value $(\PC)$ for
$\X$  after replacing nonzero values  of ${X}_{ij}$ with $0$ for $(i,j)\in\Omega$, 
while ``gap" means the maximized function value $(\DC)$ 
for $(\y, \W)$ minus the primal one. Therefore, it should have a
minus sign.
The entries for the IIPM, ASPG, and ANS under  ``primal obj." column show the
difference between the corresponding function values and the 
 primal objective function values of the DSPG. Thus,
if this value is positive, it means that the DSPG obtained a lower
value for the minimization problem.
The tables also show the minimum eigenvalues for the primal variable,
number of (outer) iterations, and computational time.

In order to measure the effectiveness of recovering the inverse
covariance matrix $\bold{\Sigma}^{-1}$, %\green{(in the case of section~\ref{subsec:random})}, 
we adopt the strategy in
\cite{LI10}. 
The normalized entropy loss (loss$_E$) and
the quadratic loss (loss$_Q$) are computed as
\[
\textrm{loss}_E:=\frac{1}{n}(\textrm{tr}(\bold{\Sigma}\X)
\log\det(\bold{\Sigma}\X)-n),
\qquad \textrm{loss}_Q:=\frac{1}{n}\|\bold{\Sigma}\X-\I\|, 
\]
respectively.
Notice that  the two values should ideally be zero if the regularity
term $\bold{\rho}\bullet|\X|$ is disregarded in $(\PC)$.
Also, the sensitivity and the specificity defined as
\[
\textrm{the sensitivity}:=\frac{\textrm{TP}}{\textrm{TP}+\textrm{FN}},
\qquad
\textrm{the specificity}:=\frac{\textrm{TN}}{\textrm{TN}+\textrm{FP}},
\]
are computed, 
where TP, TN, FP, and FN are the true positives, true negatives,
false positive, and false negative, respectively. 
In our case, the true positives are  correct nonzero entries in
$\bold{\Sigma}^{-1}$ and the true negatives are correct zero entries in
the same matrix. Therefore, the sensitivity and   specificity measure 
the correct rates of nonzero and of zero entries of $\bold{\Sigma}^{-1}$, respectively.
The values close to one for both sensitivity and specificity would be desirable.
Thus, we set values of $\rho>0$ such that $\bold{\rho}=\rho\E$ 
where $\E$ is the matrix of all ones in $(\PC)$ for which the
sensitivity and specificity become close to each other, and also $\mu$ equals to one.

\subsection{Randomly generated synthetic data}
\label{subsec:random}

As in \cite[Section~4.1]{LI10}, we  generated the test data by
first generating a sparse positive definite matrix $\bold{\Sigma}^{-1}\in\SMAT^n$ 
for a density parameter $\delta>0$, and then computing a sample covariance matrix $\C\in\SMAT^n$
 from $2n$ i.i.d.~random vectors selected from the $n$-dimensional
Gaussian distribution $\NC(\0,\bold{\Sigma})$.

Our experiments were carried out on different sizes $n$ of matrix $\bold{\Sigma}^{-1}$,  
two choices of density parameters $\delta=0.1$ and $0.9$, and 
problem $(\PC)$ without the linear
constraints $\AC(\X)=\b$ and with linear constraints $X_{ij}=0$ for 
$(i,j)\in\Omega$,
where $\Omega$ specifies the zero elements of  $\bold{\Sigma}^{-1}$.
%\green{We set $\mu=1$ and $\bold{\rho}=\rho\E$ where $\E$ is a matrix 
%with all one entries and $\rho>0$ is specified later.}

%Observe that 

\begin{table}[!htbp]
\footnotesize
\caption{Comparative numerical results for the DSPG, IIPM, ASPG and ANS on
unconstrained randomly generated synthetic data. 
$n=$1000, 3000, and 5000, density $\delta=0.1$ and $0.9$.}
\label{tab:randomunconstrained}
\begin{center}
\begin{tabular}{|c|c|c|r|r|r||c|r|} \hline
$n$ & $\rho$ & method & \multicolumn{1}{c|}{primal obj.} & \multicolumn{1}{c|}{iter.} & time (s) & \multicolumn{2}{c|}{$\delta=0.1$} \\ \hline
& & DSPG & $-$648.85805752 & 89 & 42.7 & $\lambda_{\min}(\X)$ &7.64$e-$02 \\ \cline{7-8}
& & (gap) & $-$0.00006098 & ~ & ~ & loss$_E$ & 1.8$e-$01 \\ \cline{7-8}
1000 & $5/1000$ & IIPM & $+$0.00000385 & 15 & 78.0 & loss$_Q$ & 2.2$e-$02 \\ \cline{7-8}
& = 0.005 & ASPG & $+$0.00046235 & 77 & 49.5 & sensitivity & 0.90 \\ \cline{7-8}
& & ANS & $+$0.00093895 & 310 & 172.4 & specificity & 0.88 \\ \hline \hline
%& & DSPG & $-$2384.60488444 & 65 & 184.4 & $\lambda_{\min}(\X)$ & 1.24$e-$01 \\ \cline{7-8}
%& & (gap) & $-$0.00015011 & ~ & ~ & loss$_E$ & 1.9$e-$01 \\ \cline{7-8}
%2000 & $4/2000$ & IIPM & $+$0.00001508 & 15 & 431.7 & loss$_Q$ & 1.6$e-$02 \\ \cline{7-8}
%& = 0.002 & ASPG & $+$0.00046472 & 58 & 242.0 & sensitivity & 0.88 \\ \cline{7-8}
%& & ANS & $+$0.00084532 & 288 & 943.0 & specificity & 0.85 \\ \hline \hline
& & DSPG & $-$4440.85648991 & 62 & 657.2 & $\lambda_{\min}(\X)$ & 2.42$e-$01  \\ \cline{7-8}
& & (gap) & $-$0.00009711 & ~ & ~ & loss$_E$ & 1.3$e-$02 \\ \cline{7-8}
3000 & $4/3000$ & IIPM & $+$0.00015710 & 15 & 1219.9 & loss$_Q$ & 2.0$e-$01 \\ \cline{7-8}
& = 0.001333 & ASPG & $+$0.00082640 & 49 & 801.9 & sensitivity & 0.82 \\ \cline{7-8}
& & ANS & $+$0.00089732 & 269 & 3255.9 & specificity & 0.85 \\ \hline \hline
%& & DSPG & $-$7053.98421159 & 56 & 1460.7 & $\lambda_{\min}(\X)$ & 3.94$e-$01 \\ \cline{7-8}
%& & (gap) & $-$0.00020490 & ~ & ~ & loss$_E$ & 1.8$e-$01 \\ \cline{7-8}
%4000 & $3/4000$ & IIPM & $+$0.00020357 & 15 & 2468.0 & loss$_Q$ & 1.1$e-$02  \\ \cline{7-8}
%& = 0.00075 & ASPG & $+$0.00079026 & 50 & 1998.3 & sensitivity & 0.85  \\ \cline{7-8}
%& & ANS & $+$0.00078945 & 251 & 7482.5 & specificity & 0.81  \\ \hline \hline
& & DSPG & $-$9576.24150224 & 57 & 3015.4 & $\lambda_{\min}(\X)$ & 1.0$e-$02 \\ \cline{7-8}
& & (gap) & $-$0.00015026 & ~ & ~ & loss$_E$ & 1.9$e-$01 \\ \cline{7-8}
5000 & $3/5000$ & IIPM & $+$0.00039297 & 15 & 4730.0 & loss$_Q$ & 1.0$e-$02 \\ \cline{7-8}
& = 0.0006 & ASPG & $+$0.00012477 & 52 & 4137.0  & sensitivity & 0.82  \\ \cline{7-8}
& & ANS & $+$0.00084603 & 248 & 14929.4 & specificity & 0.81 \\ \hline \hline
%\end{tabular}
%\end{center}
%\end{table}
%
%\begin{table}[!htbp]
%\footnotesize
%\caption{Unconstrained, $n=1000, 2000, 3000, 4000, 5000$, density $\delta=0.9$.}
%\begin{center}
%\begin{tabular}{|c|c|c|r|r|r||c|r|} \cline{1-6}
$n$ & $\rho$ & method & \multicolumn{1}{c|}{primal obj.} & \multicolumn{1}{c|}{iter.} & time (s) & \multicolumn{2}{c|}{$\delta=0.9$}\\ \hline 
& & DSPG & $-$3584.93243464 & 33 & 16.5 & $\lambda_{\min}(\X)$ & 3.30$e+$01 \\ \cline{7-8}
& & (gap) & $-$0.00000122 & ~ & ~ & loss$_E$ & 9.4$e-$02 \\ \cline{7-8}
1000 & $0.15/1000$ & IIPM & $+$0.00031897 & 15 & 56.1 & loss$_Q$ & 1.5$e-$02 \\ \cline{7-8}
& = 0.00015 & ASPG & $+$0.00070753 & 21 & 18.0 & sensitivity & 0.50 \\ \cline{7-8} 
& & ANS & $+$0.00094435 & 78 & 49.2 & specificity & 0.53 \\ \hline \hline
%& & DSPG & $-$8137.85105408 & 27 & 77.6 & $\lambda_{\min}(\X)$ & 1.1$e-$02 \\ \cline{7-8}
%& & (gap) & $-$0.00000771 & ~ & ~ & loss$_E$ & 9.3$e-$02 \\ \cline{7-8}
%2000 & $0.125/2000$ & IIPM & $+$0.00069626 & 17 & 360.1 & loss$_Q$ & 1.1$e-$02 \\ \cline{7-8}
%& = 0.0000625 & ASPG & $+$0.00047051 & 21 & 125.7 & sensitivity & 0.52 \\ \cline{7-8}
%& & ANS & $+$0.00091322 & 79 & 301.7 & specificity & 0.51 \\ \hline \hline
& & DSPG & $-$13012.61749049 & 26 & 278.9 & $\lambda_{\min}(\X)$ & 7.56$e+$01 \\ \cline{7-8}
& & (gap) & $-$0.00000818 & ~ & ~ & loss$_E$ & 8.3$e-$02 \\ \cline{7-8}
3000 & $0.125/3000$ & IIPM & $+$0.00125846 & 18 & 1133.4 & loss$_Q$ & 8.1$e-$03 \\ \cline{7-8}
& = 0.0000417 & ASPG & $+$0.00049848 & 21 & 474.8 & sensitivity & 0.49 \\ \cline{7-8}
& & ANS & $+$0.00097430 & 81 & 1135.8 & specificity & 0.54 \\ \hline \hline
%& & DSPG & $-$18164.11725722 & 27 & 710.7 & $\lambda_{\min}(\X)$ & 9.17$e+$01 \\ \cline{7-8}
%& & (gap) & $-$0.00000461 & ~ & ~ & loss$_E$ & 8.7$e-$02 \\ \cline{7-8}
%4000 & $0.11/4000$ & IIPM & $+$0.00144396 & 20 & 2695.1 & loss$_Q$ & 7.2$e-$03 \\ \cline{7-8}
%& = 0.0000275 & ASPG & $+$0.00045200 & 21 & 1193.2 & sensitivity & 0.51 \\ \cline{7-8}
%& & ANS & $+$0.00096774 & 85 & 2918.3 & specificity & 0.51 \\ \hline \hline
& & DSPG & $-$23487.45518427 & 26 & 1381.3 & $\lambda_{\min}(\X)$ & 1.07$e+$02 \\ \cline{7-8}
& & (gap) & $-$0.00000534 & ~ & ~ & loss$_E$ & 9.0$e-$02  \\ \cline{7-8}
5000 & $0.1/5000$ & IIPM & $+$0.00068521 & 23 & 5928.7 & loss$_Q$ & 6.5$e-$03  \\ \cline{7-8}
& = 0.00002 & ASPG & $+$0.00044082 & 21 & 2405.7 & sensitivity & 0.53 \\ \cline{7-8}
& & ANS & $+$0.00097990 & 90 & 6150.1 & specificity & 0.49 \\ \hline
\end{tabular}
\end{center}
\end{table}

%\green{Table~\ref{tab:randomunconstrained} shows the results for the problems where 
%when we assume we do not know a prior 
%the sparsity of the target inverse covariance matrix is not known in advance, as a result,  
%and therefore we do not impose 
%any linear constraints in $(\PC)$ are not imposed. }
Table~\ref{tab:randomunconstrained} shows the results for problems 
without any linear constraints in $(\PC)$.
Clearly, the DSPG requires less time to compute a lower objective %function
value than the other codes. 
The advantage of the DSPG is greater for the denser 
problems ($\delta=0.9$, which is the case not considered in~\cite{LI10}) or larger
problems ($n=5000$).
Moreover, the dense problems tend to be easier to compute in terms of computational time, although their
recovery can be slightly worse than the problems with $\delta=0.1$,  as indicated by the values of the sensitivity and
specificity. For denser instances, loss$_E$ and loss$_Q$ are improved.

\begin{table}[!htbp]
\footnotesize
\caption{Comparative numerical results for the DSPG, 
IIPM, ASPG and ANS on
constrained randomly generated synthetic data.
$n=$1000, 3000, and 5000, density $\delta=0.1$ and $0.9$.}
\label{tab:randomconstrained}
\begin{center}
\begin{tabular}{|c|c|c|r|r|r||c|r|} \hline
$n$ & $\rho$/\# constraints & method & \multicolumn{1}{c|}{primal obj.} & \multicolumn{1}{c|}{iter.} & time (s) & \multicolumn{2}{c|}{$\delta=0.1$}\\ \hline
& $5/1000$ & DSPG & $-$631.25522377 & 144 & 76.1 & $\lambda_{\min}(\X)$ & 7.70$e-$02 \\ \cline{7-8}
& =0.005 & (gap) & $-$0.00013566 & ~ & ~ & loss$_E$ & 1.7$e-$01 \\ \cline{7-8}
1000 & & IIPM & $-$0.00013004 & 16 & 103.1 & loss$_Q$ & 2.1$e-$02 \\ \cline{7-8}
& 221,990 & ASPG & $+$0.00074651 & 1025 & 635.8 & sensitivity & 0.93 \\ \cline{7-8} 
& & ANS & $+$0.00076506 & 5464 & 3027.2 & specificity & 0.92 \\ \hline \hline
%& $4/2000$ & DSPG & $-$2348.96446339 & 131 & 393.0  &$\lambda_{\min}(\X)$ & 1.24$e-$01 \\ \cline{7-8}
%& =0.002  & (gap) & $-$0.00003179 & ~ & ~ & loss$_E$ & 1.8$e-$01 \\ \cline{7-8}
%2000 & & IIPM & $+$0.00005277 & 15 & 515.7 & loss$_Q$ & 1.5$e-$02 \\ \cline{7-8}
%& 862,392 & ASPG & $+$0.00079559 & 740 & 2618.3 & sensitivity & 0.91 \\ \cline{7-8}
%& & ANS & $+$0.00087272 & 5392 & 16880.8 & specificity & 0.91 \\ \hline \hline
& $3/3000$ & DSPG & $-$4582.28297352 & 126 & 1383.8 & $\lambda_{\min}(\X)$ & 2.41$e-$01 \\ \cline{7-8}
& =0.001 & (gap) & $-$0.00006496 & ~ & ~ & loss$_E$ & 1.6$e-$01 \\ \cline{7-8}
3000 & & IIPM & $-$0.00004689 & 17 & 1692.4 & loss$_Q$ & 1.2$e-$02 \\ \cline{7-8}
& 1,898,796 & ASPG & $+$0.00062951 & 755 & 9658.4 & sensitivity & 0.92 \\ \cline{7-8}
& & ANS & $+$0.00083835 & 5863 & 67170.0 & specificity & 0.88 \\ \hline \hline
%& $3/4000$ & DSPG & $-$6981.86249861 & 92 & 2462.0 & $\lambda_{\min}(\X)$ & 3.97$e-$01  \\ \cline{7-8}
%& = 0.00075 & (gap) & $-$0.00003150 & ~ & ~ & loss$_E$ & 1.7$e-$01 \\ \cline{7-8}
%4000 & & IIPM & $+$0.00001863 & 16 & 3216.3 & loss$_Q$ & 1.1$e-$02 \\ \cline{7-8}
%& 3,316,193 & ASPG & $+$0.00029611 & 732 & 25715.7 & sensitivity & 0.88 \\ \cline{7-8}
%& & ANS & $+$0.00087804 & 5243 & 147335.3 & specificity & 0.89 \\ \hline \hline
& $3/5000$ & DSPG & $-$9489.67203718 & 96 & 5180.8 & $\lambda_{\min}(\X)$ & 4.85$e-$01 \\ \cline{7-8}
& = 0.0006 & (gap) & $-$0.00005274 & ~ & ~ & loss$_E$ & 1.8$e-$01 \\ \cline{7-8}
5000 & & IIPM & $+$0.00001554 & 16 & 6359.0 & loss$_Q$ & 9.7$e-$03\\ \cline{7-8}
& 5,105,915 & ASPG & $+$0.00074531 & 704 & 43955.2 & sensitivity & 0.85 \\ \cline{7-8}
& & ANS & $+$0.00085980 & 5056 & 286746.6 & specificity & 0.89 \\ \hline
%\end{tabular}
%\end{center}
%\end{table}
%
%\begin{table}[!htbp]
%\footnotesize
%\caption{Constrained, $n=1000, 2000, 3000, 4000, 5000$, density $\delta=0.9$.}
%\begin{center}
%\begin{tabular}{|c|c|c|r|r|r||c|r|} \cline{1-6}
$n$ & $\rho$/\# constraints & method & \multicolumn{1}{c|}{primal obj.} & \multicolumn{1}{c|}{iter.} & time (s) & \multicolumn{2}{c|}{$\delta=0.9$}\\ \hline
& $0.1/1000$ & DSPG & $-$3625.96768067 & 42 & 20.7 & $\lambda_{\min}(\X)$ & 3.08$e+$01 \\ \cline{7-8}
& = 0.0001  & (gap) & $-$0.00000072 & ~ & ~ & loss$_E$ & 1.3$e-$01  \\ \cline{7-8}
1000 & & IIPM & $+$0.00014852 & 17 & 65.0 & loss$_Q$ & 1.9$e-$02 \\ \cline{7-8}
& 32,565 & ASPG & $+$0.00079319 & 376 &  547.9 & sensitivity & 0.64 \\ \cline{7-8}
& & ANS & $+$0.00098958 & 1938 & 1102.3 & specificity & 0.69 \\ \hline \hline
%& $0.075/2000$ & DSPG & $-$8235.99794006 & 36 & 102.5 & $\lambda_{\min}(\X)$ & 5.08$e+$01 \\ \cline{7-8}
%& = 0.0000375 & (gap) & $-$0.00000096 & ~ & ~ & loss$_E$ &  1.4$e-$01 \\ \cline{7-8}
%2000 & & IIPM & $+$0.00023059 & 22 & 479.2 & loss$_Q$ & 1.4$e-$02 \\ \cline{7-8}
%& 114,839 & ASPG & $+$0.00017814 & 448 & 2730.0 & sensitivity & 0.68 \\ \cline{7-8}
%& & ANS & $+$0.00099344 & 2969 & 9569.3 & specificity & 0.67 \\ \hline \hline
& $0.07/3000$ & DSPG & $-$13178.75746518 & 35 & 372.6 & $\lambda_{\min}(\X)$ & 6.84$e+$01  \\ \cline{7-8}
& = 0.0000233 & (gap) & $-$0.00000049 & ~ & ~ & loss$_E$ & 1.4$e-$01 \\ \cline{7-8}
3000 & & IIPM & $+$0.00089508 & 24 & 1528.1 & loss$_Q$ & 1.1$e-$02 \\ \cline{7-8}
& 238,977 & ASPG & $+$0.00030434 & 451 & 15295.4 & sensitivity & 0.67 \\ \cline{7-8} 
& & ANS & $+$0.00099513 & 3309 & 38990.8 & specificity & 0.67 \\ \hline \hline
%& $0.07/4000$ & DSPG & $-$18329.78813928 & 33 & 866.2 & $\lambda_{\min}(\X)$ & 8.51$e+$01  \\ \cline{7-8}
%& = 0.0000175 & (gap) & $-$0.00000115 & ~ & ~ & loss$_E$ & 1.3$e-$01 \\ \cline{7-8}
%4000 & & IIPM & $+$0.00115968 & 26 & 3545.9 & loss$_Q$ & 9.2$e-$03 \\ \cline{7-8}
% & 403,355 & ASPG & --- & --- & --- & ~ & ~ \\ \cline{7-8}
%& & ANS --- & --- & --- & --- & ~ & ~  \\ \hline\hline
& $0.07/5000$ & DSPG & $-$23644.31706813 & 29 & 1543.3 & $\lambda_{\min}(\X)$ & 1.01$e+$02 \\ \cline{7-8}
& = 0.000014 & (gap) & $-$0.00000833 & ~ & ~ & loss$_E$ & 1.2$e-$01 \\ \cline{7-8}
5000 & & IIPM & $+$0.00101943 & 28 & 7247.3 & loss$_Q$ & 7.9$e-$03 \\ \cline{7-8}
& 604,592 & ASPG & $+$0.00034229 & 344 & 30880.9 & sensitivity & 0.64-0.65 \\ \cline{7-8}
& & ANS & $+$0.00098642 & 3272 & 188957.0 & specificity & 0.68-0.69 \\ \hline
\end{tabular}
\end{center}
\end{table}

For the problems tested in Table~\ref{tab:randomconstrained}, the sparsity 
%Similar tendency  can be observed from Table~\ref{tab:randomconstrained}
of $\bold{\Sigma}^{-1}\in\SMAT^n$ is imposed as
linear constraints in $(\PC)$ as $X_{ij}=0$ for $(i,j)\in\Omega$, where
$|\Omega|\equiv$``\# constraints''  in the table. 
%In this case, 
From the results in Table~\ref{tab:randomconstrained}, we observe that the ASPG and ANS require much more 
computational time than in the 
unconstrained case.
The IIPM is the only code which violates the linear constraints 
$X_{ij}=0$ for $(i,j)\in\Omega$, resulting in values
less than $6.01\times 10^{-9}$ for $\max_{i,j=1,\ldots,n}|X_{ij}|$ at the final iteration.
We also see that loss$_E$ and loss$_Q$ do not change
when density $\delta$ is changed.

\subsection{Deterministic synthetic data}
\label{subsec:deterministic}

The numerical results on eight problems where
%The second set consists of eight problems where 
$\A\in\SMAT^n$
has a special structure such as diagonal band, fully dense,
or arrow-shaped {~\cite{LI10} are shown in Tables~\ref{tab:deterministicunconstrained} 
and~\ref{tab:deterministicconstrained}.
For each $\A$, a sample covariance matrix $\C\in\SMAT^n$ is computed
from $2n $ i.i.d.~random vectors selected from the
$n$-dimensional Gaussian distribution $\NC(\0,\A^{-1})$.
Finally, we randomly select 50\% of the zero entries for each
$\A$ to be the linear constraints in $(\PC)$, excepting for the
Full problem in Table~\ref{tab:deterministicunconstrained}.

Similar observation to Section \ref{subsec:random} can be made for the results presented in Tables~\ref{tab:deterministicunconstrained} and~\ref{tab:deterministicconstrained}.
The DSPG took less computational time  than the other methods in most cases and obtained slightly worse objective function values.

\begin{table}[!htbp]
\footnotesize
\caption{Comparative numerical results for the DSPG, IIPM, ASPG and ANS on
unconstrained deterministic synthetic data. $n=$2000.}
\label{tab:deterministicunconstrained}
\begin{center}
\begin{tabular}{|c|c|c|r|r|r||c|c|} \cline{1-6}
problem & $\rho$ & method & \multicolumn{1}{c|}{primal obj.} & \multicolumn{1}{c|}{iter.} & \multicolumn{1}{c|}{time (s)} & \multicolumn{2}{c}{}\\ \hline
&  & DSPG & 2189.07471338 & 20 & 57.8 & $\lambda_{\min}(\X)$ & 8.42$e-$01 \\ \cline{7-8}
& & (gap) & $-$0.33302912  & ~ & ~ & loss$_E$ & 7.9$e-$03 \\ \cline{7-8}
Full & 0.1 & IIPM & $-$0.33297893 & 11 & 185.9 & loss$_Q$ & 2.1$e-$03 \\ \cline{7-8}
& & ASPG & $-$0.33297903 & 54 & 244.1 & \multicolumn{2}{c}{} \\ 
& & ANS & $-$0.33283013 & 40 & 150.5 & \multicolumn{2}{c}{} \\ \cline{1-6}
\end{tabular}
\end{center}
\end{table}

\begin{table}[!htbp]
\footnotesize
\caption{Comparative numerical results for the DSPG, IIPM, ASPG and ANS on
constrained deterministic synthetic data. $n=$2000.}
\label{tab:deterministicconstrained}
\begin{center}
\begin{tabular}{|c|c|c|r|r|r||c|c|} \cline{1-6}
problem & $\rho$/\# constraints & method & \multicolumn{1}{c|}{primal obj.} & \multicolumn{1}{c|}{iter.} & \multicolumn{1}{c|}{time (s)} & \multicolumn{2}{c}{}\\ \hline
& 0.1 & DSPG & 3707.57716442 & 2001 & 6060.5 & $\lambda_{\min}(\X)$ & 1.00-1.25$e-$06 \\ \cline{7-8}
& & (gap) & $-$0.32561268 & ~ & ~ & loss$_E$ & 3.1$e-$02 \\ \cline{7-8}
ar1 & & IIPM & $-$0.32526710 & 38 & 3577.3 & loss$_Q$ & 2.3$e-$01 \\ \cline{7-8}
& 998,501 & ASPG & $-$0.32474270 &  19034 & 69534.6 & sensitivity & 1.00 \\ \cline{7-8}
& & ANS & $-$0.32448637 & 29347 & 88733.8 & specificity & 1.00 \\ \hline \hline
& 0.1 & DSPG & 3029.94934978 & 55 & 167.6 & $\lambda_{\min}(\X)$ & 2.73$e-$01 \\ \cline{7-8}
& & (gap) & $-$0.00329417 & ~ & ~ & loss$_E$ & 4.4$e-$02 \\ \cline{7-8}
ar2 & & IIPM & $-$0.00291044 & 11 & 290.1 & loss$_Q$ & 5.8$e-$03 \\ \cline{7-8}
& 997,502 & ASPG & $-$0.00309541 & 196 & 821.2 & sensitivity & 1.00 \\ \cline{7-8}
& & ANS & $-$0.00241116 & 1241 & 4230.7 & specificity & 1.00 \\ \hline \hline
& 0.03 & DSPG & 2552.71613399 & 78 & 236.8 & $\lambda_{\min}(\X)$ & 1.70$e-$01 \\ \cline{7-8}
& & (gap) & $-$0.00553547  & ~ & ~ & loss$_E$ & 1.8$e-$02 \\ \cline{7-8}
ar3 & & IIPM & $-$0.00545466 & 14 & 433.2 & loss$_Q$ & 4.4$e-$03 \\ \cline{7-8}
& 996,503 & ASPG & $-$0.00480321 & 353 & 1242.4 & sensitivity & 1.00 \\ \cline{7-8}
& & ANS & $-$0.00468946 & 2712 & 8592.6 & specificity & 1.00 \\ \hline \hline 
& 0.01 & DSPG & 2340.10866746 & 73 & 222.7 & $\lambda_{\min}(\X)$ & 2.31$e-$01 \\ \cline{7-8}
& & (gap) & $-$0.00050381  & ~ & ~ & loss$_E$ & 5.6$e-$02 \\ \cline{7-8}
ar4 & & IIPM & $-$0.00048223 & 14 & 403.3 & loss$_Q$ & 8.4$e-$03 \\ \cline{7-8}
& 995,505 & ASPG & $+$0.00030934 & 1095 & 3975.8 & sensitivity & 1.00 \\ \cline{7-8}
& & ANS & $+$0.00044155 & 5996 & 19379.6 & specificity & 1.00 \\ \hline \hline
& 0.1 & DSPG & 2253.67375651 & 14 &  44.4 & $\lambda_{\min}(\X)$ & 7.70$e-$01 \\ \cline{7-8}
& & (gap) & $-$0.00114736  & ~ & ~ & loss$_E$ & 1.5$e-$02 \\ \cline{7-8}
Decay & & IIPM & $-$0.00094913 & 10 & 170.5 & loss$_Q$ & 3.6$e-$03 \\ \cline{7-8}
& 981,586 & ASPG & $-$0.00106549 & 12 & 69.9 & sensitivity & 0.00 \\ \cline{7-8}
& & ANS & $-$0.00089883 & 32 & 126.6 & specificity & 1.00 \\ \hline \hline
& 0.1 & DSPG & 2204.50539735  & 82 & 248.7 & $\lambda_{\min}(\X)$ & 2.50-2.51$e-$07 \\ \cline{7-8}
& & (gap) & $-$0.00018704  & ~ & ~ & loss$_E$ & 4.8$e-$03 \\ \cline{7-8}
Star & & IIPM & $-$0.00001083 & 11 & 179.6 & loss$_Q$ & 4.5$e-$01 \\ \cline{7-8}
& 997,501& ASPG & $-$0.00002462 & 31 & 159.0 & sensitivity & 0.33 \\ \cline{7-8}
& & ANS & $-$0.00017677 & 92 & 311.4 & specificity & 1.00 \\ \hline \hline
& 0.05 & DSPG & 3519.14112855 & 1094 & 3307.0 & $\lambda_{\min}(\X)$ & 1.24-1.61$e-$06 \\ \cline{7-8}
& & (gap) & $-$0.07034481  & ~ & ~ & loss$_E$ & 2.9$e-$02 \\ \cline{7-8}
Circle & & IIPM & $-$0.07032168 & 28 & 1976.8 & loss$_Q$ & 2.6$e-$01 \\ \cline{7-8}
& 998,500 & ASPG & $-$0.06948986 & 11557 & 42437.1 & sensitivity & 1.00 \\ \cline{7-8}
& & ANS & $-$0.06946870 & 19714 & 59672.3 & specificity & 1.00 \\ \hline
\end{tabular}
\end{center}
\end{table}

%\modify{The IIPM is the only code which violates the linear constraints 
%$X_{ij}=0$ for $(i,j)\in\Omega$, resulting in values
%less than $6.93\times 10^{-9}$ for $\max_{i,j=1,\ldots,n}|X_{ij}|$ at the final iteration.}{(Yamashita memo: Maybe, this sentence is same as the last sentence in the %previous subsection. So, we have to remove it.)}

\subsection{Gene expression data}
\label{subsec:realdata}

Five problems from the gene expression data~\cite{LI10} were tested for performance comparison.
Since it was assumed that  %assumed that we do not know 
the conditional independence of
their gene expressions is not known, linear constraints were not imposed in (${\mathcal P}$).
In this experiment,  we additionally compared the performance of the DSPG with 
QUIC~\cite{HSIEH14} which is known to be fast for sparse problems.

Figures~\ref{fig:lymph-er}-\ref{fig:hereditarybc} show the 
computational time (left axis) for each problem when 
$\rho$ is changed. As  $\rho$ grows larger, the final solution $\X^k$ (of the DSPG) becomes %should be
sparser, as shown in the right axis for the number of nonzero elements of $\X^k$.  %    we can observe from the solid line measured (?) from the right axis. 

\begin{figure}[!htbp]
\begin{center}
\includegraphics[scale=0.5]{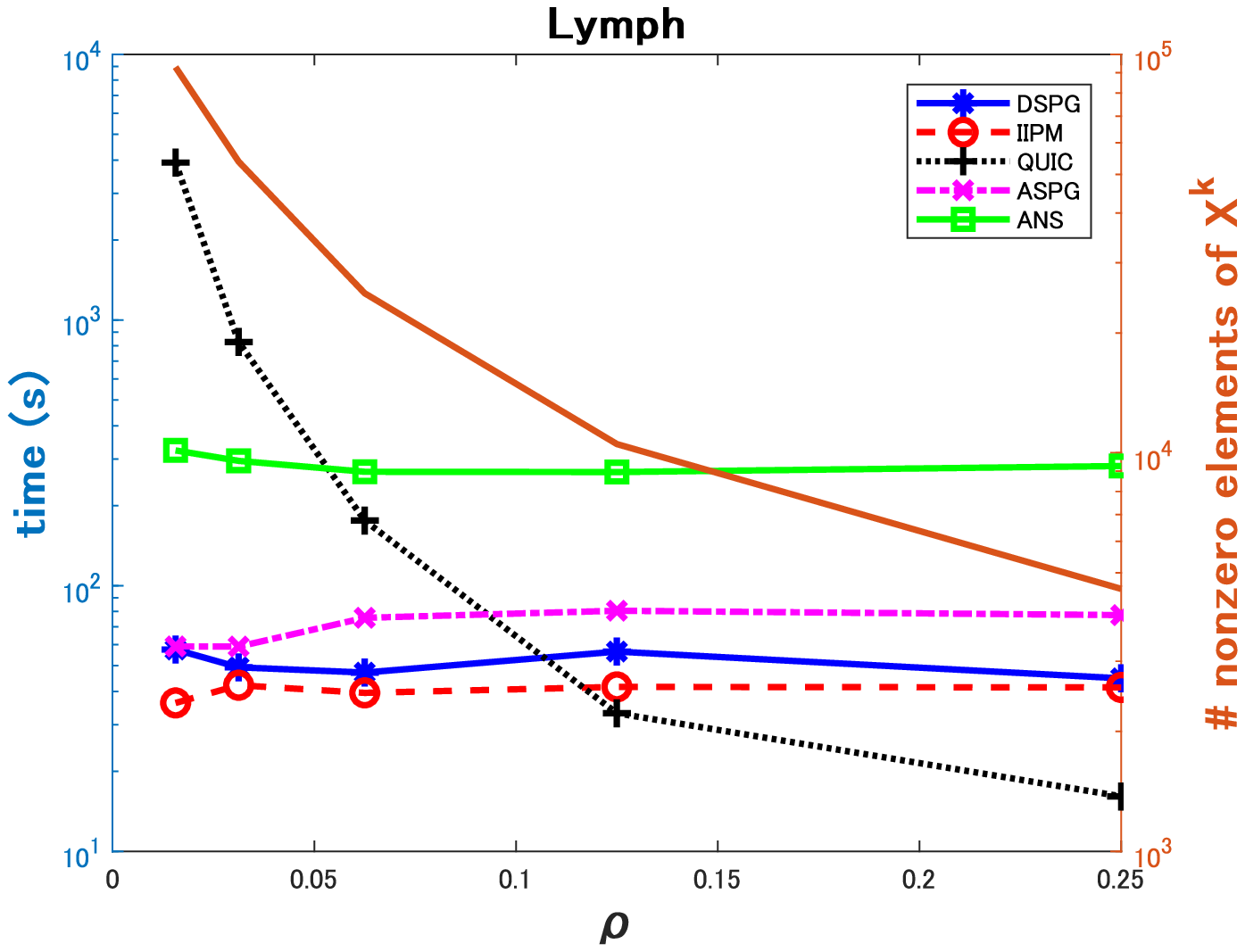}
\includegraphics[scale=0.5]{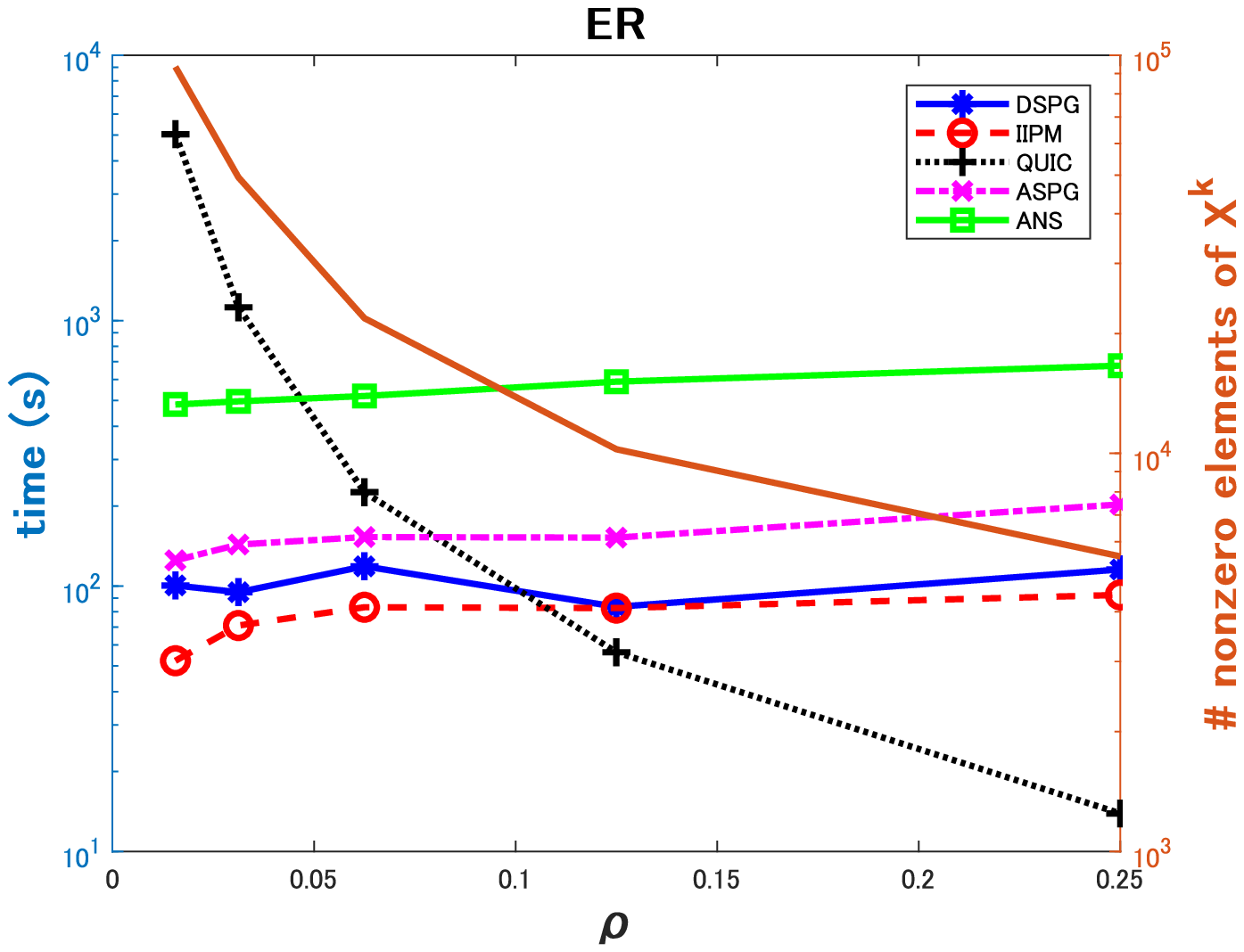}
\end{center}
\caption{Computational time (the left axis) for the DSPG, IIPM, QUIC, ASPG, ANS on the
problems ``Lymph" ($n=587$) and ``ER" ($n=692$) when $\rho$ is changed; the number of nonzero
elements of $\X^k$ for the final iterate of the DSPG (the right axis).}
\label{fig:lymph-er}
\end{figure}

\begin{figure}[!htbp]
\begin{center}
\includegraphics[scale=0.5]{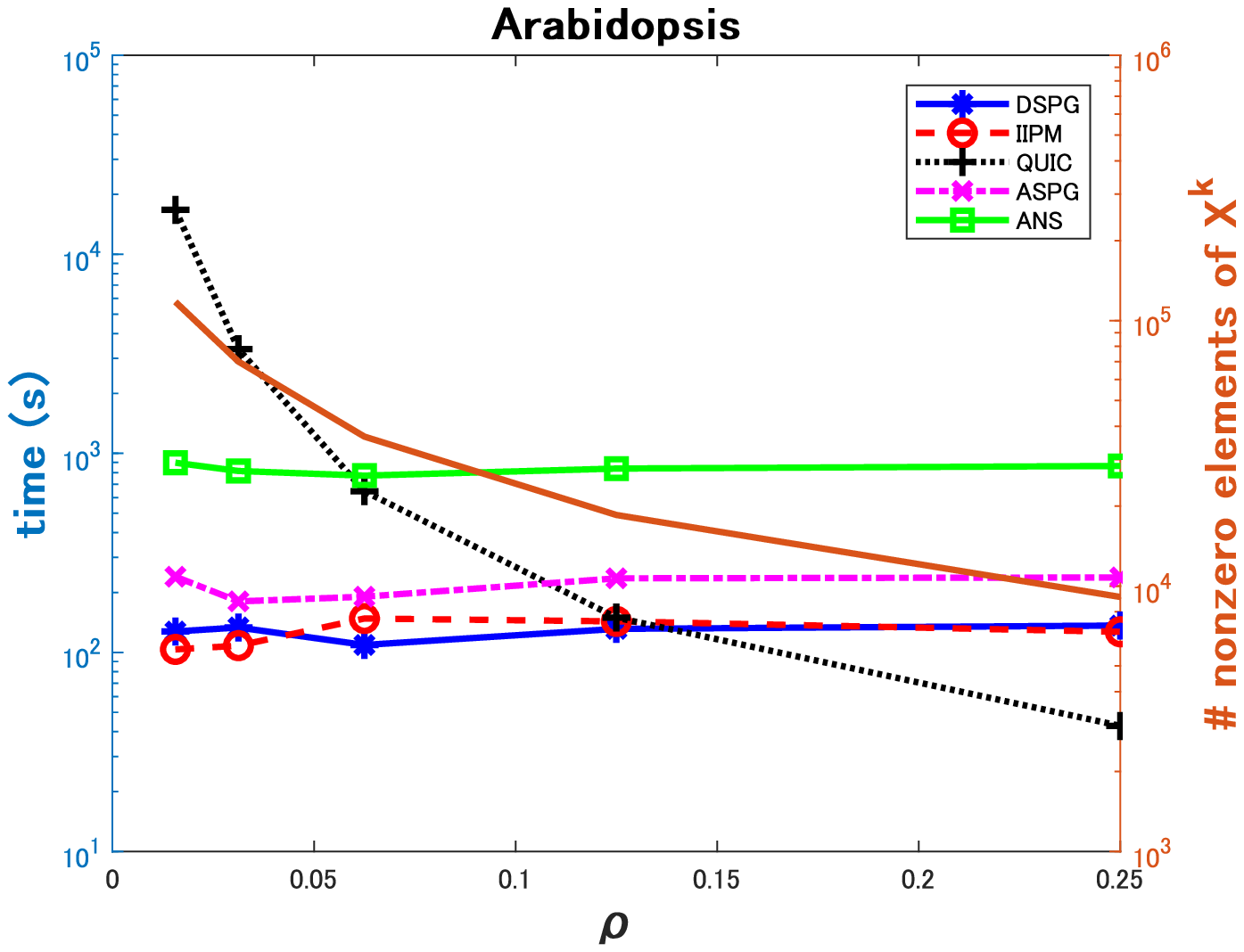}
\includegraphics[scale=0.5]{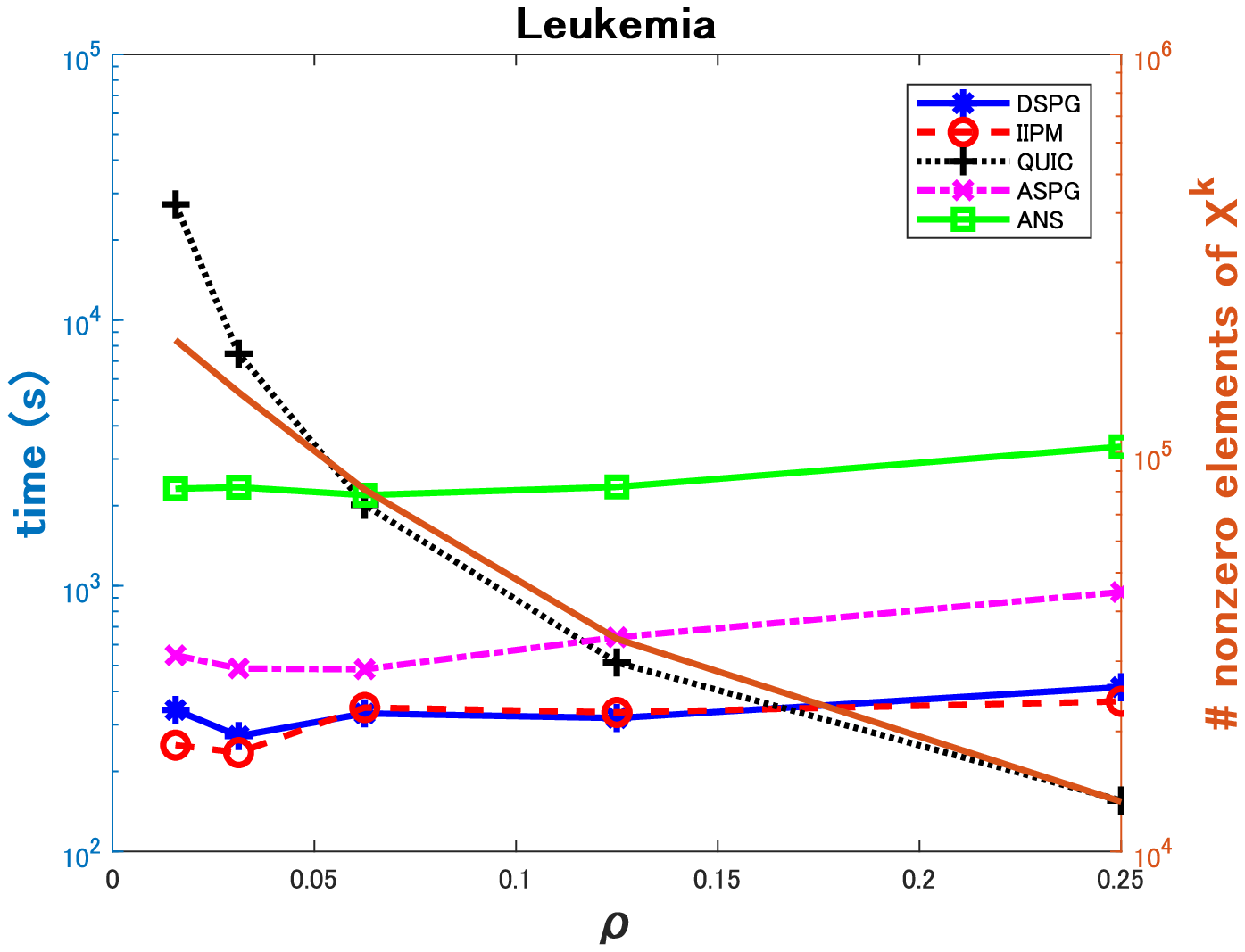}
\end{center}
\caption{Computational time (the left axis) for the DSPG, IIPM, QUIC, ASPG, ANS on
problems ``Arabidopsis'' ($n=834$) and ``Leukemia'' ($n=1255$) when 
$\rho$ is changed; the number of nonzero
elements of $\X^k$ for the final iteration of the DSPG (the right axis).}
\label{fig:arabidopsis-leukemia-er}
\end{figure}

\begin{figure}[!htbp]
\begin{center}
\includegraphics[scale=0.5]{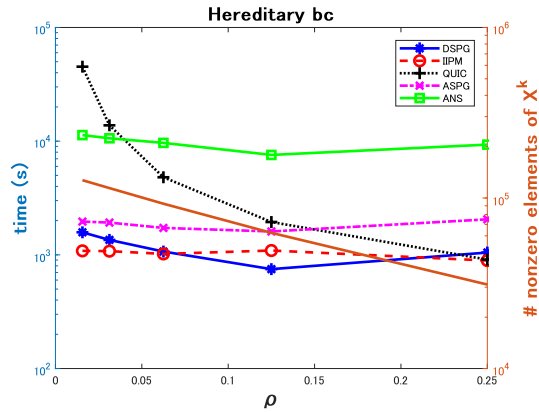}
\end{center}
\caption{Computational time (the left axis) for the DSPG, IIPM, QUIC, ASPG, ANS on the
problem ``Hereditary bc'' ($n=1869$) when $\rho$ is changed; the number of nonzero
elements of $\X^k$ for the final iteration of the DSPG (the right axis).}
\label{fig:hereditarybc}
\end{figure}

We see that the DSPG (solid blue line) is as 
competitive with the IIPM (dashed red line)
and even faster than the QUIC (dotted black line), which is known for their fast convergence, when $\rho$ is small.
The performance of the QUIC is closely related to the sparsity of the final
iterate of $\X^k$ for the DSPG (solid brown line) as expected. Here we used
the threshold $|\X^k|_{ij} \geq 0.05$ to determine nonzero elements.

\section{Conclusion}\label{sec:conclusion}

We have proposed a dual-type spectral projected gradient method for $(\PC)$ to efficiently handle large-scale problems.
Based on the theoretical convergence results of the proposed method, the Dual SPG algorithm has been implemented and
the numerical results on randomly generated synthetic data, deterministic synthetic data and gene expression data are reported. %\modify{ in Section 4}{}.
We have demonstrated the efficiency in computational time to obtain a better optimal value for $(\PC)$. In particular,
when $\rho$ is small, we have observed that the performance of the proposed method increases.

To further improve  the performance of the  Dual SPG method,  our future research includes reducing the computational
time by employing an approach similar to Dahl~\textit{et al.}~\cite{ZHANG18} and/or exploiting the structured sparsity
as discussed in \cite{SPARSECOLO}.

\bibliographystyle{plain} %nat-reversed} %elsarticle-harv}
\begin{small}

%\bibliography{./ref}

%\input{appendix.tex}
\end{small}
\end{document}